  \newcommand{\ie}{\textit{i.e.}}%
  \newcommand{\nff}{\emph{n.f.f.}}
  \newcommand{\nc}{noncommutative}
  \newcommand{\MM}{\ensuremath{\mathcal{M}}}%
  \newcommand{\NN}{\ensuremath{\mathcal{N}}}%
  \newcommand{\dist}{\ensuremath{{\rm dist}}}%
  \newcommand{\C}{\ensuremath{\mathbb{C}}}%
  \newcommand{\R}{\ensuremath{\mathbb{R}}}%
  \newcommand{\N}{\ensuremath{\mathbb{N}}}%
  \newcommand\M[2]{\ensuremath{M_{#1}\left(#2\right)}}
  \newcommand\Mn{\ensuremath{M_n}}
  \newcommand\Mm{\ensuremath{M_m}}
  \newcommand{\paren}[1]{\left( #1 \right)}
  \newcommand\id{\ensuremath{\mathrm{id}}}
  \newcommand{\egdef}{\ensuremath{\stackrel{\text{\tiny def}}=}}%
  \newcommand\B[1]{\ensuremath{B\left(#1\right)}}
  \newcommand\tr{\ensuremath{\mathrm{tr}}}%
  \newcommand\Tr{\ensuremath{\mathrm{Tr}}}%
  \newcommand{\un}{\ensuremath{{1}}}%
\newtheorem{thm}{Theorem}[section]
\newtheorem{lemma}[thm]{Lemma}
\newtheorem{corollaire}[thm]{Corollary}
\newtheorem{proposition}[thm]{Proposition}
\theoremstyle{remark}
\newtheorem{rem}[thm]{Remark}
\theoremstyle{definition}
\newtheorem{dfn}{Definition}[section]
\newtheorem{dfn-prop}{Definition-Proposition}
\begin{document}
\author{Mikael de la Salle}
\title{Complete isometries between subspaces of noncommutative $L_p$-spaces}
\address{ {\'E}quipe d'Analyse Fonctionnelle \\ Institut de
  Math{\'e}matiques de Jussieu \\ Universit{\'e} Paris 6}
\address{D{\'e}partement de math{\'e}matiques et applications \\ {\'E}cole
  Normale Sup{\'e}rieure\\45 rue d'Ulm\\ 75230 Paris Cedex 05\\ France}
\email{mikael.de.la.salle@ens.fr} 
\keywords{Non commutative probability, Complete isometries between non
  commutative $L \sb p$ spaces, von Neumann Algebra}

\subjclass[2000]{46L51, 46L52, 46L07}

\begin{abstract}
  We prove some noncommutative analogues of a theorem by Plotkin and Rudin
  about isometries between subspaces of $L_p$-spaces.

  Let $0<p<\infty$, $p$ not an even integer. The main result of this paper
  states that in the category of unital subspaces of noncommutative
  probability $L_p$-spaces, under some boundedness condition, the unital
  completely isometric maps come from $*$-isomorphisms of the underlying
  von Neumann algebras.

  Some applications are given, including to non commutative $H^p$
  spaces.
\end{abstract}
\maketitle

\section*{Introduction}
The study of isometries between Banach spaces has been an active area
of research in the theory of Banach spaces for a long time, see for
example the survey \cite{MR1957004}. The isometries between $L_p$
spaces were first described by Banach, with a final proof given by
Lamperti. The study of isometries between subspaces of $L_p$-spaces,
goes back at least to the $1960$'s with Forelli's work
\cite{MR0169081}, but the most general result is due independently to
Plotkin in a series of articles in the $1970'$s \cite{MR0621703} and
to Rudin in \cite{MR0410355}; see also Hardin \cite{MR611233}. The
reader is refered the \cite[Chapter 2]{MR1863709} for a survey. 

The study of isometries between whole \nc{} $L_p$ spaces has
already interested a few mathematicians, and the final characterization
(in the tracial case) was given by Yeadon in \cite{MR611284}. Recent
results were also obtained for non semifinite von Neumann algebras
(\cite{MR2215435} and \cite{MR2255812}). The study of complete
isometries between \nc{} $L_p$ has also been more recently
studied by Junge, Ruan and Sherman in \cite{MR2255812}.

In this paper we will be interested in the study of complete
isometries between subspaces of \nc{} $L_p$ spaces, and the
main results are close analogues of the result for isometries in
subspaces of classical $L_p$ spaces.

We first recall Plotkin's and Rudin's theorem:
\begin{thm}[Plotkin, Rudin]
\label{thm=theoreme_1_rudin}
Let $0<p<\infty$ and $p \neq 2,4,6,8,\dots$. Let $\mu$ and $\nu$ be two
probability measures (on arbitrary measure spaces $\Omega$ and
$\Omega'$). Let finally $n$ be a positive integer and $f_1, \dots f_n
\in L_p(\mu)$, $g_1, \dots g_n \in L_p(\nu)$.

Assume that for all complex numbers $z_1, \dots z_n \in \C$, 
\begin{equation}
\label{eq=egalite_des_normes_p_cas_commutatif}
\int \left |1 + z_1 f_1 + \dots z_n f_n\right|^p d \mu = 
\int \left |1 + z_1 g_1 + \dots z_n g_n\right|^p d \nu.
\end{equation}

Then $(f_1 ,\dots f_n)$ and $(g_1 ,\dots g_n)$ form two equimeasurable
families. Probabilistically, this means that the $\R^n$-valued random
variables $(f_1 ,\dots f_n)$ and $(g_1 ,\dots g_n)$ have the same distribution.
\end{thm}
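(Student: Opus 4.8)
The plan is to reduce the statement, following Rudin, to the classical fact that the Fourier transform of $\zeta\mapsto|\zeta|^p$ does not vanish away from the origin, which is exactly where the hypothesis $p\notin\{2,4,6,\dots\}$ enters. Let $P$ and $Q$ be the joint laws of $(f_1,\dots,f_n)$ under $\mu$ and of $(g_1,\dots,g_n)$ under $\nu$; these are Borel probability measures on $\C^n\cong\R^{2n}$ (on $\R^n$ if the functions are real-valued), and the assertion to be proved is precisely $P=Q$. Writing $\scal{z}{w}=\sum_jz_jw_j$, condition \eqref{eq=egalite_des_normes_p_cas_commutatif} reads $\int|1+\scal{z}{w}|^p\,dP(w)=\int|1+\scal{z}{w}|^p\,dQ(w)$ for all $z\in\C^n$. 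Replacing $z$ by $sz$ with $s\in\C\setminus\{0\}$ and using $|1+s\scal{z}{w}|^p=|s|^p\,|\scal{z}{w}-(-1/s)|^p$, one gets $\int|\scal{z}{w}-a|^p\,dP(w)=\int|\scal{z}{w}-a|^p\,dQ(w)$ first for $a\in\C\setminus\{0\}$ and then for $a=0$ as well, by dominated convergence (the integrands are bounded by the $P$- and $Q$-integrable function $(1+|\scal{z}{w}|)^p$, since $\sum_jz_jf_j\in L_p(\mu)$ and $\sum_jz_jg_j\in L_p(\nu)$). Now fix $z$ and let $\lambda_z$ and $\sigma_z$ be the images of $P$ and $Q$ under $w\mapsto\scal{z}{w}$: these are probability measures on $\C$ of finite $p$-th moment, and the identity above becomes $\int_{\C}|\zeta-a|^p\,d\lambda_z(\zeta)=\int_{\C}|\zeta-a|^p\,d\sigma_z(\zeta)$ for every $a\in\C$.

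The heart of the argument is to conclude $\lambda_z=\sigma_z$ from this. Set $\rho=\lambda_z-\sigma_z$, a finite signed measure on $\R^2$ of total mass $0$, of finite $p$-th moment, and with $\int|\zeta-a|^p\,d\rho(\zeta)=0$ for all $a$. The left-hand side is the value at $a$ of the convolution $\rho\ast\Psi_p$, where $\Psi_p\colon\zeta\mapsto|\zeta|^p$ is viewed as a tempered distribution on $\R^2$; hence $\rho\ast\Psi_p\equiv0$, and taking Fourier transforms $\widehat\rho\,\widehat{\Psi_p}=0$. By the classical computation of the Fourier transform of $|\zeta|^p$ on $\R^d$ one has $\widehat{\Psi_p}=c_p\,|\xi|^{-p-d}$ (here $d=2$), where $c_p$ is an explicit ratio of $\Gamma$-factors, and $c_p\neq0$ precisely because $p$ is not a nonnegative even integer---for such $p$, $|\zeta|^p$ is a polynomial and $\widehat{\Psi_p}$ is supported at $\{0\}$; this is the one and only place the arithmetic condition on $p$ is used. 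Thus $\widehat{\Psi_p}$ is a nowhere-vanishing smooth function on $\R^2\setminus\{0\}$, and since $\widehat\rho$ is continuous the relation $\widehat\rho\,\widehat{\Psi_p}=0$ forces $\widehat\rho\equiv0$ on $\R^2\setminus\{0\}$; together with $\widehat\rho(0)=\rho(\R^2)=0$ this gives $\widehat\rho\equiv0$, that is, $\lambda_z=\sigma_z$. This is where the real work lies: one must make rigorous the convolution and the product of tempered distributions and the implication $\rho\ast\Psi_p\equiv0\Rightarrow\widehat\rho\,\widehat{\Psi_p}=0$---cleanly done by pairing $\rho\ast\Psi_p$ with Schwartz functions whose Fourier transform is supported away from the origin---and one must have the non-vanishing of $\widehat{|\zeta|^p}$ off the origin at one's disposal.

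Finally, $\lambda_z=\sigma_z$ for every $z\in\C^n$. As $z$ ranges over $\C^n$ the functions $w\mapsto\mathrm{Re}\,\scal{z}{w}$ exhaust all $\R$-linear functionals on $\R^{2n}$, and by the previous step $P$ and $Q$ induce the same distribution under each of them; so the Cram\'er--Wold theorem yields that the characteristic functions of $P$ and $Q$ coincide, whence $P=Q$, which is the asserted equimeasurability. (When the $f_j$ and $g_j$ are real-valued, one keeps $z\in\R^n$ and $s,a\in\R$ throughout and applies Cram\'er--Wold on $\R^n$.)
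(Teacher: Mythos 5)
This statement is Plotkin--Rudin's classical theorem, which the paper only \emph{recalls} (with references to Plotkin, Rudin and Hardin) and never proves; so there is no in-paper proof to compare against, and your argument has to be judged on its own and against the classical ones. Your proof is correct in outline, and it follows a genuinely different route from Rudin's original argument: Rudin expands $|1+w|^p=\sum_{j,k}\binom{p/2}{j}\binom{p/2}{k}w^j\bar w^k$ and extracts joint moments, the whole point being that the generalized binomial coefficients $\binom{p/2}{k}$ do not vanish when $p\notin 2\N$ --- and it is precisely this power-series/moment-extraction scheme that the paper transplants to the noncommutative setting (Lemmas \ref{thm=lemme_combinatoire}, \ref{thm=egalite_entre_normeP_et_trace_avec_adjoint} and \ref{thm=lemme_coeff_binomial_non_nul}). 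You instead reduce to one complex linear combination at a time, recognize $a\mapsto\int|\zeta-a|^p\,d\rho(\zeta)$ as a convolution against $|\zeta|^p$, and use that the distributional Fourier transform of $|\zeta|^p$ on $\R^2\setminus\{0\}$ is $c_p|\xi|^{-p-2}$ with $c_p$ proportional to $1/\Gamma(-p/2)$, nonzero exactly when $p\notin 2\N$; Cram\'er--Wold then assembles the joint law. The two approaches hinge on the same arithmetic phenomenon (poles of $\Gamma(-p/2)$ versus vanishing of $\binom{p/2}{k}$ for integers $k>p/2$), but yours is shorter and conceptually cleaner for the commutative statement, at the price of distribution-theoretic machinery that has no noncommutative counterpart --- which is presumably why the paper's Section 2 follows Rudin's combinatorial route instead. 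Three points in your write-up deserve the care you partly flag: (i) $\widehat{\Psi_p}$ equals $c_p|\xi|^{-p-2}$ only \emph{away from the origin} (the tempered distribution may carry a piece at $0$), so the identity $\widehat\rho\,\widehat{\Psi_p}=0$ should only ever be invoked on $\R^2\setminus\{0\}$, exactly as your test-function pairing does; (ii) the Fubini justification of that pairing uses the finite $p$-th moment of $\lambda_z$ and $\sigma_z$, which you have; (iii) $c_p\neq 0$ for $p>0$, $p\notin 2\N$ should be stated explicitly as coming from $\Gamma((p+2)/2)\neq 0$ and $1/\Gamma(-p/2)\neq 0$ off the poles. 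None of these is a gap, only detail to be written out.
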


The following theorem was also proved by Rudin in his paper
\cite{MR0410355}. It had previously been proved in weaker forms by
Forelli (\cite{MR0169081} and \cite{MR0318897}).
\begin{thm}[Rudin]
\label{thm=theoreme_2_rudin}
Let $\mu$ and $\nu$ be as above, and $0<p< \infty$, $p \neq 2$. Let
$M \subset L_p(\mu)$ be a (complex) unital algebra (with respect to
the point-wise product), and $A:M
\rightarrow L_p(\nu)$ a unital linear isometry: $A(1)=1$ and
\[ \int |f|^p d\mu = \int |A(f)|^p d\nu \ \ \ \ \ \ \ \ \forall f \in M.\]

\begin{itemize}
\item Then for all $f,g \in M$:
\[A(f g) = A(f)A(g) \ \ \ \ \ \ \ \ \forall f,g \in M\]
and
\[\|A(f)\|_\infty = \|f\|_\infty.\]
\item If moreover $M \subset L_\infty$ or $p \neq 4,6,8,\dots$, then
  for all $n$ and $f_1, \dots f_n \in M$, $(f_1 ,\dots f_n)$ and $(A
  f_1 ,\dots A f_n)$ are equimeasurable.
\end{itemize}
\end{thm}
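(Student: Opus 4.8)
The plan is to derive everything from Theorem~\ref{thm=theoreme_1_rudin} whenever $p$ is not an even integer, and to treat the remaining exponents $p\in\{4,6,8,\dots\}$ --- where that theorem is genuinely false --- by an explicit computation with moments. I expect this last part to be the real work; everything else is essentially bookkeeping on top of Theorem~\ref{thm=theoreme_1_rudin}.

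\emph{Multiplicativity, and the sup-norm.} Fix $f,g\in M$. Since $M$ is a unital algebra and $A$ a unital linear isometry, for every $z\in\C^3$ the element $1+z_1f+z_2g+z_3(fg)$ lies in $M$ and $A$ sends it to $1+z_1A(f)+z_2A(g)+z_3A(fg)$, with equal $L_p$-norm. If $p$ is not an even integer, Theorem~\ref{thm=theoreme_1_rudin} applies to the triples $(f,g,fg)$ and $(A(f),A(g),A(fg))$ and gives that they are equimeasurable; pushing forward by the continuous map $(x,y,z)\mapsto z-xy$ and using $fg-fg=0$ $\mu$-a.e.\ yields $A(fg)-A(f)A(g)=0$ $\nu$-a.e. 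If instead $p=2k$ with $k\ge2$, I would expand, for $h_1,\dots,h_r\in M$,
\[
  \int\Bigl|1+\sum_i z_ih_i\Bigr|^{2k}\,d\mu=\int\Bigl(1+\sum_i z_ih_i\Bigr)^{k}\,\overline{\Bigl(1+\sum_i z_ih_i\Bigr)^{k}}\,d\mu
\]
as a polynomial in $z,\bar z$ (multi-index notation $h^\alpha=\prod_i h_i^{\alpha_i}$): the coefficient of $z^\alpha\bar z^\beta$ with $|\alpha|,|\beta|\le k$ is a strictly positive constant, depending only on $\alpha,\beta,k$, times $\int h^\alpha\,\overline{h^\beta}\,d\mu$. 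Comparing coefficients with the same expansion on the $\nu$-side shows that all mixed moments $\int h^\alpha\overline{h^\beta}\,d\mu$ with $|\alpha|,|\beta|\le k$ are unchanged when each $h_i$ is replaced by $A(h_i)$. Applying this with $(h_1,h_2,h_3)=(f,g,fg)$, each of the four moments occurring in $0=\int|h_3-h_1h_2|^2\,d\mu$ has both partial degrees $\le 2\le k$, so it equals the matching $\nu$-moment; reassembling yields $\int|A(fg)-A(f)A(g)|^2\,d\nu=0$, hence $A(fg)=A(f)A(g)$ again. Thus multiplicativity holds for all $p\ne2$. From it, $A(h^m)=A(h)^m$ for $h\in M$, $m\ge1$, whence $\|h\|_{mp}=\|h^m\|_p^{1/m}=\|A(h)^m\|_p^{1/m}=\|A(h)\|_{mp}$; letting $m\to\infty$ and using that $\mu,\nu$ are probability measures gives $\|h\|_\infty=\|A(h)\|_\infty$, finite or not.

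\emph{Equimeasurability.} If $p$ is not an even integer, this is immediate from Theorem~\ref{thm=theoreme_1_rudin} applied to $(f_1,\dots,f_n)$ and $(A(f_1),\dots,A(f_n))$, since identity~\eqref{eq=egalite_des_normes_p_cas_commutatif} holds with $g_i=A(f_i)$ (again because $1+\sum z_if_i\in M$ and $A$ is a unital linear isometry). Now suppose $M\subset L_\infty$ and $p=2k$ is an even integer; then each $A(f_i)$ is bounded by the sup-norm identity above. For $h=1+\sum_iz_if_i\in M$ the preceding step gives $\int|h|^{2mk}\,d\mu=\int|A(h)|^{2mk}\,d\nu$ for every $m\ge1$; expanding both sides as before, now with partial degrees going up to $mk$ (arbitrarily large as $m\to\infty$) and matching coefficients, I obtain that \emph{every} mixed moment $\int f^\alpha\overline{f^\beta}\,d\mu$ coincides with the corresponding moment of $(A(f_1),\dots,A(f_n))$. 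Since these are bounded $\C^n$-valued random variables and a compactly supported measure on $\R^{2n}$ is determined by its moments, $(f_1,\dots,f_n)$ and $(A(f_1),\dots,A(f_n))$ are equimeasurable.

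\emph{Where the difficulty lies.} The delicate range is $p\in\{4,6,8,\dots\}$, where Theorem~\ref{thm=theoreme_1_rudin} must be replaced by the moment comparison above. For multiplicativity one is rescued by the fact that $0=\|fg-fg\|_2^2$ only involves moments of bidegree $\le(2,2)$, which are visible as soon as $k\ge2$. For equimeasurability the point is that $\|\cdot\|_{mp}$ only probes even-integer exponents when $p$ is even, so the joint law cannot be recovered by soft arguments; this is exactly why the hypothesis $M\subset L_\infty$ is needed, so that moment-determinacy of bounded vectors can be used. Keeping precise track, at each step, of which finite family of moments is accessible is the part I expect to require the most care.
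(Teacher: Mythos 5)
The paper does not actually prove this statement — it is recalled from Rudin's article with a citation — so there is no in-paper argument to measure yours against; judged on its own, your proof is correct. For $p\notin 2\N$ you rightly observe that the unital-algebra and unital-isometry hypotheses place $(f,g,fg)$ (resp.\ $(f_1,\dots,f_n)$) exactly in the situation of Theorem~\ref{thm=theoreme_1_rudin} via $1+z_1f+z_2g+z_3fg\in M$, and pushing the common joint law forward under $(x,y,z)\mapsto z-xy$ is a clean way to extract $A(fg)=A(f)A(g)$ from equimeasurability. For $p=2k$ with $k\ge 2$ the moment-matching is sound: the mixed moments $\int h^\alpha\overline{h^\beta}\,d\mu$ with $|\alpha|,|\beta|\le k$ are finite by H\"older and occur with nonzero multinomial factors, two polynomials in $(z,\bar z)$ agreeing on $\C^r$ have equal coefficients, and the four moments entering $\|fg-fg\|_2^2=0$ have bidegree at most $(2,2)\le(k,k)$ — which is precisely where $p\neq 2$ enters in the even case. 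The deduction $\|A(h)\|_\infty=\|h\|_\infty$ from $A(h^m)=A(h)^m$ and $\|h\|_{mp}\uparrow\|h\|_\infty$ on a probability space, and the use of moment-determinacy of compactly supported laws for the last bullet, are both correctly deployed. The only comparison worth making is with the strategy the paper itself uses for its noncommutative analogues: there one first upgrades the $L_p$-isometry to $L_2$- and $L_{2n}$-isometries (Theorems~\ref{thm=isometrie_Lp_est_isom_sur_L2} and~\ref{thm=equivalence_entre_dans_L2n_et_image}) and then reads off multiplicativity from a fixed finite family of fourth moments uniformly in $p$, whereas you split into $p\notin 2\N$ versus $p\in 2\N$; your case split costs nothing and makes the even case, which is the genuinely delicate one, quite transparent.
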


\bigskip

In this paper similar results are proved in the \nc{} setting (with
some additional boundedness conditions). The commutative $L_p$-spaces
have to be replaced by \nc{} spaces $L_p(\MM,\tau)$ associated to a von
Neumann algebra $\MM$ with a finite normalized trace $\tau$, and
isometries are replaced by complete isometries. Let us briefly
introduce the vocabulary.

In the whole paper $(\MM,\tau)$ and $(\NN, \widetilde \tau)$ are von Neumann
algebras equipped with normal faithful finite (\nff{}) traces. The units of
$\MM$ and $\NN$ are denoted by $\un_\MM$ and $\un_\NN$ or simply by $\un$.
The traces will always be assumed to be normalized: $\tau(\un)=1$. 

When $n$ is an integer, the set of $\MM$-valued $n \times n$ matrices
is denoted by $\M{n}{\MM}$, is identified with the tensor product $\Mn
\otimes \MM$ and is provided with a normal faithful tracial state $\tau
\sp {(n)} \egdef \tr_n \otimes \tau$. Here $\tr_n$ 
denotes the normalized trace on $\Mn$:
\[\tr_n(a) = \frac 1 n \Tr(a) =  \frac 1 n \sum_{1 \leq j \leq n}
a_{j,j}.\]

The unit of $\M{n}{\MM}$ is $\un_n \otimes \un_\MM$ and will be denoted
simply by $\un$ when no confusion is possible.

Let $0<p<\infty$. If $x \in \MM$, the ``$p$-norm'' of $x$ is denoted by
$\|x\|_p$ and is equal to
\[\|x\|_p = \|x\|_{L_p(\tau)} \egdef \paren{\tau\paren{|x|^p}}^{\nicefrac 1
  p}.\] In the same way, if $x \in \M n \MM$, $\|x\|_p$ denotes the quantity
$\|x\|_{L_p(\tau \sp{(n)})}$. Remark that $\| \cdot \|_p$ is a norm only if
$p \geq 1$.  In this case, $L_p(\MM,\tau)$ is defined as the completion of
$\MM$ with respect to the norm $\| \cdot \|_p$ (see the survey
\cite{MR1999201} for more details, see also section
\ref{part=non_borne}). If $p=\infty$, $L_\infty(\MM,\tau)$ is just $\MM$ with
the operator norm. The space $L_p(\MM,\tau)$ will be denoted by $L_p(\MM)$ of
$L_p(\tau)$ when no confusion is possible.

As usual, the main modification one has to bring in order to deal with
the non commutativity is the fact that one has to allow operator
coefficients instead of scalar coefficients in
\eqref{eq=egalite_des_normes_p_cas_commutatif}.

In the whole paper, we will try to use the following notation: unless
explicitly specified, small letters $x$ or $y$ will stand for elements of
the von Neumann algebras $\MM$ or $\NN$, $a$, $b$ will stand for finite
complex-valued matrices viewed as matricial coefficients. Operators written
with capital letters will be matrices with coefficients in $\MM$ or
$\NN$. The letters $z$ and $\lambda$ (resp. $s$ and $t$) will denote
complex (resp. real) numbers. In a typical equation like
\[ S = \sum_k z_k a_k \otimes x_k \in \M n \MM,\] it should thus be clear to
which set all the $z_k$, $a_k$ and $x_k$ belong.

\bigskip

At least a far as bounded operators are concerned, the fact that two
families of \nc{} random variables (\ie{}  elements of the
$L_p$-spaces) are equimeasurable can be expressed by requesting that they
have the same $*$-distributions. Let us recall the definition of the
distribution of \nc{} random variables. If $(x_i)_{i \in I} \subset \MM$ is
a family of operators in $\MM$, its distribution with respect to $\tau$ is
the linear form on the free algebra generated by elements indexed by $I$
that maps a polynomial $P((X_i)_{i \in I})$ in non commuting variables to
$\tau(P\big((x_i)_{i\in I}\big)$. Its $*$-distribution is the distribution
of $(x_i, x_i^*)_{i \in I}$. The fact that two families of \emph{bounded}
operators have the same $*$-distributions is known to be equivalent to
saying that they generate isomorphic tracial von Neumann algebras (Lemma
\ref{thm=lemme_equidistribution_implique_isomorphisme}).

\bigskip

The main result of this paper is the following theorem:
\begin{thm}
\label{thm=thm_principal} 
Let $(\MM,\tau)$ and $(\NN, \widetilde \tau)$ be von Neumann algebras (on
some Hilbert space $H$) equipped with faithful normal finite normalized
traces. Let $E \subset L_p(\MM,\tau)$ be a subspace of $L_p(\MM,\tau)$, and
let $u:E\rightarrow L_p(\NN,\widetilde \tau)$ be a linear map. Denote by
$\id \otimes u: \Mn \otimes E \rightarrow \Mn \otimes L_p(\widetilde \tau)$
the natural extension of $u$ to $\M{n}{E}$. Fix $0<p<\infty$ such that $p$
is not an even integer.

Assume that the following boundedness condition holds: $E \subset
L_\infty(\MM) (= \MM)$.

Assume that for all $n \in \N$ and all $X \in \M{n}{E}$, the following
equality between the $p$-``norms'' holds:
\begin{equation}
\label{eq=hypothese_p_c.isometrie}
\|\un_n \otimes \un_\MM + X\|_p = \|\un_n \otimes \un_\NN + (\id\otimes
u)(X)\|_p.
\end{equation}

Let $VN(E)$ denote the von Neumann subalgebra generated by $E$ in
$\MM$. Then $u(E) \subset L_\infty(\NN)$ extends to a von Neumann algebra
isomorphism $u:VN(E) \rightarrow VN(u(E))$ that preserves the traces, and
this extension is unique.

In particular, if $E$ is an algebra, then $u$ agrees with the
multiplicative structure of $E$: if $x,y \in E$, then
$u(xy)=u(x)u(y)$. Moreover, if $x \in E$ and $x^* \in E$, then $u(x^*)
= u(x)^*$.
\end{thm}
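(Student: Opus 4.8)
The plan is to reduce the statement to a purely combinatorial identity between $*$-moments, using the fact that equation \eqref{eq=hypothese_p_c.isometrie} encodes the full $*$-distribution. First I would fix $x_1,\dots,x_k \in E$ (all bounded, by the boundedness hypothesis) and consider, for scalar matrices $a_1,\dots,a_k \in \M n \C$ and complex numbers $z_1,\dots,z_k$, the operator $S = \sum_j z_j a_j \otimes x_j \in \M n \MM$. The hypothesis says $\|\un + S\|_p^p = \|\un + T\|_p^p$ where $T = \sum_j z_j a_j \otimes u(x_j)$. The idea is to treat both sides as functions of the $z_j$ (and their conjugates, since $p$ is not an even integer the function $|1+w|^p$ is not a polynomial) and extract coefficients. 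Concretely, I would expand $\tau^{(n)}(|1+S|^p) = \tau^{(n)}\big((1+S^*)^{p/2}(1+S)^{p/2}\big)$ — or more robustly use the integral/analytic representation of $t\mapsto |t|^p$ — to see that the equality of these functions for all $z_j\in\C$ forces equality of all the ``Taylor-like'' coefficients, which are exactly expressions of the form $(\tr_n\otimes\tau)\big((a_{i_1}^{\varepsilon_1}\otimes x_{i_1}^{\varepsilon_1})\cdots(a_{i_m}^{\varepsilon_m}\otimes x_{i_m}^{\varepsilon_m})\big)$ with $\varepsilon\in\{1,*\}$.

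The key step is then to let $n$ and the matrices $a_j$ vary so as to isolate a single mixed $*$-moment of the $x_j$. Given a word $w = X_{i_1}^{\varepsilon_1}\cdots X_{i_m}^{\varepsilon_m}$ in the free $*$-algebra, I want to choose the matrix coefficients $a_j$ (depending on $w$) so that the matrix coefficient of the corresponding term in $\un+S$ is a rank-one nilpotent like $e_{12}+e_{23}+\dots$, ensuring that in the expansion of $\tau^{(n)}(|\un+S|^p)$ the only surviving contribution from the relevant degree is a multiple of $\tau(x_{i_1}^{\varepsilon_1}\cdots x_{i_m}^{\varepsilon_m})$; this is the standard ``freeness-with-amalgamation / matrix trick'' for separating moments, and it is insensitive to which von Neumann algebra we are in, so applying it on both sides yields $\tau(x_{i_1}^{\varepsilon_1}\cdots x_{i_m}^{\varepsilon_m}) = \widetilde\tau(u(x_{i_1})^{\varepsilon_1}\cdots u(x_{i_m})^{\varepsilon_m})$ for every word $w$. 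Since $E$ generates $VN(E)$, this says that $(x_i)_{i}$ and $(u(x_i))_i$ have the same $*$-distribution, and then Lemma~\ref{thm=lemme_equidistribution_implique_isomorphisme} produces a trace-preserving von Neumann algebra isomorphism $VN(E)\to VN(u(E))$ sending $x_i\mapsto u(x_i)$; restricting to $E$ shows it extends $u$, and uniqueness is immediate since $E$ generates $VN(E)$. The last paragraph (multiplicativity on an algebra $E$, and $u(x^*)=u(x)^*$) is then a formal consequence: the isomorphism is multiplicative and $*$-preserving, and it agrees with $u$ on $E$.

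The main obstacle I anticipate is the analytic extraction of $*$-moments from \eqref{eq=hypothese_p_c.isometrie}: since $p$ is not an even integer, $|1+w|^p$ is genuinely not a polynomial, so one cannot simply match polynomial coefficients, and one must instead use an integral representation of $|\cdot|^p$ (e.g.\ via the formula $|s|^p = c_p\int_0^\infty (1-\cos(st))t^{-1-p}\,dt$ or a resolvent/holomorphic functional calculus expression for $(1+S^*)(1+S)$ raised to the power $p/2$) and carefully justify differentiating under the integral and expanding in the $z_j,\bar z_j$ near $0$, controlling the operator-norm bounds uniformly. This is precisely where the boundedness hypothesis $E\subset\MM$ is used: it guarantees the relevant power series in $S$ converge in operator norm on a neighborhood of $0$, so that the analytic identity genuinely reduces to equality of all coefficients. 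Once this analytic step is in place, the combinatorial separation-of-moments argument and the invocation of Lemma~\ref{thm=lemme_equidistribution_implique_isomorphisme} are routine. A secondary technical point is bookkeeping: showing that the set of coefficients one gets really does span all $*$-moments (one should include the constant $\un_n$ carefully and handle the fact that the families $(x_i)$ and $(u(x_i))$ need not be linearly independent), but this is handled by a direct, if slightly tedious, choice of the matrices $a_j$.
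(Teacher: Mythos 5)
Your outline of the moment-extraction step is essentially the paper's own strategy: expand $\|\un+S_z\|_p^p$ in the variables $z_j,\bar z_j$ near $0$ (the binomial series $(S_z^*S_z)^{p/2}=\sum_k\binom{p/2}{k}(S_z^*S_z-\un)^k$ converges in operator norm for small $z$ because the $x_j$ are bounded), and choose the matrix coefficients with the circular-trace property (the $e_{j,j+1}$-type matrices of Remark~\ref{rem=propriete_combinatoire_des_eij}) so that the coefficient of $z^\varepsilon$ isolates a single mixed $*$-moment; this is Lemmas~\ref{thm=lemme_combinatoire} and \ref{thm=egalite_entre_normeP_et_trace_avec_adjoint}, and the conclusion via Lemma~\ref{thm=lemme_equidistribution_implique_isomorphisme} is as you say. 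However, your proposal has two genuine gaps. The first and most serious: you treat the image family $y_i=u(x_i)$ as if the same extraction could be ``applied on both sides,'' but a priori $y_i$ is only an element of $L_p(\NN)$, possibly unbounded. The operator-norm power series is then unavailable on the image side, and even the unbounded variant of the extraction (Lemma~\ref{thm=egalite_entre_normeP_et_trace_avec_adjoint_nb}) requires $y_i\in L_N$ to identify the coefficient of a word of length $N$; moreover Lemma~\ref{thm=lemme_equidistribution_implique_isomorphisme} itself is false for unbounded families (this is the moment problem), and the inclusion $u(E)\subset L_\infty(\NN)$ is part of the \emph{conclusion} of the theorem, not a hypothesis. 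The paper closes this gap with a separate bootstrap (Theorem~\ref{thm=equivalence_entre_dans_L2n_et_image}): using nilpotent $2\times2$ corner embeddings, the positivity properties of the function $\psi$ of Proposition~\ref{thm=proprietes_de_psi} and Fatou's lemma, one proves by induction that $\|u(x)\|_{2n}=\|x\|_{2n}$ for every $n$ and lets $n\to\infty$ to get $\|u(x)\|_\infty\leq\|x\|_\infty<\infty$. Nothing in your sketch supplies this, and without it the argument cannot even get started on the $\NN$ side.

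The second gap concerns where $p\notin2\N$ actually enters. The coefficient you extract is not the moment itself but the moment multiplied by the universal constant $\sum_{k=0}^\alpha (N-k)\binom{p/2}{N-k}\binom{\alpha}{k}$, depending on $p$, the word length $N$ and the pattern $\alpha(\varepsilon)$ of adjacent $(*,1)$ pairs; to deduce $\tau(x_{i_1}^{\varepsilon_1}\cdots)=\widetilde\tau(y_{i_1}^{\varepsilon_1}\cdots)$ you must know this constant is nonzero, which is exactly Lemma~\ref{thm=lemme_coeff_binomial_non_nul} and is precisely what fails for suitable words when $p$ is an even integer (which is why the theorem is false there). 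Your proposal attributes the role of the hypothesis only to ``$|1+w|^p$ is not a polynomial'' and says the surviving contribution is ``a multiple'' of the desired moment, without noticing that the non-vanishing of that multiple is a nontrivial arithmetic fact requiring proof. Both points are fixable only by adding substantial arguments of the kind the paper provides, so as written the proposal is incomplete.
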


First some remarks: as in the commutative case, the condition $p
\notin 2 \N$ is crucial. Indeed in the simplest case when $p = 2 n$
and $E = \C X$ is one-dimensional, with $X^*=X$ and $Y = u(X) = Y^*$,
then condition \eqref{eq=hypothese_p_c.isometrie} holds as soon as the
distributions of $X$ and $Y$ coincide on every polynomial of degree
less than $2n$, which does not imply that the distributions agree on
every polynomial. 

It is also easy to see that it is necessary to allow matrix
coefficients to appear in \eqref{eq=hypothese_p_c.isometrie}, and that
the theorem does not hold when \eqref{eq=hypothese_p_c.isometrie} is
assumed only for $x \in E$. A simple example is when $E = \MM = \NN=
\Mn$ equipped with its normalized trace $\tr_n$ and $u$ is the
transposition map $u:(a_{ij}) \rightarrow (a_{ji})$. Then $u$ is
isometric for every $p$-norm but is not a morphism of algebras.

However, it is unclear whether the theorem holds if
\eqref{eq=hypothese_p_c.isometrie} is only assumed for every $x \in
\M{n}{E}$ for a \emph{fixed} $n$ (even for $n=2$).

When $p = 2m$ is an even integer, the situation is different: it is
possible to show that if \eqref{eq=hypothese_p_c.isometrie} holds for
$n=m$, then
\eqref{eq=hypothese_p_c.isometrie} holds for any $n$. See Theorem
\ref{thm=p=2m_m_isometrie_implique_c.isom}.
\bigskip

The techniques used in the proof of Theorem \ref{thm=thm_principal} do not
allow to state the result when $E$ is a general subspace of $L_p(\MM)$
(\ie{} not necessarily made of bounded operators). Indeed the proof
relies on Lemma \ref{thm=lemme_equidistribution_implique_isomorphisme}
which says that the $*$-distribution of a family of bounded operators
characterizes the von Neumann algebra they generate. This result is known
to be false for unbounded operators even in the commutative case (it is the
moment problem). Moreover the proof relies on the expansion in power series
of operators of the form $|1+x|^p$ which allows to compute the
$*$-distribution of operators (Lemma
\ref{thm=egalite_entre_normeP_et_trace_avec_adjoint}). At first sight this
seems to require that the operator $x$ is bounded. However it is possible
to get some results of this kind for unbounded operators using a \nc{}
version of dominated convergence theorem from \cite{MR840845}: see Lemma
\ref{thm=egalite_entre_normeP_et_trace_avec_adjoint_nb}. It is also
immediate to see that Theorem \ref{thm=thm_principal} still holds if the
boundedness condition is replaced by the assumption that $E \cap
L_\infty(\MM)$ (or even $E \cap L_\infty + u^{(-1)}\left(u(E) \cap
  L_\infty\right)$ by Theorem
\ref{thm=equivalence_entre_dans_L2n_et_image}) is dense in $E$.

In the case when $E$ is self-adjoint and $u$ is assumed to map a
self-adjoint operator to a self-adjoint operator (which is \emph{a
posteriori} always true, see Lemma
\ref{thm=isometrie_preserve_adjoint}), Theorem \ref{thm=thm_principal}
can be deduced from the commutative Theorem
\ref{thm=theoreme_1_rudin}. Although it is contained in the general
case, this special case is proved in the first section of this paper,
since the proof uses the same idea as in the general case but with
simpler computations.

In the second section of this paper the main technical results are
proved. The first one establishes the link between the trace of products of
operators and $p$-norms of linear combinations of these operators (Lemma
\ref{thm=egalite_entre_normeP_et_trace_avec_adjoint} for bounded operators
and Lemma \ref{thm=egalite_entre_normeP_et_trace_avec_adjoint_nb} for the
general case). The second one (Theorem
\ref{thm=equivalence_entre_dans_L2n_et_image}) proves that in the setting
of Theorem \ref{thm=thm_principal}, if $E \subset L_\infty(\MM)$ then $u(E)
\subset L_\infty(\NN)$.
 
In section \ref{section=preuve_resultat_principal} the main theorem
(analogous to Theorem \ref{thm=theoreme_1_rudin}) is derived from
Lemma \ref{thm=egalite_entre_normeP_et_trace_avec_adjoint} (Theorem
\ref{thm=equirepartition} and Theorem \ref{thm=thm_principal}) and
also reformulated in the operator space setting (Corollary
\ref{thm=deuxieme_corollaire}). We also derive an approximation result
and we discuss the necessity of taking matrices of arbitrary size in
\eqref{eq=hypothese_p_c.isometrie} (but this question is mainly left
open).

In a last part, some other consequences of the results of section
\ref{part=resultats_techniques} are established, dealing with maps
defined on subspaces of $L_p$ which have an additional algebraic
structure (e.g. self-adjoint, or stable by multiplication...). In
particular a \nc{} analogue of Rudin's Theorem
\ref{thm=theoreme_2_rudin} is derived. We end the paper with some
comments and questions.

\section{Self-adjoint case}
In this section we prove the special case explained in the
introduction as a consequence of the commutative theorem.

Let $p \in \R^+ \setminus 2\N$, $E \subset \MM$ and $u:E\rightarrow \NN$ be
as in Theorem \ref{thm=thm_principal}. Assume furthermore that $E$ is
self-adjoint (if $x \in E$, $x^* \in E$) and that $u(x^*) = u(x)^*$ for $x
\in E$.
 
Let us sketch the proof in this special case: for any self-adjoint
operators $x_1,\dots x_n$ in $E$, denote $y_k = u(x_k)$. Then for any
self-adjoint matrices $a_1,\dots a_n$, since $\sum_k a_k \otimes x_k$ and
$\sum_k a_k \otimes y_k$ are self-adjoint, they generate commutative von
Neumann algebras, and Rudin's theorem can be applied to deduce that they
have the same distribution. The conclusion thus follows from Lemma
\ref{thm=lemme_equidistribution_implique_isomorphisme} and from the
following linearization result (and the fact that $E$ is spanned by
self-adjoint operators):

\begin{lemma}
\label{thm=cas_auto_adjoint}
Let $x_1,\dots x_n \in \MM$ and $y_1,\dots y_n \in \NN$ be self-adjoint
operators. Assume that for all $m$ and all self-adjoint $m \times m$
matrices $a_1 \dots a_n$, the operators $a_1 \otimes x_1 + \dots a_n
\otimes x_n$ and $a_1 \otimes y_1 + \dots a_n \otimes y_n$ have the
same distribution with respect to the traces $\tr_m \otimes \tau$ and
$\tr_m \otimes \widetilde \tau$:
\begin{equation}
\label{eq=hypo_egalite_des_distribs}
\dist(a_1 \otimes x_1 + \dots a_n \otimes x_n) = \dist (a_1 \otimes y_1 +
\dots a_n \otimes y_n)
\end{equation}

Then $(x_1,\dots x_n)$ and $(y_1,\dots y_n)$ have the same
distribution.
\end{lemma}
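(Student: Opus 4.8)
The plan is to prove directly that $\tau\big(P(x_1,\dots,x_n)\big)=\widetilde\tau\big(P(y_1,\dots,y_n)\big)$ for every noncommutative monomial $P=X_{i_1}\cdots X_{i_d}$; since polynomials are linear combinations of monomials, this is exactly the conclusion. Introduce $c_{i_1,\dots,i_d}=\tau(x_{i_1}\cdots x_{i_d})-\widetilde\tau(y_{i_1}\cdots y_{i_d})$, which is invariant under cyclic permutation of the indices by traciality of $\tau$ and $\widetilde\tau$; the goal is $c\equiv 0$. The first step is to unwind hypothesis \eqref{eq=hypo_egalite_des_distribs}: for fixed self-adjoint $a_1,\dots,a_n\in M_m(\C)$, the operators $S=\sum_k a_k\otimes x_k$ and $T=\sum_k a_k\otimes y_k$ are self-adjoint, so "$S$ and $T$ have the same distribution" is equivalent to "$(\tr_m\otimes\tau)(S^d)=(\tr_m\otimes\widetilde\tau)(T^d)$ for all $d$"; expanding $S^d$ and $T^d$ and subtracting gives
\[
\sum_{i_1,\dots,i_d}\tr_m(a_{i_1}\cdots a_{i_d})\,c_{i_1,\dots,i_d}=0
\]
for every $d\ge 1$, every $m$, and all self-adjoint $a_1,\dots,a_n\in M_m(\C)$.

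The second step is a linearization: given a target word $j=(j_1,\dots,j_d)$, I want test matrices making the displayed sum collapse to $c_j$. Work in $M_d(\C)$ with matrix units $e_{\ell,\ell'}$ and set
\[
a_k^{(0)}=\sum_{\ell\,:\,j_\ell=k}e_{\ell,\ell+1},
\]
indices read modulo $d$. A direct computation with the relations $e_{\ell,\ell+1}e_{\ell',\ell'+1}=\delta_{\ell+1,\ell'}\,e_{\ell,\ell'+1}$ shows that the product $a_{i_1}^{(0)}\cdots a_{i_d}^{(0)}$ is nonzero with nonzero trace only when $(i_1,\dots,i_d)$ is a cyclic shift of $j$, in which case it is a diagonal matrix unit contributing $1/d$ to $\tr_d$, with exactly one such contribution for each of the $d$ starting positions. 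Hence $\sum_{i_1,\dots,i_d}\tr_d(a_{i_1}^{(0)}\cdots a_{i_d}^{(0)})\,c_{i_1,\dots,i_d}$ is the average of $c$ over the cyclic shifts of $j$, which equals $c_j$ by cyclic invariance.

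The catch — and the only step I expect to require care — is that the matrices $a_k^{(0)}$ are not self-adjoint, so the identity from the first step does not apply to them verbatim. To bridge this gap I would observe that, for fixed $m$, the map
\[
(A_1,\dots,A_n)\in M_m(\C)^n\ \longmapsto\ \sum_{i_1,\dots,i_d}\tr_m(A_{i_1}\cdots A_{i_d})\,c_{i_1,\dots,i_d}
\]
is a polynomial in the complex entries of the $A_k$ that involves no complex conjugates; such a polynomial is determined by its restriction to any real-linear subspace $W$ with $W\oplus iW=M_m(\C)^n$ (after a complex-linear change of coordinates $W$ becomes $\R^{nm^2}$ and one is left with a polynomial identity on $\R^{nm^2}$). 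The self-adjoint tuples $\big(M_m(\C)^{\mathrm{sa}}\big)^n$ form such a $W$, and the polynomial vanishes there by the first step, hence it vanishes identically. Evaluating it at $(a_1^{(0)},\dots,a_n^{(0)})$ with $m=d$ yields $c_j=0$; letting $d$ and $j$ range over all words finishes the proof. (One could instead try to manufacture genuinely self-adjoint test matrices from $a_k^{(0)}+(a_k^{(0)})^{*}$ together with suitable imaginary combinations, but then one must bookkeep the extra contributions coming from "reversed" sub-words, which is messier than the above elementary continuation argument.)
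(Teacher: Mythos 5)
Your proof is correct, and the combinatorial engine is the same one the paper uses: products of the matrix units $e_{\ell,\ell+1}$ around a cycle have nonvanishing normalized trace only for circular arrangements, which isolates a single (cyclic class of) word. Where you genuinely diverge is in how the self-adjointness constraint on the test matrices is discharged. The paper first relabels/repeats the operators so that the target moment is $\tau(x_1x_2\cdots x_n)$ with each letter used once, then takes the \emph{self-adjoint} matrices $a_k=z_ke_{k,k+1}+\overline{z_k}e_{k+1,k}$ and reads off the coefficient of $z_1z_2\cdots z_n$ in the degree-$n$ moment, the point being that the conjugate ``backward'' terms land on other monomials and cannot contaminate that coefficient. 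You instead keep the word arbitrary, aggregate the matrix units into $a_k^{(0)}=\sum_{\ell:\,j_\ell=k}e_{\ell,\ell+1}$, and justify plugging in these non-self-adjoint matrices by the observation that $(A_1,\dots,A_n)\mapsto\sum_i\tr_m(A_{i_1}\cdots A_{i_d})\,c_{i_1,\dots,i_d}$ is a conjugate-free polynomial in the entries, so its vanishing on the real form of self-adjoint tuples forces it to vanish identically. That complexification lemma is a clean, reusable way to see that the self-adjointness restriction in the hypothesis costs nothing, at the price of a small abstract detour; the paper's coefficient extraction is more hands-on and stays literally within the hypothesis, at the price of the relabeling step and of checking that the unwanted conjugate terms do not contribute. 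One stylistic caveat: when the word $j$ has a nontrivial period, the product $a^{(0)}_{i_1}\cdots a^{(0)}_{i_d}$ is a sum of several diagonal matrix units rather than a single one; your count (one contribution of $1/d$ per starting position, summing by cyclic invariance to $c_j$) is right, but the sentence claiming ``it is a diagonal matrix unit'' should be phrased per starting position.
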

Independently of our work, this lemma was obtained in \cite{A_CITER} using
random matrices, and was used to give a new formulation of Connes's
embedding problem.

Here we provide a different and elementary proof that consists in
exhibiting specific matrices $a_1,\dots a_n$. The idea is the same as
in the proof of the general case of Theorem \ref{thm=thm_principal},
but here the computations are simpler.

In fact the result of Collins and Dykema is apparently slightly
stronger than the one stated above in the sense that they only assume
that \eqref{eq=hypo_egalite_des_distribs} holds for any self-adjoint
matrices $a_i$ with a spectrum included in $[c,d]$ for some fixed real
numbers $c<d$. But it is not hard to deduce their result from the one
above. More precisely, $m \in \N$ and $c<d$ being fixed, if one only
assumes that \eqref{eq=hypo_egalite_des_distribs} holds for any
self-adjoint matrices $a_i$ of size $m$ with a spectrum included in
$[c,d]$, then it holds for any self-adjoint matrices $a_i \in \Mm$
(without restriction on the spectrum). Indeed, if $c<\lambda<d$ and
$a_i \in \Mm$ are arbitrary, then for $t \in \R$ small enough, the
matrices $\lambda \un_m + t a_i$ all have spectrum in $[c,d]$; and the
distribution of $\sum (\un_m + t a_i) \otimes x_i$ for infinitely
many different values of $t$ is enough to determine the distribution
of $\sum a_i \otimes x_i$.
\begin{proof}[Proof of Lemma \ref{thm=cas_auto_adjoint}]
Let $m$ be an integer and take $(i_1,\dots i_m) \in \left\{1,2,\dots
n\right\}^m$. We want to prove that
\[ \tau(x_{i_1} x_{i_2} \dots x_{i_n}) = \widetilde 
\tau( y_{i_1} y_{i_2} \dots y_{i_n}).
\]

Relabeling and repeating if necessary the $x_i$'s and $y_i$'s, it is
enough to prove it when $m=n$ and $i_k=k$ for all $k$. We are left to
prove that
\begin{equation}
\label{eq=egalite_des_moments_pour_collins_dykema}
 \tau(x_1 x_2 \dots x_{n}) = \widetilde \tau( y_1 y_2 \dots y_n).
\end{equation}

Take $n$ complex numbers $z_1, \dots z_n$ and consider the $n \times
n$ self-adjoint matrices $a_k= z_k e_{k,k+1} + \overline{z_k}
e_{k+1,k}$ if $k<n$ and $a_n = z_n e_{n,1} + \overline{z_n} e_{1,n}$;
the expression $(\tr_n\otimes \tau) \left((\sum_{k=1}^n a_k \otimes
x_k)^n\right)$ can be viewed as a polynomial in the $z_j$'s and the
$\overline{z_j}$'s, and the coefficient in front of $z_1 z_2 \dots
z_n$ is equal to $\tau(x_1 x_2 \dots x_n)$. This is not hard to check
from the trace property of $\tau$ and from the fact that for a
permutation $\sigma$ on $\{1;2\dots n\}$, $\tr_n (e_{\sigma(1),\sigma(1)
+1 \mod n} e_{\sigma(2),\sigma(2) +1 \mod n} \dots
e_{\sigma(n),\sigma(n) +1\mod n})$ is nonzero if and only if $\sigma$
is a circular permutation, in which case it is equal to $1/n$.

Thus \eqref{eq=egalite_des_moments_pour_collins_dykema} holds, and
this concludes the proof.
\end{proof}

\begin{rem}
\label{rem=propriete_combinatoire_des_eij}
The following property of $n$-uples of $m \times m$ matrices $a_1, \dots
a_n \in \Mm$ is the key combinatorial property used in the proof above and
will later on be considered in this paper:
\begin{equation}
\label{eq=propriete_combinatoire_des_eij}
\tr_m(a_{\sigma(1)}a_{\sigma(2)}\dots a_{\sigma(n)}) = \left\{
\begin{array}{ll}
1 & \textrm{for a circular permutation $\sigma$ on }\left\{1;2;\dots
n\right\}.\\ 0 & \textrm{for another permutation $\sigma$.}
\end{array}
\right.
\end{equation}
A permutation $\sigma$ is said to be circular if there is an integer
$k$ such that $\sigma(j) = j + k \mod n$ for all $1\leq j \leq n$.

As noted in the proof above (and it was the main combinatorial trick in the
proof), the matrices $a_j = n^{1/n} e_{j,j+1 \mod n} \in \Mn$ have the
property \eqref{eq=propriete_combinatoire_des_eij}. But in fact in section
\ref{part=resultats_autres}, it will be interesting to find $n$ matrices
$a_1 \dots a_n$ with the same property but with smaller size. And this is
possible with matrices of size $m$ for $m \geq n/2$:

If $n = 2m$, then the following choice of the $a_j \in
\Mm\ (j=1\dots n)$ works:
\begin{eqnarray*}
a_{2j-1}  = e_{j,j} & \textrm{for }j=1\dots m\\
a_{2j}  = e_{j,j+1} & \textrm{for }j=1\dots m-1\\
a_{2m} = m e_{m,1}
\end{eqnarray*}
If $n = 2m-1$, then the following choice of the $a_j \in \Mm \
(j=1\dots n)$ works:
\begin{eqnarray*}
a_{2j-1} = e_{j,j} & \textrm{for }j = 1\dots m-1\\
a_{2j}= e_{j,j+1} & \textrm{for }j=1\dots m-1\\
a_{2m-1} = m e_{m,1}.&
\end{eqnarray*}
 \end{rem}

\section{Expression of the moments in term of the $p$-norms}
\label{part=resultats_techniques}
In this section, we prove that the trace of a product of finitely many
operators or of their adjoints can be computed from the $p$-norm of the
linear (matrix-valued) combinations of these operators. The main results
are Lemma \ref{thm=egalite_entre_normeP_et_trace_avec_adjoint} for bounded
operators and its refinement Lemma
\ref{thm=egalite_entre_normeP_et_trace_avec_adjoint_nb} for unbounded
operators. We also prove that a map $u$ as in Theorem
\ref{thm=thm_principal} maps a bounded operator to a bounded operator
(Theorem \ref{thm=equivalence_entre_dans_L2n_et_image}).

\subsection{Case of bounded operators}

First suppose we are given $x_1, x_2, \dots x_n$ elements of the von
Neumann algebra $\MM$ (here the $x_i$'s are \emph{bounded operators}),
and $\varepsilon_1, \dots \varepsilon_n \in \left\{1,*\right\}$. If
$x$ is an element of a von Neumann algebra and $\varepsilon \in
\left\{1,*\right\}$, let $x^\varepsilon$ denote $x$ if $\varepsilon =
1$ and $x^*$ if $\varepsilon = *$ (for a complex number $z$, $z^* =
\overline z$). 

For clarity, we will adopt the following (classical) notation: if $z =
(z_1,\dots z_n) \in \C^n$ and $k = (k_1,\dots k_n) \in \N^n$, we write $z^k
= \prod_j z_j^{k_j}$ and ${\bar z}^k =\prod_j {\bar z_j}^{k_j}$. In the
same way, one writes $z^\varepsilon =\prod_j z_j ^{\varepsilon_j}$.  If $f$
is a formal series $f(z) = \sum_{k,l \in \N^n} a_{k,l} z^k{ \bar z}^l$, we
denote $f(z)[z^k {\bar z}^l] = a_{k,l}$. 

Pick $n$ matrices $a_1,\dots a_n$ with complex coefficients (say of size
$m$). The $a_j$'s will soon be assumed to satisfy
\eqref{eq=propriete_combinatoire_des_eij}. For all $z = (z_1, z_2, \dots
z_n) \in \C^n$, denote by $S_z \in \M{m}{\MM}$ the matrix
\begin{equation}
\label{eq=definition_de_Sz}
S_z = S(z_1,\dots z_n) = \un + \sum_{j=1}^n z_j a_j^{\varepsilon_j}
\otimes x_j.
\end{equation}

The following combinatorial lemma justifies the choice of the $a_j's$:
\begin{lemma}
\label{thm=lemme_combinatoire}
Denote by $\alpha(\varepsilon)$ or simply $\alpha$ the number of indices $1
\leq j \leq n$ such that $\varepsilon_j =*$ and $\varepsilon_{j+1} =1$
(again if $j=n$, $\varepsilon_{n+1} = \varepsilon_1$).

If the $a_j$'s satisfy \eqref{eq=propriete_combinatoire_des_eij} and $S_z$
is defined by \eqref{eq=definition_de_Sz}, then for any integer $k$,
\begin{equation}
\label{eq=enonce_lemme_combinatoire}
\tau^{(m)} \left( (S_z^* S_z-1)^k [z^\varepsilon]\right) =
\tau(x_1^{\varepsilon_1} x_2^{\varepsilon_2}\dots x_n^{\varepsilon_n })
k\binom{\alpha}{n-k}
\end{equation}
\end{lemma}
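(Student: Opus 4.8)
The plan is to expand $(S_z^*S_z-\un)^k$ and read off the coefficient of the monomial $z^\varepsilon=\prod_j z_j^{\varepsilon_j}$. Write $S_z^*S_z-\un=A+B+C$ with
\[
A=\sum_{j}z_j\,a_j^{\varepsilon_j}\otimes x_j,\qquad
B=\sum_{i}\bar z_i\,(a_i^{\varepsilon_i})^{*}\otimes x_i^{*},\qquad
C=\sum_{i,j}\bar z_i z_j\,(a_i^{\varepsilon_i})^{*}a_j^{\varepsilon_j}\otimes x_i^{*}x_j;
\]
then every summand of $(S_z^*S_z-\un)^k$ is a product $T_1\cdots T_k$ obtained by selecting one elementary summand of $A$, $B$ or $C$ in each of the $k$ factors. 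I would first observe that such a product can contribute to the coefficient of $z^\varepsilon$ only if it involves each of the variables $z_j$ (when $\varepsilon_j=1$) or $\bar z_j$ (when $\varepsilon_j=*$) exactly once; hence an $A$-summand may only use an index $j$ with $\varepsilon_j=1$, a $B$-summand only an index $i$ with $\varepsilon_i=*$, and a $C$-summand only a pair $(i,j)$ with $i\ne j$, $\varepsilon_i=*$ and $\varepsilon_j=1$. The observation that makes the whole computation work is that in every such contributing product all adjoints on the matrices cancel --- indeed $(a_i^{\varepsilon_i})^{*}=a_i$ when $\varepsilon_i=*$, and $a_j^{\varepsilon_j}=a_j$ when $\varepsilon_j=1$ --- so the matrix attached to an index $j$ is always simply $a_j$, while the operator it carries is exactly $x_j^{\varepsilon_j}$.

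Next I would use that $\tau^{(m)}=\tr_m\otimes\tau$ factorizes as a product. Reading off the indices of a contributing product $T_1\cdots T_k$ from left to right (and, inside a $C$-summand, the $*$-index before the $1$-index) produces a permutation $(l_1,\dots,l_n)$ of $\{1,\dots,n\}$; the matrix part of the product is then $a_{l_1}\cdots a_{l_n}$ and its operator part is $x_{l_1}^{\varepsilon_{l_1}}\cdots x_{l_n}^{\varepsilon_{l_n}}$. By \eqref{eq=propriete_combinatoire_des_eij}, $\tr_m(a_{l_1}\cdots a_{l_n})$ equals $1$ if $(l_1,\dots,l_n)$ is a cyclic shift of $(1,2,\dots,n)$ and $0$ otherwise; and in the cyclic-shift case the trace property of $\tau$ gives $\tau(x_{l_1}^{\varepsilon_{l_1}}\cdots x_{l_n}^{\varepsilon_{l_n}})=\tau(x_1^{\varepsilon_1}\cdots x_n^{\varepsilon_n})$. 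It follows that $\tau^{(m)}\!\left((S_z^*S_z-\un)^k[z^\varepsilon]\right)=\tau(x_1^{\varepsilon_1}\cdots x_n^{\varepsilon_n})\cdot N_k$, where $N_k$ is the number of contributing products whose index sequence is a cyclic shift of $(1,\dots,n)$, so the lemma reduces to the identity $N_k=k\binom{\alpha}{n-k}$.

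This count is the heart of the matter, and I would carry it out bijectively. A contributing product with cyclic-shift index sequence amounts to writing a cyclic shift $(s,s+1,\dots,s-1)$ of $(1,\dots,n)$ as a concatenation $(T_1,\dots,T_k)$ of $k$ consecutive blocks of size $1$ or $2$, the size-$2$ blocks being exactly the $C$-summands; a size-$2$ block is of the form $\{i,i+1\}$ (indices modulo $n$) with $\varepsilon_i=*$ and $\varepsilon_{i+1}=1$, i.e.\ it sits at one of the $\alpha$ ``$*\to 1$'' transitions, and since $\varepsilon_{i+1}$ cannot be both $1$ and $*$ the pairs coming from distinct transitions are pairwise disjoint. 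A degree count --- with $a$ summands of type $A$, $b$ of type $B$ and $c$ of type $C$ one has $a+b+c=k$, $a+c=\#\{j:\varepsilon_j=1\}$ and $b+c=\#\{j:\varepsilon_j=*\}$ --- forces exactly $c=n-k$ size-$2$ blocks and $2k-n$ singletons. Hence a contributing configuration is the same datum as a choice of which $n-k$ of the $\alpha$ transitions are merged into size-$2$ blocks (this glues the $n$ letters into a cyclic arrangement of $k$ blocks) together with a choice of one of the $k$ gaps of that cyclic arrangement at which to cut it into the linear sequence $(T_1,\dots,T_k)$ --- the $A$/$B$/$C$ type of each block being then determined by the $\varepsilon_j$'s --- which gives $N_k=k\binom{\alpha}{n-k}$. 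In the remaining ranges ($n-k<0$, or $n-k>\alpha$, which since $\alpha\le n/2$ also covers $k<n/2$) both $N_k$ and $k\binom{\alpha}{n-k}$ vanish. The only real difficulty is to run this enumeration without slips --- pinning down that the factor $k$ is the number of cyclic-to-linear cuts and the binomial the number of choices of merged transitions, and handling the boundary cases.
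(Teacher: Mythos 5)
Your proof is correct and follows essentially the same route as the paper's: the same decomposition of $S_z^*S_z-\un$ into the three sums, the same observation that the adjoints on the matrix legs cancel so each contributing term carries $a_j\otimes x_j^{\varepsilon_j}$, and the same reduction via \eqref{eq=propriete_combinatoire_des_eij} to counting decompositions of cyclic shifts of $(1,\dots,n)$ into $k$ bricks of size $1$ or $2$. The only difference is in the final count: where the paper sums $\binom{\alpha_l}{n-k}$ over the $n$ starting points and then invokes the identity $\alpha\binom{\alpha-1}{n-k}+(n-\alpha)\binom{\alpha}{n-k}=k\binom{\alpha}{n-k}$, you get $k\binom{\alpha}{n-k}$ in one stroke by the bijection (choose the $n-k$ merged transitions cyclically, then choose one of the $k$ gaps at which to cut) --- a slightly cleaner finish to the same enumeration.
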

\begin{proof}
Recall that
\[S_z^* S_z - \un = \sum_{j \leq n} z_j a_j^{\varepsilon_j} \otimes x_j 
 + \sum_{j \leq n} \overline{z_j} a_j^{\varepsilon_j *} \otimes x_j^*
+\sum_{i,j \leq n} \overline{z_i} z_j a_i^{\varepsilon_i *}
a_j^{\varepsilon_j} \otimes x_i^* x_j .\]

For one of the terms of $\sum_{j \leq n} z_j
a_j^{\varepsilon_j}\otimes x_j$ to bring a contribution to the
coefficient of $\prod_j z_j ^{\varepsilon_j}$ in $(S_z^* S_z -
\un)^k$, it is necessary that $\varepsilon_j=1$, and then $ z_j
a_j^{\varepsilon_j}\otimes x_j = z_j^{\varepsilon_j} a_j \otimes
x_j^{\varepsilon_j}$.  In the same way, for one of the terms of
$\sum_{j \leq n} \overline{z_j} a_j^{\varepsilon_j * }\otimes
x_j^* $ to bring a contribution, it is necessary that
$\varepsilon_j=*$ and then $ \overline{z_j} a_j^{\varepsilon_j
*}\otimes x_j^* = z_j^{\varepsilon_j} a_j \otimes
x_j^{\varepsilon_j}$. Last, for one of the terms of $\sum_{i,j \leq
n} \overline{z_i} z_j a_i^{\varepsilon_i *} a_j^{\varepsilon_j}
\otimes x_i^* x_j$ to have a nonzero contribution, the values of
$\varepsilon_i$ and $\varepsilon_j$ must be $\varepsilon_i=*$ and
$\varepsilon_j=1$, and then $\overline{z_i} z_j a_i^{\varepsilon_i
*} a_j^{\varepsilon_j} \otimes x_i^* x_j = z_i^{\varepsilon_i}
z_j^{\varepsilon_j} a_i a_j \otimes x_i^{\varepsilon_i}
x_j^{\varepsilon_j}$. Thus if one denotes $y_j = x_j^{\varepsilon_j}$,
\begin{multline*}
  \tau^{(m)}\paren{\paren{S_z^*S_z-\un}^k} [z^\varepsilon] = \\
   \tau^{(m)}\paren{\paren{\sum_{1\leq j \leq n} z_j^{\varepsilon_j}
    a_j \otimes y_j + \!\!\!\!\!\!\!\!
   \sum_{ i,j , \varepsilon_i=*\textrm{ and } \varepsilon_{j}=1} 
   \!\!\!\!\!\!\!z_i^{\varepsilon_i} z_j^{\varepsilon_j}
    a_i a_j \otimes y_i y_j }^k} [z^\varepsilon].
\end{multline*}

Developing and using the assumption
\eqref{eq=propriete_combinatoire_des_eij} on the $a_j$'s, one gets
\begin{equation}
\label{eq=valeur_de_gamma_n}
\tau^{(m)}\paren{\paren{S_z^*S_z-\un}^k} [z^\varepsilon] = \sum_{l=1}^n
C_l \tau\paren{ y_l y_{l+1} \dots y_{l-1}},
\end{equation} 
where the indices have to be understood modulo $n$ and where $C_l$ denotes
the number of ways of writing formally the word $y_l y_{l+1} \dots y_{l-1}$
(which is of length $n$) as a concatenation of $k$ ``elementary bricks'' of
the form $y_j$ (for $1\leq j \leq n$) or $y_j y_{j+1}$ with
$\varepsilon_j=*$ and $\varepsilon_{j+1}=1$. Each of these bricks has
length $1$ or $2$. If $\alpha_l$ denotes the number of apparitions of the
subsequence $*,1$ in the sequence $\varepsilon_l,\varepsilon_{l+1 \mod n},
\dots \varepsilon_{l-1 \mod n}$ (not cyclically this time!), then for $C_l$
to be non zero it is necessary that $k \leq n \leq k+\alpha_l$. In that
case $C_l$ is equal to the number of ways of choosing the $n-k$ bricks of
size $2$ among the $\alpha_j$ possible, the other bricks being of size
$1$. Thus $C_l = \binom{\alpha_l}{n-k}$. The fact that $\tau$ is a trace
then allows to write \eqref{eq=valeur_de_gamma_n} as
\begin{eqnarray*}
\tau^{(m)}\paren{\paren{S_z^*S_z-\un}^k} [z^\varepsilon]&=&
\tau\paren{y_1 y_2 \dots y_n} \sum_l \binom{\alpha_l}{n-k}\\ &=&
\tau\paren{x_1^{\varepsilon_1} x_2^{\varepsilon_2}\dots x_n^{\varepsilon_n
}} \sum_l \binom{\alpha_l}{n-k}.
\end{eqnarray*}

It remains to notice that $\alpha_l = \alpha -1$ if $\varepsilon_{l-1}=*$
and $\varepsilon_{l}=1$ (which is the case for $\alpha$ different values of
$l$), and that $\alpha_l = \alpha$ otherwise (for the $n-\alpha$ remaining
values of $l$). The preceding equation then becomes
\begin{equation*}
\tau^{(m)}\paren{\paren{S_z^*S_z-\un}^k} [z^\varepsilon] =
  \tau\paren{x_1^{\varepsilon_1} x_2^{\varepsilon_2}\dots
  x_n^{\varepsilon_n }} \paren{ \alpha\binom{\alpha-1}{n-k} + (n-\alpha)
  \binom{\alpha}{n-k}}.
\end{equation*}

Equation \eqref{eq=enonce_lemme_combinatoire} follows from the elementary
equality
\[\alpha\binom{\alpha-1}{n-k} + (n-\alpha) \binom{\alpha}{n-k} =
k\binom{\alpha}{n-k}.\]
\end{proof}
Note that the above proof only uses combinatorial arguments, it therefore
also holds with minor modifications when the assumption $x_j \in
\mathcal M$ is replaced by $x_j \in L_n(\MM, \tau)$ for all $j$.

The following lemma establishes the link between the $p$-norm of $S_z$
and the trace of the product of the $x_j^{\varepsilon_j}$.
\begin{lemma}
\label{thm=egalite_entre_normeP_et_trace_avec_adjoint}
Let $0<p<\infty$. Let $a_1,\dots a_n$ be matrices \emph{satisfying
  \eqref{eq=propriete_combinatoire_des_eij}}, and, remembering
\eqref{eq=definition_de_Sz}, define the function $\varphi: \R^n \rightarrow
\C$ by
\[\varphi(r_1, \dots r_n) = \frac{1}{(2\pi)\sp n} \int_{[0,2\pi]^n} \!\!
\left\|S\paren{ r_1 e^{i \theta_1}, \dots r_n e^{i \theta_n}}\right\|_p^p
    \prod_j \exp(-i \theta_j)^{\varepsilon_j}  d \theta_1 \dots d
      \theta_n.\]

Then $\varphi$ is indefinitely differentiable on a neighborhood of $0$, and
if $\alpha$ is defined as in Lemma \ref{thm=lemme_combinatoire}
\begin{multline}
\label{eq=egalite_entre_normeP_et_trace_avec_adjoint}
\frac{d^{(n)}}{d r_1 \dots d r_n} \varphi (0,\dots 0) = \lim_{r_1,
  \dots r_n \rightarrow 0} \frac{1}{r_1 \dots r_n }\varphi(r_1, \dots
  r_n) = \\ \tau(x_1^{\varepsilon_1} x_2^{\varepsilon_2}\dots
  x_n^{\varepsilon_n }) \sum_{k=0}^\alpha (n-k) \binom{\nicefrac p
  2}{n-k}\binom{\alpha}{k}.
\end{multline}
\end{lemma}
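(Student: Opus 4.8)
The plan is to compute the Fourier coefficient $\varphi(r_1,\dots,r_n)$ by expanding $\|S_z\|_p^p = \tau^{(m)}(|S_z|^p) = \tau^{(m)}((S_z^*S_z)^{p/2})$ in a power series in the operator $S_z^*S_z - \un$, and then to isolate the coefficient of $z^\varepsilon$ using the combinatorial input already available in Lemma~\ref{thm=lemme_combinatoire}. First I would observe that since $x_1,\dots,x_n$ are bounded and the $a_j$ are fixed matrices, the operator $S_z^*S_z$ lies in a bounded neighborhood of $\un$ (with operator norm bounded away from $0$ and $\infty$) for $z$ in a neighborhood of $0$ in $\C^n$; hence the holomorphic functional calculus gives the absolutely convergent expansion
\[
(S_z^*S_z)^{p/2} = \sum_{k\geq 0} \binom{p/2}{k}\,(S_z^*S_z - \un)^k,
\]
valid for $\|S_z^*S_z-\un\| < 1$, which holds on a small polydisc $\{|z_j|<\delta\}$. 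Applying $\tau^{(m)}$ and using that the trace is norm-continuous, I get $\|S_z\|_p^p = \sum_{k\geq 0}\binom{p/2}{k}\,\tau^{(m)}((S_z^*S_z-\un)^k)$, a convergent power series in the $z_j$ and $\overline{z_j}$ on that polydisc; this legitimizes differentiating term by term and, in particular, shows $\varphi$ is smooth near $0$ and that its relevant derivative picks out the coefficient of $z^\varepsilon = \prod_j z_j^{\varepsilon_j}$.

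Next, passing to polar coordinates $z_j = r_j e^{i\theta_j}$, the integral $\frac{1}{(2\pi)^n}\int_{[0,2\pi]^n}\|S_z\|_p^p \prod_j \exp(-i\theta_j)^{\varepsilon_j}\,d\theta$ simply extracts, from the power series $\sum a_{k,l}z^k\bar z^l$ for $\|S_z\|_p^p$, exactly the part matching the monomial pattern $z^\varepsilon$: writing $z^\varepsilon = \prod_{\varepsilon_j=1} z_j \prod_{\varepsilon_j=*}\bar z_j$, one finds $\varphi(r_1,\dots,r_n) = \big(\text{power series in } r_1^2,\dots,r_n^2\big) \cdot r_1\cdots r_n + (\text{higher order})$, so that $\lim_{r\to 0}\frac{1}{r_1\cdots r_n}\varphi(r) = \|S_z\|_p^p[z^\varepsilon]$. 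Therefore
\[
\lim_{r_1,\dots,r_n\to 0}\frac{1}{r_1\cdots r_n}\varphi(r_1,\dots,r_n) = \sum_{k\geq 0}\binom{p/2}{k}\,\tau^{(m)}\big((S_z^*S_z-\un)^k\big)[z^\varepsilon].
\]
Now I invoke Lemma~\ref{thm=lemme_combinatoire}: each term equals $\binom{p/2}{k}\,\tau(x_1^{\varepsilon_1}\cdots x_n^{\varepsilon_n})\,k\binom{\alpha}{n-k}$. Since $S_z^*S_z-\un$ has no constant term, $(S_z^*S_z-\un)^k$ contributes only monomials of total degree at least $k$, and $z^\varepsilon$ has total degree $n$; moreover each bracketed word of length $n$ is cut into $k$ bricks of size $1$ or $2$, so $k\leq n\leq 2k$, i.e. the only nonzero terms have $n/2 \leq k \leq n$. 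Reindexing by $k' = n-k$ (so $0\leq k' \leq n/2 \leq n-k' = k$), and using $k\binom{\alpha}{n-k} = (n-k')\binom{\alpha}{k'}$ together with $\binom{p/2}{k} = \binom{p/2}{n-k'}$, the sum becomes $\tau(x_1^{\varepsilon_1}\cdots x_n^{\varepsilon_n})\sum_{k'=0}^{\alpha}(n-k')\binom{p/2}{n-k'}\binom{\alpha}{k'}$, where the range is cut at $\alpha$ because $\binom{\alpha}{k'}=0$ for $k'>\alpha$. Renaming $k'$ as $k$ gives exactly the asserted formula.

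The one point requiring a little care — and the main obstacle — is the justification that the holomorphic-functional-calculus expansion of $t\mapsto t^{p/2}$ around $t=1$, valid for scalars $t$ in $(0,2)$, transfers to the operator $S_z^*S_z$ together with term-by-term application of $\tau^{(m)}$ and term-by-term differentiation in $r$; this needs the uniform bound $\|S_z^*S_z - \un\|_\infty \leq \sum_j |z_j|\,\|a_j\|\,\|x_j\| + (\cdots) < 1$ on a polydisc, so that the convergence is in operator norm, hence in every $L_q$ including the trace, uniformly on compact subsets of the polydisc, which permits interchanging limits, integrals and derivatives. Once this analytic bookkeeping is in place, everything else is the bookkeeping of formal power series and the binomial identities recalled above; note, as remarked after Lemma~\ref{thm=lemme_combinatoire}, the same argument goes through verbatim when $x_j\in L_n(\MM,\tau)$ rather than $x_j\in\MM$, since only finitely many powers $(S_z^*S_z-\un)^k$ with $k\leq n$ actually contribute to the coefficient of $z^\varepsilon$.
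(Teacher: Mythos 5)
Your argument is correct and is essentially the paper's own proof: expand $(S_z^*S_z)^{p/2}$ by the binomial series via functional calculus on a polydisc where $\|S_z^*S_z-\un\|<1$, take $\tau^{(m)}$ of the absolutely convergent series, note that the Fourier integral in $\theta$ extracts precisely the coefficient of $z^\varepsilon$ (so $\varphi(r)=r_1\cdots r_n$ times a power series in the $r_j^2$, giving the smoothness and the limit), and then plug in Lemma \ref{thm=lemme_combinatoire} with the reindexing $k\mapsto n-k$. The only overstatement is your closing aside: for $x_j\in L_n(\MM,\tau)$ unbounded only the combinatorial Lemma \ref{thm=lemme_combinatoire} carries over directly, whereas the norm expansion of $|S_z|^p$ for $p\notin 2\N$ does not make sense there and is handled separately in the paper via the dominated convergence argument of Lemma \ref{thm=egalite_entre_normeP_et_trace_avec_adjoint_nb}.
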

\begin{proof}
The idea of the proof is the following: $S_z$ is a small perturbation
of the unit, which allows to write $\left|S_z\right|^p$ as a
converging series. Equation
\eqref{eq=egalite_entre_normeP_et_trace_avec_adjoint} follows from the
identification of the term in front of $\prod_j
z_j^{\varepsilon_j}$. First write:

\begin{eqnarray*}
S_z^* S_z & = &\un + \sum_{j \leq n} z_j
a_j^{\varepsilon_j}\otimes x_j + \sum_{j \leq n} \overline{z_j}
a_j^{\varepsilon_j *} \otimes x_j^* +\sum_{i,j \leq n}
\overline{z_i} z_j a_i^{\varepsilon_i *} a_j^{\varepsilon_j}
\otimes x_i^* x_j\\ & = & \un + \sum_{1\leq j\leq n^2 + 2n} C_j.
\end{eqnarray*}

In the last line, we denoted by $C_j$ the $n^2 + 2n$ terms that appear on
the preceding line. Remark that if $\sup |z_j| = \delta \leq 1$, then
$\|C_j\| \leq \delta K$ where $K =
\max_j\paren{\|a_j\|\|x_j\|,\|x_j\|^2\|x_j\|^2}$.

By the functional calculus for bounded operators, for $z$ small enough
(\ie{} $\|\un - S_z^* S_z\| < 1$), one has:
\begin{equation}
\label{eq=valeur_de_S^p}
(S_z^* S_z)^{\nicefrac p 2} = \sum_{k \geq 0} \binom{\nicefrac p 2}{k}
\paren{S_z^* S_z -\un}^k = \sum_{k \geq 0} \binom{\nicefrac p 2}{k}
\sum_{1\leq j_1, \dots j_k \leq n^2 + 2n} C_{j_1} \dots C_{j_k}.
\end{equation}

In this equality, $\binom{\nicefrac p 2}{n}$ is the generalized binomial
coefficient defined, for $\beta \in \C$ and $n \in \N$, by: 
\begin{equation}
\label{eq=def_coeff_binomial}
\binom{\beta}{n} = \beta (\beta -1) \dots (\beta - n+1)/n!.
\end{equation}

The series above converges absolutely and uniformly when $\delta = \sup
|z_j|$ is small enough, \ie{} $K (n^2+2n) \delta < 1$. Indeed, $\|C_{j_1}
\dots C_{j_k}\| \leq \delta^k K^k$, and since $\binom{\nicefrac p 2}{k}$
tends to $0$ as $k \rightarrow \infty$, one has
\[\sum_{k \geq 0} \sum_{1\leq j_1, \dots j_k \leq n^2 + 2n}
\sup_{|z_j|\leq \delta \forall j}\left\|\binom{\nicefrac p 2}{k}
  C_{j_1} \dots C_{j_k}\right\| \leq \sum_{k \geq 0} (n^2 + 2n)^k
\left|\binom{\nicefrac p 2}{k}\right| \delta^k K^k < \infty.\]

We can thus reorder the terms of the sum \eqref{eq=valeur_de_S^p}
along powers of $z_j$ and $\overline z_j$: 
\begin{equation}
\label{eq=_somme_Sz_p_reorganisee}
 |S_z|^p = \sum_{ k, l \in \N^n} \, z_1^{k_1} \dots z_n^{k_n}
  {\bar z_1}^{l_1} \dots {\bar z_n}^{l_n} D_{k,l} ,
\end{equation}
where $D_{ k, l}$ are some operators in $\Mm \otimes \MM$, which are in fact
some polynomials in $a_1^{\varepsilon_1}\otimes x_1 \dots
a_n^{\varepsilon_n} \otimes x_n$ and their adjoints. Taking the trace
$\tau^{(m)}$ on both sides of \eqref{eq=_somme_Sz_p_reorganisee} , one gets

\begin{equation}
\label{eq=norme_Sz_reorganisee}
 \|S_z\|_p^p = \sum_{ k,  l \in
  \N^n} \lambda_{ k,  l}\, z_1^{k_1} \dots z_n^{k_n}
{\bar z_1}^{l_1} \dots {\bar z_n}^{l_n}.
\end{equation}

In this sum, we wrote $ k=(k_1, \dots k_n)$ and $ l = (l_1, \dots
l_n)$. The coefficient $\lambda_{ k, l}$ is equal to
\[ \lambda_{k,l} = \sum_{j \leq |l|+|k|} \binom{p/2}{j}
\tau^{(m)}\paren{\paren{S_z^*S_z-\un}^j} [z^k {\bar z}^l]. \]

If $E$ is defined as the set of indices $(k,l) \in \N^n \times \N^n$
such that $k_j - l_j = 1$ if $\varepsilon_j=1$ and $k_j - l_j = -1$ if
$\varepsilon_j=*$, then for $r_1, \dots r_n$ small enough, we are
allowed to exchange the series and the integral in the definition of
$\varphi(r_1,\dots r_n)$ and we get the following expression of $\varphi$
as a converging power series:
\[\varphi(r_1,\dots r_n) = \sum_{(k,l) \in E} \lambda_{k,l}\, r_1^{k_1+l_1}
\dots
r_n^{k_n+l_n}.\]

The two left-hand sides of
\eqref{eq=egalite_entre_normeP_et_trace_avec_adjoint} are thus equal
to $\lambda_{k^0,l^0}$ where $k^0_j = 1$ if $\varepsilon_j=1$, $k^0_j =
0$ else, and $l^0_j = 1 - k^0_j$. In other words, $\lambda_{k^0,l^0}$ is
the coefficient of $\prod_j z_j ^{\varepsilon_j}$ in
\eqref{eq=norme_Sz_reorganisee}:
\begin{equation}
\label{eq=egalite_entre_phi_et_bkl}
\frac{d^{(n)}}{d r_1 \dots d r_n} \varphi (0,\dots 0) = \lim_{r_1, \dots
  r_n \rightarrow 0} \frac{1}{r_1 \dots r_n }\varphi(r_1, \dots r_n) =
  \lambda_{k^0,l^0},
\end{equation}
with
\begin{equation}
\label{eq=valeur_de_bkl}
  \lambda_{k^0,l^0} = \sum_{j\in \N} \binom{\nicefrac p
    2}{j}\underbrace{\tau^{(m)}\paren{\paren{S_z^*S_z-\un}^j}
    [z^\varepsilon]}_{\egdef \gamma_j}.
\end{equation}

But from Lemma \ref{thm=lemme_combinatoire}, 
\[\gamma_j = \tau\paren{x_1^{\varepsilon_1} x_2^{\varepsilon_2}\dots
  x_n^{\varepsilon_n }} j\binom{\alpha}{n-j}.\]

Putting this equation together with \eqref{eq=egalite_entre_phi_et_bkl} and
\eqref{eq=valeur_de_bkl}, we finally get
\eqref{eq=egalite_entre_normeP_et_trace_avec_adjoint}, which proves the
Lemma.
\end{proof}
\begin{rem}
\label{rem=ppair_on_a_meme_chose}
In the case when $p/2$ is an integer (\ie{} $p$ is an even
integer), the same result holds in a more general setting, when the
$x_i$'s are not bounded but are in the \nc{} $L_p$ space
associated to $(\MM,\tau)$. Indeed, then the sum on the right-hand side
of \eqref{eq=valeur_de_S^p} makes sense as a finite sum of elements
which all are in $L_1(\MM,\tau)$. Indeed, from H{\"o}lder's inequality, a
product of $k$ elements of $L_p$ is in $L_{p/k}$. This allows to take
the trace in \eqref{eq=valeur_de_S^p} and to follow the rest of the
proof.

Of course when $p$ is different from an even integer, the proof does
not apply for unbounded operators: it is indeed unclear what sense
should be given to the series \eqref{eq=valeur_de_S^p}, and more
importantly taking the trace to get \eqref{eq=norme_Sz_reorganisee}
makes no sense. However, using a \nc{} dominated convergence theorem
from \cite{MR840845}, it is possible to modify the proof and get
similar results with unbounded operators.
\end{rem}

\subsection{Unbounded case}
\label{part=non_borne}
The reader is refered to \cite{MR840845} for all facts and definitions on
measure topology and generalized $s$-numbers. Just recall that if
$(\MM,\tau)$ is a von Neumann algebra with a \nff{} normalized trace, the
$t$-th singular number of a closed densely defined (possibly unbounded)
operator $Y$ affiliated to $\MM$ is defined as
\[\mu_t(Y) = \inf\left\{ \|YE\|, E \textrm{ is a projection in $\MM$ with } 
  \tau(\un - E) \leq t\right\}.\] 
The map $t \rightarrow \mu_t(Y)$ is non-increasing and vanishes on $t \geq
1$, and $\mu_t(Y)<\infty$ if $t>0$.

Moreover the measure topology makes the set of $\tau$-measurable operators
affiliated to $\MM$ a topological $*$-algebra in which a sequence $(Y_n)$
converges to $Y$ if and only if $\mu_t(Y-Y_n) \rightarrow 0$ for all
$t>0$. More precisely, the following inequalities hold for any positive
real numbers $s,t>0$ and any (closed densely defined) operators $T$ and $S$
affiliated with $\MM$ (Lemma 2.5 in \cite{MR840845}):
\begin{eqnarray}
\mu_t(\lambda T) &=& |\lambda| \mu_t(T) \textrm{ for any }\lambda \in \C\\
\label{eq=inegalite_sing_num_adjoint}
\mu_t(T) &=& \mu_t(|T|) = \mu_t(T^*)\\
\label{eq=inegalite_sing_num_add}
\mu_{t+s}(T+S)& \leq & \mu_t(T) + \mu_s(S)\\
\label{eq=inegalite_sing_num_mult}
\mu_{t+s}(TS) &\leq & \mu_t(T) \mu_s(S).
\end{eqnarray}

Another property from \cite[Lemma 2.5]{MR840845} is the fact that
$\mu_s(f(T)) = f(\mu_s(T))$ for any operator $T \geq 0$ and any continuous
increasing function on $\R$ with $f(0)=0$. As a consequence, for any
continuous function $f$ on $\R$ with $f(0)=0$ and any self-adjoint $T$
affiliated with $\MM$, 
\begin{equation}
\label{eq=inegalite_sing_num_fonction}
\mu_t(f(T)) \leq \sup_{ |u| \leq \mu_t(T)} |f(u)|
\end{equation}

For any $0<p\leq\infty$, the \nc{} $L_p$-space $L_p(\MM,\tau)$ is identified
with the set of closed densely defined operators $Y$ affiliated to $\MM$ such
that the function $t \mapsto \mu_t(Y)$ is in $L_p([0,1], dt)$. Moreover,
the $p$-norm of this function is equal to $\|Y\|_p$.

We now fix $0<p<\infty$.

The first fact we prove is the following lemma, which basically says
that when the $x_j$'s are unbounded operators affiliated with $\MM$,
the development in power series of $|S_z|^p$
\eqref{eq=_somme_Sz_p_reorganisee} still holds, but in the measure
topology instead of the norm topology. 

\begin{lemma}
\label{thm=convergence_en_mesure_cas_NC}
Let $X$ be a closed densely defined operator affiliated with a von Neumann
algebra $(\MM, \tau)$. For $r>0$, denote by $Y_r$ the operator
\[Y_r = (\un+r X)^*(\un+r X) - \un = r X + r X^* + r^2 X^* X. \]

Then as $r \rightarrow 0$, the following convergence holds in the
measure topology:
\begin{equation}
\frac{1}{r^n}\left( |\un + rX|^p - \sum_{j=0}^n \binom{\nicefrac p 2}{j}
Y_r^j\right) \rightarrow 0.
\end{equation}
\end{lemma}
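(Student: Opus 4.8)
The plan is to reduce the statement to the one-variable functional-calculus fact that $(1+s)^{p/2} = \sum_{j\ge 0}\binom{p/2}{j}s^j$ on $(-1,1)$, and then to control the tail and the remainder in the measure topology using the $s$-number inequalities \eqref{eq=inegalite_sing_num_add}, \eqref{eq=inegalite_sing_num_mult}, \eqref{eq=inegalite_sing_num_fonction} recalled above. First I would fix $t>0$ and observe that $\mu_t(X)<\infty$, so that $\mu_t(Y_r)\le r\,\mu_{t/3}(X)+r\,\mu_{t/3}(X^*)+r^2\mu_{t/6}(X)\mu_{t/6}(X)$ by subadditivity and submultiplicativity; hence for $r$ small (depending on $t$) one has $\mu_t(Y_r)\le 1/2$, say. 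Since $Y_r$ is self-adjoint and affiliated with $\MM$, and $|1+rX|^p = (\un+Y_r)^{p/2} = f(Y_r)$ where $f(s)=(1+s)^{p/2}$ is continuous on a neighbourhood of $[-1/2,1/2]$, this is the regime where the scalar power series converges.

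The main work is to estimate the remainder $R_{n,r}\egdef |1+rX|^p - \sum_{j=0}^n\binom{p/2}{j}Y_r^j = g_n(Y_r)$, where $g_n(s) = (1+s)^{p/2} - \sum_{j=0}^n\binom{p/2}{j}s^j$. Here I would use \eqref{eq=inegalite_sing_num_fonction}: splitting $[0,1]=[0,t]\cup(t,1]$ in the $L_p$-integral defining the measure topology is not needed; rather, directly $\mu_t(g_n(Y_r)) \le \sup_{|u|\le \mu_t(Y_r)}|g_n(u)|$. By Taylor's theorem with integral remainder (or simply the bound on the tail of a convergent power series), $\sup_{|u|\le 1/2}|g_n(u)| \le C\,\sup_{|u|\le 1/2}|u|^{n+1}$ for a constant $C$ depending only on $p$ and $n$, so that $\sup_{|u|\le \mu_t(Y_r)}|g_n(u)| \le C\,\mu_t(Y_r)^{n+1}$. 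Combining with the estimate $\mu_t(Y_r) \le c_t\, r$ from the first paragraph (valid once $r$ is small), we get $\mu_t(R_{n,r}) \le C c_t^{\,n+1} r^{n+1}$, hence $\tfrac{1}{r^n}\mu_t(R_{n,r}) \le C c_t^{\,n+1} r \to 0$ as $r\to 0$, for every fixed $t>0$. That is exactly convergence to $0$ in the measure topology.

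The step I expect to be the main obstacle is making the remainder estimate $\sup_{|u|\le\delta}|g_n(u)| = O(\delta^{n+1})$ genuinely uniform and clean: one must be a little careful because $p/2$ need not be an integer, so $f$ is only finitely differentiable at $s=-1$, but on the compact $[-1/2,1/2]$ it is real-analytic and all its derivatives are bounded, so Taylor's formula applies without trouble; one just needs to record that the constant $C$ depends on $n$ (which is harmless since $n$ is fixed throughout). A minor point to check is that all the operators involved ($Y_r$, its powers $Y_r^j$, and $|1+rX|^p$) genuinely lie in the $*$-algebra of $\tau$-measurable operators so that the measure-topology statement makes sense: this follows since $X$, being in some $L_q$ or merely affiliated with finite $\MM$, is $\tau$-measurable, and the measurable operators form a topological $*$-algebra, so all the algebraic combinations and the bounded continuous function $f$ of the self-adjoint measurable operator $Y_r$ stay in this algebra.
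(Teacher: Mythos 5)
Your argument is correct, but it runs along a different line than the paper's own proof of this lemma. The paper first shows $M:=\sup_{r<1}\mu_t(Y_r/r)<\infty$, then cuts by the spectral projection $E=E_{[-M,M]}(Y_r/r)$ (so that $\tau(\un-E)\le t$ and $\|Y_rE\|\le rM<1$), expands $(\un+Y_r)^{p/2}E$ as a norm-convergent binomial series and bounds the whole tail $\sum_{j\ge n+1}\bigl|\binom{\nicefrac p2}{j}\bigr| r^{-n}(Mr)^j$, concluding directly from the projection definition of $\mu_t$. You instead write the remainder as $g_n(Y_r)$ with $g_n(s)=(1+s)^{p/2}-\sum_{j=0}^n\binom{\nicefrac p2}{j}s^j$, invoke the singular-number inequality $\mu_t(f(T))\le\sup_{|u|\le\mu_t(T)}|f(u)|$ together with the Taylor bound $|g_n(u)|\le C|u|^{n+1}$ on $[-1/2,1/2]$, and feed in $\mu_t(Y_r)\le c_t r$; this is in fact exactly the mechanism the paper reserves for the subsequent domination lemma (its Lemma on the bound needed for Fack--Kosaki dominated convergence), where the same function $f$ and the same inequality appear with the weaker estimate $|f(u)|\le C|u|^n$. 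What your route buys is a shorter proof with an explicit rate, $\mu_t\bigl(r^{-n}(|\un+rX|^p-\sum_{j\le n}\binom{\nicefrac p2}{j}Y_r^j)\bigr)=O(r)$ for each fixed $t$, at the cost of the (harmless) care you note about extending $g_n$ left of $-1$ and restricting to $\mu_t(Y_r)\le 1/2$; the paper's projection argument avoids any Taylor-remainder estimate and uses only norm convergence of the binomial series on the compressed algebra. The loose line ``$\sup_{|u|\le 1/2}|g_n(u)|\le C\sup_{|u|\le 1/2}|u|^{n+1}$'' should be stated as the pointwise bound $|g_n(u)|\le C|u|^{n+1}$ for $|u|\le 1/2$, which is what you actually use; with that phrasing the proof is complete.
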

\begin{proof}%
We first claim that for all $t>0$, $\sup_{r<1} \mu_t(Y_r/r)<
\infty$. Indeed, from \eqref{eq=inegalite_sing_num_add}, we have:
\begin{eqnarray*} \mu_{2t}(Y_r/r) & \leq & \mu_t(X + X^*) + \mu_t(
r X^* X)\\
& = & \mu_t(X + X^*) + r \mu_t( X^* X)
\end{eqnarray*}
The claim follows from the fact that $\mu_t(x)<\infty$ for all
closed densely defined operator $x$.

Fix now $t>0$ and take $M = \sup_{r<1} \mu_t(Y_r/r)$. Then
(Proposition 2.2 in \cite{MR840845}) if $E = E_{[-M,M]}(Y_r/r)$ and
$r<1$, we have $\tau(\un - E) \leq t$ and by the functional
calculus, since $Y$ and $E$ commute and are self-adjoint,
\[ (1+Y_r)^{\nicefrac p 2}E = (E+Y_rE)^{\nicefrac p 2} E
   = \sum_{j\geq 0} \binom{\nicefrac p 2}{j} (Y_rE)^j E= \sum_{j\geq 0}
\binom{\nicefrac p 2}{j} Y_r^j E.\]
The previous series converges in the operator norm topology if $r
M<1$, since in that case, $\|Y_rE\|\leq r M <1$. Then
\begin{eqnarray*}
\left\|\frac{1}{r^n}\left( |S_{rz}|^p - \sum_{j=0}^n \binom{\nicefrac
p 2}{j} Y_r^j\right) E \right\| 
& = & \left\|\frac{1}{r^n}\left( (1+Y_r)^{\nicefrac p 2} -
\sum_{j=0}^n \binom{\nicefrac p 2}{j} Y_r^j\right) E \right\| \\ 
& = & \left\|\sum_{j \geq n+1} r^{-n} \binom{\nicefrac p 2}{j} 
(Y_rE)^j \right \|\\
& \leq & \sum_{j \geq n+1} \left|\binom{\nicefrac p 2}{j} \right|
r^{-n} (M r)^j \rightarrow 0.
\end{eqnarray*}

This proves that $\mu_t\left( r^{-n}\left( |\un + r X|^p - \sum_{j=0}^n
\binom{\nicefrac p 2}{j} Y_r^j\right)\right)$ tends to zero as $r
\rightarrow 0$ for every $t>0$. This concludes the proof.
\end{proof}

Let us denote by $Q_n$ the linear projection from the space of complex
polynomial $\C[r]$ to the subspace $\C_n[r]$ of the polynomials of degree
at most $n$ given by: $Q_n(r^k) = r^k$ if $r \leq n$ and $Q_n(r^k) = 0$ if
$k >n$. If $V$ is any vector space over the field of complex numbers, this
projection naturally extends to the space of polynomials with coefficients
in $V$ (this extension if simply the tensor product map $\id \otimes Q_n$
if one identifies the space of polynomials with coefficients in $V$ with
the tensor product $V \otimes \C[r]$). For simplicity this extension will
still be denoted by $Q_n$. The following result follows from Lemma
\ref{thm=convergence_en_mesure_cas_NC}:

\begin{corollaire} 
\label{thm=convergence_en_mesure_cas_NC2}
Let $X$ and $Y_r$ be as above (for any $r>0$). Then as  $r \rightarrow 0$,
\begin{equation*}
  \frac{1}{r^n}\left( |\un + rX|^p - Q_n\left(\sum_{j=0}^n \binom{\nicefrac p 2}{j}   Y_r^j\right) \right) \rightarrow 0.
\end{equation*}
\end{corollaire}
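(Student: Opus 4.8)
The plan is to derive the Corollary from Lemma~\ref{thm=convergence_en_mesure_cas_NC} by checking that the part of $P_r := \sum_{j=0}^n \binom{\nicefrac p 2}{j} Y_r^j$ which is removed by $Q_n$ becomes negligible in the measure topology after division by $r^n$.

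First I would expand each $Y_r^j = (rX + rX^* + r^2 X^*X)^j$ as a polynomial in $r$: one gets $Y_r^j = \sum_{k=j}^{2j} r^k P_{j,k}$, where the operators $P_{j,k}$ are finite sums of words in $X$ and $X^*$ and, crucially, do not depend on $r$. Since $\tau$ is finite, $X$ is $\tau$-measurable, and as the $\tau$-measurable operators form a $*$-algebra each $P_{j,k}$ is again $\tau$-measurable; in particular $\mu_t(P_{j,k}) < \infty$ for all $t>0$. Hence $P_r$ is a polynomial in $r$ of degree at most $2n$ with $\tau$-measurable coefficients, and
\[
P_r - Q_n(P_r) = \sum_{k=n+1}^{2n} r^k R_k,
\]
where each $R_k = \sum_{j}\binom{\nicefrac p 2}{j} P_{j,k}$ is a fixed $\tau$-measurable operator not depending on $r$.

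Next I would divide by $r^n$ to get $r^{-n}(P_r - Q_n(P_r)) = \sum_{k=n+1}^{2n} r^{k-n} R_k$. For each $k$ in that range $k-n \ge 1$, so using $\mu_t(\lambda T) = |\lambda|\mu_t(T)$ we get $\mu_t(r^{k-n} R_k) = r^{k-n}\mu_t(R_k) \to 0$ as $r \to 0$, for every $t>0$; that is, $r^{k-n} R_k \to 0$ in the measure topology. Since that topology is a vector-space topology, the finite sum $\sum_{k=n+1}^{2n} r^{k-n} R_k$ tends to $0$ as well.

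Finally, from the identity
\[
\frac{1}{r^n}\bigl(|\un + rX|^p - Q_n(P_r)\bigr) = \frac{1}{r^n}\bigl(|\un + rX|^p - P_r\bigr) + \frac{1}{r^n}\bigl(P_r - Q_n(P_r)\bigr),
\]
the first term on the right tends to $0$ by Lemma~\ref{thm=convergence_en_mesure_cas_NC} and the second by the previous paragraph, so the left-hand side tends to $0$ by continuity of addition in the measure topology. There is essentially no obstacle here; the only points worth a word are that the coefficients $R_k$ are genuine $\tau$-measurable operators --- so that multiplying them by the vanishing scalar $r^{k-n}$ drives them to $0$ in measure --- and that the measure topology is linear, which is exactly what lets the two limits be added.
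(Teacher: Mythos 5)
Your proof is correct and follows exactly the route the paper takes: you write the discarded part $P_r - Q_n(P_r)$ as a sum of terms $r^k R_k$ with $k>n$ and fixed $\tau$-measurable coefficients, note that $r^{k-n}R_k \to 0$ in the measure topology, and combine this with Lemma \ref{thm=convergence_en_mesure_cas_NC} using linearity of that topology. The paper's proof is the same observation stated more briefly.
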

\begin{proof}
It follows immediately from Lemma \ref{thm=convergence_en_mesure_cas_NC}
and from the fact that if $T$ is affiliated with $(\MM,\tau)$ and $k>n$,
then 
\[ \frac{1}{r^n} r^k T \rightarrow 0 \textrm{ in the measure topology as }
r \rightarrow 0.\]
\end{proof}

The next step is to get a domination result necessary to apply Fack
and Kosaki's dominated convergence theorem. More precisely, we prove:
\begin{lemma}
\label{thm=domination_cas_NC}
With the same notation as above, there are constants $C$ and $K$ depending
only on $p$ and $n$ such that for all $r<1$ and all $0<t \leq 1$,
\begin{equation}
\label{eq=domination_pour_TCD_fackkosaki}
\mu_t \left( \frac{1}{r^n}\left( |\un + r X|^p - Q_n(\sum_{j=0}^n
      \binom{\nicefrac p 2}{j} Y_r^j)\right) \right) \leq 
C (\mu_{t/K}(X)^n + \mu_{t/K}(X)^p)
\end{equation}
\end{lemma}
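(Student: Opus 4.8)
The plan is to recognize the operator inside $\mu_t$ as a normalized Taylor remainder and to estimate it through a resolvent representation of the fractional power, which remains bounded even when $X$ is unbounded. Write $Y_r=rW+r^2V$ with $W:=X+X^*$ and $V:=X^*X\ge0$. Since only the terms $j\le n$ of $\sum_{j\ge0}\binom{\nicefrac p 2}{j}Y_r^j$ contribute to $r$-degree at most $n$, the polynomial $Q_n\paren{\sum_{j=0}^n\binom{\nicefrac p 2}{j}Y_r^j}$ is exactly the order-$n$ Taylor polynomial at $r=0$ of $F(r):=|\un+rX|^p=\Phi(r)^{\nicefrac p 2}$, where $\Phi(r):=(\un+rX)^*(\un+rX)=\un+rW+r^2V$ (this is the formal computation behind Corollary~\ref{thm=convergence_en_mesure_cas_NC2}), so the operator in the lemma is $r^{-n}$ times the order-$n$ Taylor remainder of $F$. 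To handle that remainder uniformly for all $r<1$ I would not differentiate $u\mapsto u^{\nicefrac p 2}$ directly (since $\Phi(r)$ need not be bounded away from $0$), but fix an integer $k_0>\nicefrac p 2$ and use $u^{\nicefrac p 2}=c\int_0^\infty\paren{1-\lambda(u+\lambda)^{-1}}^{k_0}\lambda^{\nicefrac p 2-1}\,d\lambda$ (valid for $u\ge0$), giving $F(r)=c\int_0^\infty\paren{\un-\lambda R_\lambda(r)}^{k_0}\lambda^{\nicefrac p 2-1}\,d\lambda$ with $R_\lambda(r):=(\Phi(r)+\lambda)^{-1}$. For each fixed $\lambda>0$ this integrand is a smooth \emph{bounded}-operator-valued function of $r\in\R$ (because $\Phi(r)+\lambda\ge\lambda\un$), so its Taylor remainder has the classical integral form $\tfrac{1}{n!}\int_0^r(r-s)^n\,\tfrac{d^{n+1}}{ds^{n+1}}\paren{\un-\lambda R_\lambda(s)}^{k_0}\,ds$; substituting $s=r\sigma$ there extracts an overall factor $r$, which is what absorbs $r^{-n}$. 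One is then left with $r$ times an integral over $\sigma\in[0,1]$ and $\lambda\in(0,\infty)$ (against the weight $(1-\sigma)^n\lambda^{\nicefrac p 2-1}$) of a finite sum of terms $\pm\,\lambda\,R_\lambda(r\sigma)\,B_1R_\lambda(r\sigma)\cdots B_qR_\lambda(r\sigma)$, where each $B_i$ is one of the derivative factors $\Phi'(s)=W+2sV$, $\Phi''(s)=2V$ produced by the $(n+1)$ differentiations, with $\lceil\tfrac{n+1}{2}\rceil\le q\le n+1$ and total ``$X$-content'' of the $B_i$ equal to $n+1$.

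Bounding $\mu_t$ of such a term then uses the resolvent estimates $\|R_\lambda(s)\|\le\lambda^{-1}$, $\|\Phi(s)R_\lambda(s)\|\le1$, $\|\Phi(s)^{1/2}R_\lambda(s)\|=\|R_\lambda(s)\Phi(s)^{1/2}\|\le\tfrac{1}{2}\lambda^{-1/2}$ to balance the powers of $\lambda$ against the resolvents (so that the $\lambda$-integral converges: at $\lambda\to0$ since $\nicefrac p 2>0$, at $\lambda\to\infty$ thanks to $k_0>\nicefrac p 2$), together with the singular-number inequalities \eqref{eq=inegalite_sing_num_adjoint}, \eqref{eq=inegalite_sing_num_add}, \eqref{eq=inegalite_sing_num_mult} and the estimates $\mu_\tau(W)\le2\mu_{\tau/2}(X)$, $\mu_\tau(V)=\mu_\tau(X)^2$, $\mu_\tau(|\un+sX|)\le1+s\mu_{\tau/2}(X)$ applied to the $B_i$. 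Carrying this out and using $r<1$, the two surviving powers of $\mu_{t/K}(X)$ should be exactly $n$ — from the main part of the $\lambda$-integral, where one resolvent is absorbed by a $\Phi^{1/2}$-bound and the $n+1$ factors of $X$, $X^*$ collapse to $\mu_{t/K}(X)^n$ — and $p$ — from the large-$\lambda$ tail, where the integral reconstitutes an $|\un+rX|^p$-sized contribution, dominated by $\mu_{2t}(|\un+rX|)^p\le(1+\mu_t(X))^p\le2^p\paren{1+\mu_t(X)^p}$.

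I expect the genuine difficulty to be exactly this last bookkeeping — checking that \emph{only} the extreme powers $\mu_{t/K}(X)^n$ and $\mu_{t/K}(X)^p$ come out. The apparently more natural route, namely splitting the operator as $r^{-n}\paren{|\un+rX|^p-\sum_{j\le n}\binom{\nicefrac p 2}{j}Y_r^j}+r^{-n}\paren{\sum_{j\le n}\binom{\nicefrac p 2}{j}Y_r^j-Q_n(\cdots)}$ as Corollary~\ref{thm=convergence_en_mesure_cas_NC2} suggests, is too lossy: each summand separately produces singular-number weights $\mu_{t/K}(X)^m$ with $m$ as large as $2n$, and these cancel only when the Taylor remainder is kept whole. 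This is also why one should use a representation valid globally in $r$ rather than a spectral cut-off (of $\Phi(r)$, or of $|X|$): any such cut-off leaves an uncompensated factor $r^{-n}$ on the small-trace complementary subspace. A secondary, routine point is the justification, for unbounded $X$, of differentiating under the $\lambda$-integral and of the resolvent Taylor expansion; one can either work throughout with the bounded operators $R_\lambda(r)$, or first establish the estimate for the bounded truncations $X_j:=X\,E_{[0,j]}(|X|)\wedge E_{[0,j]}(|X^*|)$ (for which $\mu_\tau(X_j)\le\mu_\tau(X)$), then let $j\to\infty$, using lower semicontinuity of $\mu_t$ under convergence in measure.
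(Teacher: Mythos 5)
The decisive estimate is exactly the one you postpone, and the route you sketch hits a real obstruction before the bookkeeping even begins: after the resolvent representation and the integral form of the Taylor remainder, you must bound $\mu_t$ of a double integral (over $\lambda$ and $\sigma$) of operator-valued terms that are not positive. The only subadditivity available is $\mu_{t+s}(T+S)\le\mu_t(T)+\mu_s(S)$, which requires distributing the index budget $t$ among the summands; for a continuum of summands this yields nothing, so you cannot simply pull $\mu_t$ inside the $\lambda$- and $\sigma$-integrals. Nor can you instead bound the integral in operator norm after multiplying by a single projection of co-trace at most $t$: the unbounded factors $W=X+X^*$ and $V=X^*X$ sit \emph{between} resolvents, so cutting off on one side does not make the integrand bounded. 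The paper avoids this entirely by keeping the true remainder as a function $f(Y_r)$ of the \emph{single self-adjoint} operator $Y_r$, with $f(x)=(1+x)^{p/2}-\sum_{k\le n}\binom{\nicefrac p2}{k}x^k$, and then using the functional-calculus inequality $\mu_t(f(T))\le\sup_{|u|\le\mu_t(T)}|f(u)|$ together with $|f(x)|\le C_1|x|^n$ for $|x|\le3$ and $\mu_{t/2^{2n+1}}(Y_r)\le3r\mu_s(X)$; your scheme has no substitute for this step. Moreover your own sample bounds suggest the bookkeeping does not close as stated: the large-$\lambda$ tail estimate $2^p\left(1+\mu_t(X)^p\right)$ contains a constant term not allowed by the right-hand side $C\left(\mu_{t/K}(X)^n+\mu_{t/K}(X)^p\right)$, and the total $X$-content of the derivative factors is not $n+1$ but can be as large as roughly $2(n+1)$, so extracting exactly the powers $n$ and $p$ is not a routine verification.

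Your reason for rejecting the ``natural'' splitting is also mistaken: the paper uses precisely that splitting, and it is not lossy once one distinguishes two regimes with $s=t/K$. If $r\mu_s(X)\ge1$, everything is estimated crudely term by term: the polynomial $Q_n$ only contains degrees $k\le n$, giving $r^{k-n}\mu_s(X)^k\le\mu_s(X)^n$, while $\mu_{t/2^{n+1}}(|\un+rX|^p)\le(1+r\mu_s(X))^p\le2^p(r\mu_s(X))^p\le r^n2^p\left(\mu_s(X)^n+\mu_s(X)^p\right)$ using $r<1$ and $r\mu_s(X)\ge1$. If $r\mu_s(X)<1$, the extra terms of degree $n<k\le2n$ coming from $\sum_{j\le n}\binom{\nicefrac p2}{j}Y_r^j-Q_n(\cdots)$ satisfy $r^k\mu_s(X)^k\le r^n\mu_s(X)^n$, and the genuine Taylor remainder is handled by the scalar estimate recalled above. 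No cancellation between the two pieces is ever needed. As it stands, your proposal leaves the essential estimate unproved and rests on a representation to which the singular-number machinery does not directly apply; the elementary case analysis on $r\mu_s(X)$ is the proof you should write.
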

\begin{proof}
Denote by $m_{t,r}$ the left-hand side of
\eqref{eq=domination_pour_TCD_fackkosaki}:
\[ m_{t,r} \egdef \mu_t \left( \frac{1}{r^n}\left( |\un + r X|^p -
    Q_n(\sum_{j=0}^n \binom{\nicefrac p 2}{j} Y_r^j)\right)
\right).\]

Fix an integer $K$ such that $K \geq 2n 2^{2n+1}$ and $K \geq 3
2^{2n+1}$. Define a real number $s$ by $s = t/K$.  To prove that $m_{t,r}
\leq C (\mu_s(X)^n + \mu_s(X)^p)$, we consider two cases, depending on the
value of $ r \mu_s(X)$.

First assume that $r \mu_s(X) \geq 1$.  

Note that there are some real numbers $\lambda_{k,\varepsilon}$ indexed by
the integers $k \geq 0 $ and $\varepsilon =
(\varepsilon_1,\varepsilon_2,\dots \varepsilon_k) \in \{1,*\}^k$ such that
for any $r>0$ (and any $n$),
\[ Q_n\left(\sum_{j=0}\sp n \binom{\nicefrac p 2}{j} Y_r^j\right) =
\sum_{k=0}^n \sum_{\varepsilon \in \{1,*\}^k} \lambda_{k,\varepsilon} r^k
X^{\varepsilon_1} X^{\varepsilon_2} \dots X^{\varepsilon_k}.\]

Thus, using \eqref{eq=inegalite_sing_num_add} $2^{n+1}$ times, one gets
\begin{eqnarray*}
  r^n m_{t,r} & \leq & \mu_{t/2^{n+1}}(|1+rX|^p) + \sum_{k=0}^n
  \sum_{\varepsilon \in \{1,*\}^k} |\lambda_{k,\varepsilon}| r^k
  \mu_{t/2^{n+1}}(  X^{\varepsilon_1}   X^{\varepsilon_2} \dots
  X^{\varepsilon_k})
\end{eqnarray*}

Since $t/2^{n+1} \geq t/K =s$, we have that
\begin{eqnarray*}
\mu_{t/2^{n+1}}(|1+rX|^p) &\leq& \mu_s(|1+rX|^p)\\
& = & \mu_s(|1+rX|)^p\\
& = & \mu_s(1+rX)^p\\
& \leq & (1+r\mu_s(X))^p\\
& \leq & 2^p (r\mu_s(X))^p\\
& \leq & \left\{
\begin{array}{cc} r^n 2^p \mu_s(X)^p & \textrm{if }p
    \geq n\\
r^n 2^p \mu_s(X)^n & \textrm{if }p \leq  n
\end{array} \right.
\end{eqnarray*}
In these computations, the fact that $\mu_s(f(T)) = f(\mu_s(T))$ for any
operator $T \geq 0$ and any continuous increasing function on $\R$ with
$f(0)=0$ was used, together with the assumption $1 \leq r \mu_s(X)$. 

From \eqref{eq=inegalite_sing_num_mult} and
\eqref{eq=inegalite_sing_num_adjoint}, we get, for $0 \leq k \leq n$,
\[\mu_{t/2^{n+1}}(X^{\varepsilon_1} X^{\varepsilon_2} \dots
X^{\varepsilon_k}) \leq \mu_{t/(k 2^{n+1})}(X)^k.\]
Since $t/(k 2^{n+1}) \geq t/K =s$, we have that
\[\frac 1 {r^n} r^k \mu_{t/2^{n+1}}(X^{\varepsilon_1} X^{\varepsilon_2} \dots
X^{\varepsilon_k}) \leq r^{k-n} \mu_s(X)^k \leq \mu_s(X)^n .\]

This concludes the proof that $m_{t,r} \leq C \left(\mu_s(X)^p +
  \mu_s(X)^n\right)$ for some $C$, in the case when $r \mu_s(X) \geq
1$.

Let us now assume that $r \mu_s(X) < 1$. We want to prove that in that
case, there is a constant $C$ not depending on $r$ and $t$ such that 
\begin{equation}
\label{eq=domination_a_prouver_cas_rmu_petit}
r^n m_{r,t} \leq C r^n \mu_s(X)^n.
\end{equation}

In the same way as above, write 
\begin{multline*} |\un + r X|^p - Q_n(\sum_{j=0}^n \binom{\nicefrac p 2}{j}
  Y_r^j) = \\ |\un + r X|^p - \sum_{j=0}^n \binom{\nicefrac p 2}{j}
  Y_r^j + \sum_{k=n+1}^{2n} \sum_{\varepsilon \in \{1,*\}^k} {\widetilde
    \lambda}_{k,\varepsilon} r^k X^{\varepsilon_1} X^{\varepsilon_2} \dots
  X^{\varepsilon_k},
\end{multline*}
for some real numbers ${\widetilde \lambda}_{k,\varepsilon}$ depending
neither on $r$ nor on $t$.

Again, using \eqref{eq=inegalite_sing_num_add}, one gets
\begin{multline*} 
  r^n m_{r,t} \leq \mu_{t/2^{2n+1}}\left(|\un + r X|^p - \sum_{j=0}^n
    \binom{\nicefrac p 2}{j} Y_r^j\right) +\\ \sum_{k=n+1}^{2n}
  \sum_{\varepsilon \in \{1,*\}^k} {\widetilde \lambda}_{k,\varepsilon} r^k
  \mu_{t/2^{2n+1}}(X^{\varepsilon_1} X^{\varepsilon_2} \dots
  X^{\varepsilon_k}).
\end{multline*}

The second term is easy to dominate using that $r \mu_s(X) < 1$ and that if
$n<k\leq 2n$, then $t/k2^{2n+1} \geq t/K =s$:
\[\mu_{t/2^{2n+1}}(X^{\varepsilon_1} X^{\varepsilon_2} \dots
  X^{\varepsilon_k}) \leq \mu_{t/k2^{2n+1}}(X)^k \leq \mu_s(X)^k \leq
  r^{n-k} \mu_s(X)^n.\]

For the first term, we use \eqref{eq=inegalite_sing_num_fonction} for $T =
Y_r$ and $f(x) = (1+x)^{p/2} - \sum_{k=0}^n \binom{\nicefrac p 2}{k} x^k$
(if $x \geq -1$, and say $f(x)=f(-1)$ else). Indeed, we have that $f(x) =
o(x^n)$ as $x \rightarrow 0$, in particular there is a constant $C_1$ such
that $|f(x)| \leq C_1 |x|^n$ if $|x| \leq 3$. If one proves that
$\mu_{t/2^{2n+1}}(Y_r) \leq 3 r \mu_s(X)$, we thus have that
\[\mu_{t/2^{2n+1}}\left(|\un + r X|^p - \sum_{j=0}^n
    \binom{\nicefrac p 2}{j} Y_r^j\right) \leq C_1 3^n r^n \mu_s(X)^n, \]
which would complete the proof of
\eqref{eq=domination_a_prouver_cas_rmu_petit}.

We are left to prove that $\mu_{t/2^{2n+1}}(Y_r) \leq 3 r \mu_s(X)$. But
since $t/2^{2n+1} \geq 3 t/K$, we have that $\mu_{t/2^{2n+1}}(Y_r) \leq
\mu_{3s}(Y_r)$, and thus using
 \eqref{eq=inegalite_sing_num_add}, one gets
 \begin{eqnarray*}
   \mu_{t/2^{2n+1}}(Y_r) \leq \mu_{3s}(r^2 X^* X + rX + rX^*) & \leq &
   r^2\mu_{s}(X^* X) + r \mu_s(X) + r \mu_s(X^*) \\
& = &(r\mu_r(X))^2 +2r \mu_s(X) \leq 3r\mu_s(X)
\end{eqnarray*}

\end{proof}

It is now possible to use Fack and Kosaki's dominated convergence
theorem \cite[Theorem 3.6]{MR840845} to prove the main result of
this part, which is the unbounded version of Lemma
\ref{thm=egalite_entre_normeP_et_trace_avec_adjoint}:

\begin{lemma}
\label{thm=egalite_entre_normeP_et_trace_avec_adjoint_nb}
Let $0<p<\infty$. Assume that $x_1 \dots x_n \in L_p(\MM,\tau)$ and take
$\varepsilon_1, \dots \varepsilon_n \in \left\{1,*\right\}$. If $x_j \in
L_{n}(\MM,\tau)$ for all $j$, then the trace $\tau(x_1^{\varepsilon_1} \dots
x_n^{\varepsilon_n})$ ``can be computed from the $p$-norms'' of linear
combinations of the $x_i$'s with coefficients in $\Mm$ for some $m$.

More precisely, let $m \in \N$ and $a_1,\dots a_n \in \Mm$ satisfying
\eqref{eq=propriete_combinatoire_des_eij} (as explained in Remark
\ref{rem=propriete_combinatoire_des_eij}, such a choice of the $a_j$'s
can be achieved for any $m \geq n/2$). Define $\alpha$ as in Lemma
\ref{thm=lemme_combinatoire} and denote $\forall z \in \C^n$
\[S_z = \un + \sum_{j=1}^n z_j a_j^{\varepsilon_j}\otimes x_j.\]

Let $Z = (Z_1,\dots Z_n)$ be a (classical) $\C^n$-valued random
variable where the $Z_j$'s are uniformly distributed in $\left\{\exp(2i
k \pi/3), k=1,2,3\right\}$ and independent. Denote by $\mathbb{E}$ the
expected value with respect to $Z$. Then

\begin{equation}
\label{eq=egalite_entre_normeP_et_trace_avec_adjoint_non_borne}
\frac{1}{r^n} \mathbb{E} \left[ \| S_{r Z}\|_p^p \prod_{j=1}^n
  \overline{Z_j}^{\varepsilon_j} \right] \xrightarrow{r \rightarrow 0}
  \tau(x_1^{\varepsilon_1} x_2^{\varepsilon_2}\dots x_n^{\varepsilon_n
  }) \sum_{k=0}^\alpha (n-k) \binom{\nicefrac p
  2}{n-k}\binom{\alpha}{k}.
\end{equation}
\end{lemma}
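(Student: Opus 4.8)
The plan is to mimic the proof of Lemma~\ref{thm=egalite_entre_normeP_et_trace_avec_adjoint}, replacing the norm-convergent power series expansion of $|S_z|^p$ by the measure-topology expansion already established in the unbounded setting, and then promoting that measure convergence to convergence of traces (i.e.\ of $p$-norms) via the Fack--Kosaki dominated convergence theorem. First I would fix the random variable $Z$ and observe that for each fixed $r$, taking the expectation $\mathbb E[\,\cdot\,\prod_j \overline{Z_j}^{\varepsilon_j}]$ of a polynomial in the $Z_j,\overline{Z_j}$ extracts precisely the coefficient of $\prod_j z_j^{\varepsilon_j}$: since each $Z_j$ is uniform on the cube roots of unity, $\mathbb E[Z_j^{a}\overline{Z_j}^{b}]$ is $1$ when $a-b\equiv 0 \pmod 3$ and $0$ otherwise, so monomials of low total degree (here the relevant ones all have $Z_j$-degree and $\overline{Z_j}$-degree at most $2$, from $S_z^*S_z$) survive exactly when $k_j-l_j = \varepsilon_j$ in the notation of the earlier lemma. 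This is the unbounded analogue of the Fourier-coefficient integral $\tfrac{1}{(2\pi)^n}\int \cdots \prod_j e^{-i\theta_j \varepsilon_j}$.

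Next I would apply Corollary~\ref{thm=convergence_en_mesure_cas_NC2} to the operator $X = \sum_j z_j a_j^{\varepsilon_j}\otimes x_j$ inside $\M m \MM$ (with its trace $\tau^{(m)}$, to which $X$ is affiliated since $x_j\in L_p\subset L_n$ — note the remark after Lemma~\ref{thm=lemme_combinatoire} that the combinatorial computation survives the passage to $L_n$): for each fixed $z$ on the torus $|z_j|=1$,
\[
\frac{1}{r^n}\left( |\un + rX|^p - Q_n\!\left(\sum_{j=0}^n \binom{\nicefrac p 2}{j} Y_r^j\right)\right) \xrightarrow[r\to 0]{} 0
\]
in measure topology, where $Y_r = |\un+rX|^2-\un$. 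The point of subtracting the $Q_n$-truncated polynomial is that, after integrating against $\prod_j \overline{Z_j}^{\varepsilon_j}$, the truncated polynomial contributes exactly $r^n$ times the coefficient $\gamma_j$-sum computed in Lemma~\ref{thm=lemme_combinatoire} (the terms of degree $>n$ in $r$ die in the limit, which is why $Q_n$ is harmless, and the terms of degree $<n$ in $r$ have the wrong balance of $Z_j$ versus $\overline{Z_j}$ and are killed by $\mathbb E$). Concretely, the right-hand side of \eqref{eq=egalite_entre_normeP_et_trace_avec_adjoint_non_borne} is $\sum_{j} \binom{\nicefrac p 2}{j}\gamma_j = \tau(x_1^{\varepsilon_1}\cdots x_n^{\varepsilon_n})\sum_j j\binom{\nicefrac p 2}{j}\binom{\alpha}{n-j}$, which equals the stated sum $\tau(\cdots)\sum_{k=0}^\alpha (n-k)\binom{\nicefrac p 2}{n-k}\binom{\alpha}{k}$ after reindexing $k = n-j$.

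The main obstacle is exchanging the limit $r\to 0$ with the trace (equivalently with the $L_1$-norm, since $\|T\|_1 = \int_0^1 \mu_t(T)\,dt$), and here is where Lemma~\ref{thm=domination_cas_NC} does the work: it provides, for each fixed $z$ on the torus, a dominating function
\[
\mu_t\!\left(\frac{1}{r^n}\Big(|\un+rX|^p - Q_n(\textstyle\sum_j \binom{\nicefrac p 2}{j}Y_r^j)\Big)\right) \le C\left(\mu_{t/K}(X)^n + \mu_{t/K}(X)^p\right),
\]
and the right-hand side is in $L_1([0,1],dt)$ because $x_j\in L_n(\MM,\tau)$ forces $X\in L_n(\M m \MM)$, hence $\mu_{t/K}(X)^n$ is integrable, while $x_j\in L_p$ gives integrability of $\mu_{t/K}(X)^p$ (a change of variables $t\mapsto Kt$ only costs a constant, and $\mu$ vanishes past $t=1$). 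By Fack--Kosaki \cite[Theorem 3.6]{MR840845}, the measure convergence above together with this uniform (in $r<1$) $L_1$-domination yields $L_1$-convergence, i.e.\ convergence of $\tau^{(m)}$ of the expression. Applying $\mathbb E[\,\cdot\,\prod_j\overline{Z_j}^{\varepsilon_j}]$ — a finite average over $3^n$ values of $z$, so it commutes with the $r\to 0$ limit trivially — finishes the proof. One small point to check along the way is that $\|S_{rZ}\|_p^p = \tau^{(m)}(|S_{rZ}|^p)$ is genuinely finite for $r$ small, which follows since $|\un+rX|^p$ differs from the polynomial $Q_n(\sum_j\binom{\nicefrac p2}{j}Y_r^j)\in L_1$ by a quantity dominated as above.
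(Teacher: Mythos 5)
Your proposal is correct and follows essentially the same route as the paper's own proof: measure convergence from Corollary \ref{thm=convergence_en_mesure_cas_NC2}, the domination of Lemma \ref{thm=domination_cas_NC} made integrable by $X\in L_n\cap L_p$, Fack--Kosaki's dominated convergence to pass to traces, the cube-roots-of-unity expectation to extract the coefficient of $\prod_j z_j^{\varepsilon_j}$ (commuting trivially with $r\to 0$ since it is a finite average), and Lemma \ref{thm=lemme_combinatoire} to evaluate that coefficient. The only slight imprecision is your degree count in the coefficient-extraction step: the correct reason the mod-$3$ test isolates $(k^0,l^0)$ is that after the $Q_n$-truncation the total degree is at most $n$ while each $j$ with $k_j-l_j\equiv\pm1\pmod 3$ forces $k_j+l_j\geq 1$, which pins down $k_j+l_j=1$ for every $j$.
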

\begin{proof}
The first step of the proof consists in using Corollary
\ref{thm=convergence_en_mesure_cas_NC2} and Lemma
\ref{thm=domination_cas_NC} in the von Neumann algebra $(\M{m}{\MM},
\tau^{(m)})$ in order to apply \cite[Theorem 3.6]{MR840845}. Fix $z \in
\C^n$, and denote $Y_r = S_{rz}^* S_{rz} - \un$ (the dependence of $Y_r$
on $z$ is implicit). If $T_r = \frac{1}{r^n} \left( |S_{rz}|^p -
  Q_n\left(\sum_{j=0}^n \binom{\nicefrac p 2}{j} Y_r^j\right))\right)$,
then from Corollary \ref{thm=convergence_en_mesure_cas_NC2}, $T_r$ converges to
$0$ in measure, and from Lemma \ref{thm=domination_cas_NC}, $T_r$ is
dominated in the following way: there are positive constants $C$ and $K$
such that for any $0<t\leq 1$ and any $0<r<1$,
\begin{equation}
\mu_t(T_r) \leq C(\mu_{\nicefrac t K}(X)^p + \mu_{\nicefrac t K}(X)^n),
\end{equation}
where $X = \sum_{j=1}^n z_j a_j^{\varepsilon_j}\otimes x_j$. In
particular, $X \in L_p(\M{m}{\MM},\tau^{(m)})$ and $X \in
L_n(\M{m}{\MM},\tau^{(m)})$. To deduce that
\begin{equation}
\label{eq=consequence_du_tcd_non_commutatif}
 \frac{1}{r^n} \tau^{(m)} \left( |S_{rz}|^p - \sum_{j=0}^n 
 \binom{\nicefrac p 2}{j} Y_r^j\right) \rightarrow 0,
\end{equation}
it is thus sufficient to prove that the domination term $C(\mu_{\nicefrac t
  K}(X)^p + \mu_{\nicefrac t K}(X)^n)$ is (as a function of $t$), in
  $L_1(\R_+,dt)$ (see \cite[Theorem 3.6]{MR840845}). But this follows from
  the fact that, since $X \in L_p(\M{m}{\MM},\tau^{(m)})$ (resp. $X \in
  L_n(\M{m}{\MM},\tau^{(m)})$), the function $t \mapsto \mu_t(X)$ is in
  $L_p(\R_+,dt)$ (resp. $L_n(\R_+,dt)$.  This proves
  \eqref{eq=consequence_du_tcd_non_commutatif}.

  Now replace $z$ in \eqref{eq=consequence_du_tcd_non_commutatif} by the
  random variable $Z$ defined above, multiply by $\prod_{j=1}^n
  \overline{Z_j}^{\varepsilon_j}$ and take the expected value. Since $z$ is
  no longer fixed, $Y_r$ is denoted by $Y_r(z)$ to remember that $Y_r$
  depends on $z$. Since $Z$ only takes a finite number of values, equation
  \eqref{eq=consequence_du_tcd_non_commutatif} then becomes:
\[\frac{1}{r^n} \mathbb{E} \left[ \tau^{(m)}\left( \prod_{j=1}^n
  \overline{Z_j}^{\varepsilon_j} | S_{r Z}|^p - Q_n(\sum_{j=0}^n
 \binom{\nicefrac p 2}{j} \prod_{j=1}^n \overline{Z_j}^{\varepsilon_j}
 {Y_r(Z)}^j) \right)\right] \xrightarrow{r \to 0} 0.\]

Note that $Q_n(\sum_{j=0}^n \binom{\nicefrac p 2}{j} {Y_r(z)}^j)$ is, as a
function of $z = (z_1,\dots z_n)$, a polynomial in the $2n$ variables $z_i$
and $\overline{z_j}$ with coefficients in $L_1(\Mm\otimes\MM,\tau^{(m)})$
(this follows from H\"older's inequality and from the fact that $x_j \in
L_{n}(\MM,\tau)$). Moreover, if $P(z_1,\dots z_n)$ is such a polynomial,
\ie{}  $P(z)=\sum_{k,l \in \N^n, |k|+|l|\leq n} X_{k,l} z^k
\overline{z}^l$, then
\[\mathbb{E} \left[\prod_{j=1}^n \overline{Z_j}^{\varepsilon_j} P(Z)\right]
= X_{k^0,l^0},
\]
where $k^0 \in \N^n$ and $l^0\in \N^n$ are again defined by $k^0_j = 1$ if
$\varepsilon_j=1$, $k^0_j = 0$ else, and $l^0_j = 1 - k^0_j$. If
$D_{k^0,l^0}$ denotes the coefficient in front of $z^{k^0}
\overline{z}^{l^0}$ in $\sum_{j=0}^n \binom{\nicefrac p 2}{j} {Y_1(z)}^j$,
then one has:
\begin{eqnarray*}\mathbb{E} \left[ Q_n(\sum_{j=0}^n \binom{\nicefrac p
      2}{j}
\prod_{j=1}^n \overline{Z_j}^{\varepsilon_j} {Y_r(Z)}^j)\right] & = &
\mathbb{E} \left[Q_n(\sum_{j=0}^n \binom{\nicefrac p 2}{j}\prod_{j=1}^n
\overline{Z_j}^{\varepsilon_j} Y_1(rZ) )\right] \\ &=& r^n D_{k^0,l^0}. 
\end{eqnarray*}

Taking the trace $\tau^{(m)}$, dividing by $r^n$ and taking the limit as
 $r\to 0$ in \eqref{eq=consequence_du_tcd_non_commutatif}, one gets
\[\frac{1}{r^n} \mathbb{E} \left[ \prod_{j=1}^n
  \overline{Z_j}^{\varepsilon_j} \| S_{r Z}\|_p^p\right]
  \xrightarrow{r \rightarrow 0} \tau^{(m)}(D_{k^0,l^0}).\]

This shows
\eqref{eq=egalite_entre_normeP_et_trace_avec_adjoint_non_borne} since from
Lemma \ref{thm=lemme_combinatoire},
\[\tau^{(m)}(D_{k^0,l^0}) = \tau(x_1^{\varepsilon_1}
  x_2^{\varepsilon_2}\dots x_n^{\varepsilon_n })
  \sum_{k=0}^\alpha (n-k) \binom{\nicefrac p
  2}{n-k}\binom{\alpha}{k}.\]
\end{proof}

\subsection{Boundedness on $E \cap L_2$ of isometries on $E \subset L_p$}
In this subsection and in the next one, we study how isometric
properties for one $p$-norm imply boundedness (and isometric)
properties for the $q$-norms for $q \neq p$.

Here we first show that a unital map which is isometric between
subspaces of \nc{} $L_p$-spaces for $1\leq p<\infty$ is also
isometric for the $2$-norm. This is a \nc{} analogue of
\cite[Proposition 1]{MR0169081}, where the author proves that a unital
isometry between subspaces of commutative probability $L_p$-spaces is
also an isometry for the $L_2$-norm, and our proof is inspired by
Forelli's proof. 

Then in Theorem \ref{thm=equivalence_entre_dans_L2n_et_image}, we will
prove that any unital and $2$-isometric map between subspaces of \nc{}
$L_p$-spaces for $0<p<\infty$, $p \notin 2\N$ is also isometric for the
$2n$-norm for any $n \in \N \cup \{\infty\}$.

\begin{thm}
\label{thm=isometrie_Lp_est_isom_sur_L2}
Let $(\MM,\tau)$ and $(\NN,\widetilde \tau)$ be as in the introduction,
and $1\leq p<\infty$.

Let $x \in L_p(\MM,\tau)$ and $y \in L_p(\NN,\widetilde \tau)$ such that
for any $z \in \C$,
\begin{equation}
\label{eq=hypothese_isom_sur_Lp_pour_avoir_L2}
 \|\un + z x\|_p = \|\un + z y\|_p.
\end{equation}
Then $\|x\|_2 < \infty$ if and only if $\|y\|_2 < \infty$, and
$\|x\|_2 = \|y\|_2$.
\end{thm}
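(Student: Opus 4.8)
The plan is to mimic Forelli's argument: use the hypothesis \eqref{eq=hypothese_isom_sur_Lp_pour_avoir_L2} for small complex $z$, expand both $\|\un+zx\|_p^p$ and $\|\un+zy\|_p^p$ to second order in $|z|$, and compare the quadratic terms. Concretely, I would first reduce to the case $0<p<\infty$ arbitrary in the bounded setting, or more directly work with the expansion already available from Lemma \ref{thm=egalite_entre_normeP_et_trace_avec_adjoint_nb} (or its bounded counterpart Lemma \ref{thm=egalite_entre_normeP_et_trace_avec_adjoint}) applied in the very simple case $n=1$: there the matrices $a_1$ can be taken to be $1\times 1$, $S_z = \un + zx$, and one reads off that the coefficient of $z$ (resp.\ $\bar z$) in the power-series expansion of $\|\un+zx\|_p^p$ recovers $\frac p2\,\tau(x)$ (resp.\ $\frac p2\,\tau(x^*)$). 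To get $\|x\|_2$ we instead need the coefficient of $|z|^2 = z\bar z$, so I would run the same expansion but track the $z\bar z$-term rather than a monomial $z^\varepsilon$ of total degree $1$.

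The key computation: writing $S_z^*S_z - \un = \bar z\, x^* + z\, x + |z|^2 x^*x$ and using $|S_z|^p = \sum_{k\ge0}\binom{p/2}{k}(S_z^*S_z-\un)^k$, the coefficient of $z\bar z$ in $\tau(|S_z|^p)$ is
\[
\binom{p/2}{1}\tau(x^*x) + \binom{p/2}{2}\bigl(\tau(x^*)\tau(x) \cdot 2 + \dots\bigr),
\]
but the terms coming from $k=2$ only involve $\tau(x)$ and $\tau(x^*)$ — i.e.\ only the first-order data — which by the $n=1$ case of the earlier lemma are \emph{already} equal for $x$ and $y$. Hence the hypothesis \eqref{eq=hypothese_isom_sur_Lp_pour_avoir_L2}, which forces equality of \emph{all} coefficients of the (two-variable) power series in $z,\bar z$, gives $\frac p2\,\tau(x^*x) = \frac p2\,\widetilde\tau(y^*y)$ after subtracting the known equal lower-order contributions, that is $\|x\|_2 = \|y\|_2$. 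The ``only if'' direction (finiteness of one $2$-norm forcing finiteness of the other) comes out of the same expansion: if $\|x\|_2<\infty$ then $x\in L_2\subset L_p$ and the argument applies; symmetrically for $y$. Alternatively, since $p\ge1$ and we may assume $x\in L_p$ with $\|x\|_2$ possibly infinite a priori, one observes that $\|\un+zx\|_p$ is finite for all $z$ by hypothesis on both sides, and then the second-order expansion in the measure-topology sense (Corollary \ref{thm=convergence_en_mesure_cas_NC2}, Lemma \ref{thm=domination_cas_NC}) shows the relevant limit $\lim_{r\to0}\frac1{r^2}(\dots)$ exists and equals $\frac p2\|x\|_2^2$, forcing finiteness when the corresponding limit on the $y$-side is finite.

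The main obstacle I anticipate is the bookkeeping for $p$ not an even integer when $x$ is genuinely unbounded: one cannot simply take the trace of the operator series $|S_z|^p = \sum_k\binom{p/2}{k}(S_z^*S_z-\un)^k$ term by term, since the individual terms need not be trace-class and the series converges only in measure. This is exactly the difficulty already handled in passing from Lemma \ref{thm=egalite_entre_normeP_et_trace_avec_adjoint} to Lemma \ref{thm=egalite_entre_normeP_et_trace_avec_adjoint_nb}, so the remedy is the same: replace the naive series by its truncation $Q_n$ with $n=2$, invoke Corollary \ref{thm=convergence_en_mesure_cas_NC2} for convergence in measure and Lemma \ref{thm=domination_cas_NC} for an $L_1$-domination, and apply Fack--Kosaki's dominated convergence theorem to justify taking the limit under the trace. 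One subtlety worth flagging: for the $2$-norm we need $x\in L_2$, i.e.\ the case ``$n=2$'' of those lemmas, and the hypothesis $p\ge1$ (rather than $p>0$) is used precisely to guarantee $x\in L_p$ makes $\|\un+zx\|_p$ well-defined and that $L_2\cap L_p$ is handled cleanly — a point where Forelli's commutative argument and ours diverge only in technical packaging. Once the expansion is justified, matching the $z\bar z$-coefficients is immediate and the theorem follows.
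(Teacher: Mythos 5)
Your treatment of the equality $\|x\|_2=\|y\|_2$ \emph{when both sides are already known to be finite} is essentially the paper's: apply the $n=2$, $m=1$, $(\varepsilon_1,\varepsilon_2)=(*,1)$ case of Lemma \ref{thm=egalite_entre_normeP_et_trace_avec_adjoint_nb} (so $\alpha=1$), match the $z\bar z$-coefficient, and use $2\binom{p/2}{2}+\binom{p/2}{1}=p^2/4\neq0$. That part is fine.

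The genuine gap is the transfer of finiteness: showing $\|x\|_2<\infty\Rightarrow\|y\|_2<\infty$. Your first suggestion (``if $\|x\|_2<\infty$ then the argument applies; symmetrically for $y$'') is circular, since Lemma \ref{thm=egalite_entre_normeP_et_trace_avec_adjoint_nb} needs $x_j\in L_2$ on \emph{both} sides before any trace identity can be extracted. Your alternative via Corollary \ref{thm=convergence_en_mesure_cas_NC2}, Lemma \ref{thm=domination_cas_NC} and Fack--Kosaki does not get started on the $y$-side either: the domination function there is $C(\mu_{t/K}(y)^2+\mu_{t/K}(y)^p)$, which is integrable in $t$ only if $y\in L_2$ --- exactly what you are trying to prove. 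Convergence in measure of the second-order remainder holds for any $y\in L_p$, but without an integrable dominant you cannot pass to the trace, and without some positivity you cannot invoke Fatou instead. This is where the paper's proof has an extra idea that your proposal is missing: it forms
\[
C(y)=\sum_{\omega\in\{1,i,-1,-i\}}\bigl(|\un+\omega y|^p+|\un+\omega y^*|^p-2\bigr),
\]
proves via Lemma \ref{thm=ineg_a_quatre_termes} (the operator inequality $|\un+A|^p+|\un-A|^p+|\un+A^*|^p+|\un-A^*|^p\geq4$, valid for $p\geq1$ by operator convexity and a $2\times2$ positivity argument) that $C(ry)\geq0$, and then applies \emph{Fatou's lemma} on the $y$-side: $2p^2\|y\|_2^2\leq\liminf_r\widetilde\tau(C(ry))/r^2$. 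The hypothesis \eqref{eq=hypothese_isom_sur_Lp_pour_avoir_L2} gives $\widetilde\tau(C(ry))=\tau(C(rx))$, and only on the $x$-side --- where $x\in L_2$ is known --- does dominated convergence identify the limit as $2p^2\|x\|_2^2<\infty$. Note also that your stated reason for the hypothesis $p\geq1$ (well-definedness of $\|\un+zx\|_p$) is not the right one; the $p$-``norm'' makes sense for all $0<p<\infty$, and $p\geq1$ is needed precisely for the positivity inequality of Lemma \ref{thm=ineg_a_quatre_termes}, which fails for $0<p<1$ (take $A=\un$). Without this positivity-plus-Fatou mechanism, the finiteness direction of the theorem is not proved.
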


The following Lemma will be used; its proof was communicated to me by
Pisier.
\begin{lemma}
\label{thm=ineg_a_quatre_termes}
Let $A$ be a bounded operator on a Hilbert space $H$, and $p \geq
1$. Then
\begin{equation}
\label{eq=ineg_a_quatre_termes}
 |1+A|^p + |1-A|^p + |1+A^*|^p + |1-A^*|^p \geq 4
\end{equation}
\end{lemma}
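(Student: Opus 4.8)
The inequality is between bounded self-adjoint operators, so it suffices to show that $\langle\left(|1+A|^p + |1-A|^p + |1+A^*|^p + |1-A^*|^p\right)\xi,\xi\rangle \geq 4$ for every unit vector $\xi \in H$. Since $1+A^* = (1+A)^*$ and $1-A^* = (1-A)^*$, the left-hand side splits into the two contributions $\langle|T|^p\xi,\xi\rangle + \langle|T^*|^p\xi,\xi\rangle$ for $T = 1+A$ and for $T = 1-A$, and the whole proof reduces to a pointwise estimate for each such pair.

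The key lemma to isolate is: for every bounded operator $T$ on $H$, every $p\geq 1$, and every unit vector $\xi$,
\[ \langle|T|^p\xi,\xi\rangle + \langle|T^*|^p\xi,\xi\rangle \;\geq\; 2\,|\langle T\xi,\xi\rangle|^p. \]
To prove it I would write the polar decomposition $T = U|T|$, which gives $|T^*| = U|T|U^*$, and apply Cauchy--Schwarz to $\langle T\xi,\xi\rangle = \langle |T|^{1/2}\xi,\,|T|^{1/2}U^*\xi\rangle$ to obtain $|\langle T\xi,\xi\rangle|^2 \leq \langle|T|\xi,\xi\rangle\,\langle|T^*|\xi,\xi\rangle$. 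Because $t\mapsto t^p$ is convex on $[0,\infty)$ and $\xi$ is a unit vector, applying the scalar Jensen inequality to the (probability) spectral measure of $|T|$ (resp.\ of $|T^*|$) at $\xi$ yields $\langle|T|^p\xi,\xi\rangle \geq \langle|T|\xi,\xi\rangle^p$ and $\langle|T^*|^p\xi,\xi\rangle \geq \langle|T^*|\xi,\xi\rangle^p$. Adding these two and using the arithmetic--geometric mean inequality followed by the Cauchy--Schwarz bound above gives
\[ \langle|T|^p\xi,\xi\rangle + \langle|T^*|^p\xi,\xi\rangle \;\geq\; \langle|T|\xi,\xi\rangle^p + \langle|T^*|\xi,\xi\rangle^p \;\geq\; 2\bigl(\langle|T|\xi,\xi\rangle\,\langle|T^*|\xi,\xi\rangle\bigr)^{p/2} \;\geq\; 2\,|\langle T\xi,\xi\rangle|^p, \]
which is the lemma.

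Applying the lemma to $T = 1+A$ and to $T = 1-A$, and setting $w = \langle A\xi,\xi\rangle \in \C$, the left-hand side of the desired inequality (paired with $\xi$) is bounded below by $2\,|1+w|^p + 2\,|1-w|^p$. It then remains only to check the elementary scalar fact that $|1+w|^p + |1-w|^p \geq 2$ for all $w\in\C$ and $p\geq 1$: by the triangle inequality $|1+w| + |1-w| \geq |(1+w)+(1-w)| = 2$, and since $t\mapsto t^p$ is convex, $\tfrac12\left(|1+w|^p+|1-w|^p\right) \geq \left(\tfrac12\left(|1+w|+|1-w|\right)\right)^p \geq 1$. This gives the bound $\geq 4$ and finishes the proof. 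The only point requiring real care is the key lemma, specifically the identity $|T^*| = U|T|U^*$ coming from the polar decomposition together with the Cauchy--Schwarz step; everything else is convexity of $t \mapsto t^p$, so that is where I expect the main (mild) obstacle to lie.
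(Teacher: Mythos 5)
Your proof is correct, and it takes a genuinely different route from the paper's. The paper first reduces to $p=1$ via operator convexity of $t\mapsto t^r$ for $1\le r\le 2$ and an induction, and then, for $p=1$, invokes the positivity of the $2\times 2$ operator matrix $\bigl(\begin{smallmatrix}|B| & B^*\\ B & |B^*|\end{smallmatrix}\bigr)$ for $B=\un\pm A$, $\un\pm A^*$, summing to get $\bigl(\begin{smallmatrix}C & 4\\ 4 & C\end{smallmatrix}\bigr)\ge 0$ and hence $C\ge 4$. You instead test directly against vector states and prove the stronger pointwise bound $\langle(\,\cdot\,)\xi,\xi\rangle\ge 2|1+w|^p+2|1-w|^p\ge 4$ with $w=\langle A\xi,\xi\rangle$, via the key inequality $\langle|T|^p\xi,\xi\rangle+\langle|T^*|^p\xi,\xi\rangle\ge 2|\langle T\xi,\xi\rangle|^p$. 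All the steps check out: the identity $|T^*|=U|T|U^*$ is valid (using $U^*U|T|=|T|$ for the polar partial isometry), the Cauchy--Schwarz bound $|\langle T\xi,\xi\rangle|^2\le\langle|T|\xi,\xi\rangle\langle|T^*|\xi,\xi\rangle$ follows, and the Jensen step is legitimate because the spectral measure of $|T|$ at a unit vector is a probability measure. Your argument is more elementary --- it avoids operator convexity, the reduction to $p=1$, and the matrix-positivity lemma from the literature --- and handles all $p\ge 1$ in one stroke; the paper's argument is shorter modulo the cited facts and its $2\times2$ matrix trick is the operator-valued analogue of your scalar Cauchy--Schwarz step.
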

\begin{proof}
By the operator convexity of the function $t \rightarrow t^r$ for $1
\leq r \leq 2$ and by an induction argument, it is enough to prove
\eqref{eq=ineg_a_quatre_termes} for $p = 1$.  For convenience we
denote by $C = |\un+A| + |\un-A| + |\un+A^*| + |\un-A^*|$.

By \cite[Corollary 1.3.7]{MR2284176}, for any operator $B$ on $H$,
the following operator on $H \oplus H$ is positive:
\[ \begin{pmatrix} |B| &  B^* \\ B & |B^*| \end{pmatrix}.\]
Replacing $B$ respectively by $\un + A$, $\un - A$, $\un + A^*$ and
$\un - A^*$ and adding the four resulting positive operators, we get
that the following operator is also positive:
\[ \begin{pmatrix} C &  4 \\ 4 & C\end{pmatrix}.\]

It is classical that this implies that $C \geq 4$ (see for example
\cite[Theorem 1.3.3]{MR2284176}).
\end{proof}
\begin{rem} 
  The Lemma is stated for bounded operators, but by approximation it also
  applies to closed densely defined unbounded operators.

  The inequality \eqref{eq=ineg_a_quatre_termes} does not hold for
  $0<p < 1$ (take $A = \un$). But if one could find a finitely
  supported probability measure $\nu$ on $\C \setminus \{0\}$ such
  that
\begin{equation}
\label{eq=positivite_aplusquatre_termes}
 \int \left(|\un + z A|^p + |\un + z A^*|^p \right) d \nu(z) \geq 2,
\end{equation}
then one would be able to get the conclusion of Theorem
\ref{thm=isometrie_Lp_est_isom_sur_L2} also for the values of $p$ for
which \eqref{eq=positivite_aplusquatre_termes} holds.
\end{rem}

\begin{proof}[Proof of Theorem \ref{thm=isometrie_Lp_est_isom_sur_L2}]
If $\|x\|_2 = \|y\|_2 = \infty$, there is nothing to prove.

If $\|x\|_2 < \infty$ and $\|y\|_2 < \infty$, then the fact that
$\|x\|_2 = \|y\|_2$ follows from Lemma
\ref{thm=egalite_entre_normeP_et_trace_avec_adjoint_nb} with $n = 2$,
$m=1$ and $(\varepsilon_1,\varepsilon_2) = (*,1)$ (and hence $\alpha =
1$). Indeed, by the hypothesis
\eqref{eq=hypothese_isom_sur_Lp_pour_avoir_L2}, the left-hand side in
equation
\eqref{eq=egalite_entre_normeP_et_trace_avec_adjoint_non_borne} does
not change if one takes $x_1 = x_2 = x \in L_p(\MM)$ or $x_1 = x_2 = y
\in L_p(\NN)$. Therefore the right-hand sides are also equal:
\[ \tau(x^* x) \left( 2 \binom{p/2}{2} + \binom{p/2}{1}\right) 
 = \tau(y^* y) \left( 2 \binom{p/2}{2} + \binom{p/2}{1}\right).\]

This implies that $\|x\|_2^2 = \tau(x^* x) = \tau(y^* y) = \|y\|_2^2$ 
since $2 \binom{p/2}{2} + \binom{p/2}{1} = p^2/4 \neq 0$.

Hence we only have to prove that if $\|x\|_2 < \infty$, then
$\|y\|_2<\infty$.

Denote by $C(x)$ and $C(y)$ the following operators:
\begin{eqnarray*}
C(x)= \sum_{\omega \in \{1,i,-1,-i\}} \left(|1 + \omega x|^p + |1 +
\omega x^*|^p - 2\right)\\ 
C(y)= \sum_{\omega \in \{1,i,-1,-i\}}
\left(|1 + \omega y|^p + |1 + \omega y^*|^p -2 \right)
\end{eqnarray*}

By Lemma \ref{thm=ineg_a_quatre_termes}, $C(x)$ and $C(y)$ are
positive operators, and by Lemma
\ref{thm=convergence_en_mesure_cas_NC}, $C(rx)/r^2$
(resp. $C(ry)/r^2$) converges in measure to $p^2(x^*x + x x^*)$
(resp. $p^2(y^*y + y y^*)$) as $r \rightarrow 0$. Moreover, for any
$r$, the hypothesis \eqref{eq=hypothese_isom_sur_Lp_pour_avoir_L2} implies that
\[ \tau(C(rx)) = 2 \sum_{\omega \in \{1,i,-1,-i\}}
\|1 + \omega r x\|_p^p - 8 = \widetilde \tau(C(ry)).\]

By Fatou's Lemma (\cite[Theorem 3.5]{MR840845}), we thus have that
\[2 p^2\|y\|_2^2 = \widetilde \tau\left(p^2(y^*y + y y^*)\right) \leq 
\liminf_{r\rightarrow 0} \tau(C(rx)/r^2). \]

We now use Fack and Kosaki's dominated convergence theorem
\cite[Theorem 3.6]{MR840845} to prove that $\tau(C(rx))/r^2
\rightarrow 2 p^2 \|x\|_2^2$. By the domination Lemma
\ref{thm=domination_cas_NC} and the property
\eqref{eq=inegalite_sing_num_add} of singular numbers, there are
constants $C,K>0$ such that
\[\mu_t(C(rx)/r^2) \leq C(\mu_{t/K}(x)^2 + \mu_{t/K}(x)^p).\]
As in the proof of Lemma
\ref{thm=egalite_entre_normeP_et_trace_avec_adjoint_nb}, this is
enough to deduce that
\[ \lim_{r\rightarrow 0} \tau(C(rx)/r^2) = \tau\left(\lim_{r\rightarrow 0} 
C(rx)/r^2\right) = 2 p^2 \|x\|_2^2.\]

This concludes the proof.
\end{proof}

\subsection{Boundedness on $E \cap L_{2n}$ for all $n$ of $2$-isometries 
on $E \subset L_p$} Here we prove that a unital $2$-isometric map
between unital subspaces of \nc{} $L_p$-spaces maps a
bounded operator to a bounded operator. The general idea is to prove
by induction on $n$ that such a map is also an isometry for the
$q$-norm when $q=2n$ and to make $n$ grow to $\infty$. The idea of the
proof is similar to the proof of Theorem
\ref{thm=isometrie_Lp_est_isom_sur_L2}. The precise statement is:

\begin{thm}%
\label{thm=equivalence_entre_dans_L2n_et_image}
Let $0<p<\infty$, $p$ not an even integer. Let $x \in L_p(\MM,\tau)$ and $y
\in L_p(\NN, \widetilde \tau)$ such that, for any $a\in \M{2}{\C}$
\[\left \| \un + a \otimes x \right\|_p = \left \| \un + a
  \otimes y \right\|_p.\]
Then for any $n \in \N^* \cup \{\infty\}$, $x \in L_{2n}(\MM)$ if and
only if $y \in L_{2n}(\NN)$, and when this holds $\|x\|_{2n} =
\|y\|_{2n}$.
\end{thm}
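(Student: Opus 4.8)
The proof goes by induction on $n$, the base case $n=1$ being exactly Theorem~\ref{thm=isometrie_Lp_est_isom_sur_L2} (with the $2$-isometry hypothesis here being stronger than the one-dimensional hypothesis there). For the inductive step, assume $x \in L_{2n}(\MM)$ iff $y \in L_{2n}(\NN)$ and, when both hold, $\|x\|_{2n} = \|y\|_{2n}$; moreover one should carry along the stronger inductive hypothesis that the \emph{full} $*$-distributions of $x$ and $y$ agree up to order $2n$, i.e. $\tau(x^{\varepsilon_1}\cdots x^{\varepsilon_k}) = \widetilde\tau(y^{\varepsilon_1}\cdots y^{\varepsilon_k})$ for all $k \leq 2n$ and all $\varepsilon \in \{1,*\}^k$. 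The point of the matrix hypothesis (allowing $a \in \M{2}{\C}$, hence by tensoring $a \in \M{2m}{\C}$ for all $m$) is precisely that, via Lemma~\ref{thm=egalite_entre_normeP_et_trace_avec_adjoint_nb} and the small-size matrices $a_j \in \Mm$ of Remark~\ref{rem=propriete_combinatoire_des_eij} (which exist for $m \geq k/2$), the quantity $\tau(x^{\varepsilon_1}\cdots x^{\varepsilon_k})$ is computed from the $p$-norms $\|S_{rZ}\|_p^p$ of combinations $\un + \sum_j z_j a_j^{\varepsilon_j}\otimes x$ — and these $p$-norms, by hypothesis, are the same for $x$ and for $y$, \emph{provided $x$ and $y$ actually lie in $L_k(\MM)$, $L_k(\NN)$}. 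So the real content is an integrability bootstrap: knowing $x,y \in L_{2n}$, show $x \in L_{2n+2}$ iff $y \in L_{2n+2}$.

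For this bootstrap I would imitate the proof of Theorem~\ref{thm=isometrie_Lp_est_isom_sur_L2}. Suppose $x \in L_{2n+2}(\MM)$; we want $y \in L_{2n+2}(\NN)$. Consider the operator $X = a\otimes x$ for a suitable fixed $a \in \M{2}{\C}$ (or an appropriate matrix combination), form $Y_r = |\un+rX|^2 - \un$, and look at the analogue of $C(rx)$: a finite linear combination, over roots of unity $\omega$, of the operators $|\un+\omega r X|^p$ minus its truncated expansion $Q_{2n}\big(\sum_{j\le 2n}\binom{p/2}{j}Y_r^j\big)$. By a Fatou-type argument (\cite[Theorem 3.5]{MR840845}) applied to the positive parts and using Lemma~\ref{thm=convergence_en_mesure_cas_NC} (which identifies the $r\to 0$ limit of $r^{-(2n+2)}$ times such a combination with an explicit polynomial of degree $2n+2$ in $X,X^*$, whose trace is a nonzero multiple of $\|x\|_{2n+2}^{2n+2}$ plus lower-order terms already controlled by the inductive hypothesis), one gets a \emph{finite} upper bound on $\widetilde\tau$ of the analogous limiting expression for $y$ in terms of $\tau$ of the one for $x$. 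The key positivity input — replacing Lemma~\ref{thm=ineg_a_quatre_termes} — is that there is a finitely supported probability measure $\nu$ on $\C\setminus\{0\}$ with $\int\big(|\un+zA|^p - Q_{2n}(\cdots)\big)\,d\nu(z) \geq 0$ (up to a harmless constant), so that the relevant combination is a positive operator; such a $\nu$ exists because the scalar function $t\mapsto (1+t)^{p/2} - \sum_{j\le 2n}\binom{p/2}{j}t^j$ has a sign-definite leading behavior of order $t^{2n+1}$, and averaging $|\un+zA|^p$ over enough roots of unity kills all intermediate powers. Combined with the reverse inequality coming from Fack--Kosaki's dominated convergence theorem (\cite[Theorem 3.6]{MR840845}) applied to $x$ via the domination Lemma~\ref{thm=domination_cas_NC}, this forces $\|y\|_{2n+2} < \infty$, and then symmetry and the moment-equality from Lemma~\ref{thm=egalite_entre_normeP_et_trace_avec_adjoint_nb} give $\|x\|_{2n+2} = \|y\|_{2n+2}$, closing the induction.

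The case $n = \infty$ follows formally: $x \in L_{2n}$ for all finite $n$ iff $y \in L_{2n}$ for all finite $n$, and $\|x\|_\infty = \lim_n \|x\|_{2n}$ (with the convention that the limit is $+\infty$ if $x \notin \bigcap_n L_{2n}$), so $x \in \MM$ iff $y \in \NN$ with equal operator norms.

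The step I expect to be the main obstacle is the positivity/sign bookkeeping in the bootstrap: constructing the right finitely supported measure $\nu$ on $\C\setminus\{0\}$ (equivalently, choosing the right roots of unity and the right truncation order so that the combination $\sum_\omega c_\omega\big(|\un+\omega rX|^p - Q_{2n}(\cdots)\big)$ is genuinely a positive operator), and simultaneously ensuring that its limiting trace isolates $\|x\|_{2n+2}^{2n+2}$ with a \emph{nonzero} coefficient — here one must check that the relevant binomial combination $\binom{p/2}{n+1}$-type coefficient does not vanish, which is exactly where $p \notin 2\N$ is used. The interplay between "enough roots of unity to kill low-order cross terms" and "the right leading sign" is the delicate point; everything else (convergence in measure, domination, Fatou, dominated convergence) is handled by the machinery already set up in Lemmas~\ref{thm=convergence_en_mesure_cas_NC}--\ref{thm=egalite_entre_normeP_et_trace_avec_adjoint_nb}.
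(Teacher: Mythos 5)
Your overall scheme (induction on $n$, Fatou in one direction, dominated convergence in the other, $p\notin 2\N$ entering through a nonvanishing binomial-type coefficient) has the right shape, but the positivity input you postulate is a genuine gap, and it is exactly the point the paper's argument is engineered to avoid. You need, for each truncation order, a finitely supported measure $\nu$ on $\C\setminus\{0\}$ such that $\int\bigl(|\un+zA|^p - Q_{2n}(\cdots)\bigr)\,d\nu(z)$ is a \emph{positive operator} for a general operator $A$ (here $A=rX$ built from $y$, about which you only know $y\in L_{2n}$, so Fatou is your only tool and it requires genuine operator positivity for every fixed $r$). Your justification --- the scalar remainder $(1+t)^{p/2}-\sum_{j\le 2n}\binom{p/2}{j}t^j$ has sign-definite leading behaviour and roots-of-unity averaging kills intermediate powers --- does not deliver this: averaging over $m$-th roots of unity only kills monomials $z^k\bar z^l$ with $k\not\equiv l \ (\mathrm{mod}\ m)$, and the surviving balanced terms are arbitrary words in $A,A^*$, not powers of $|A|^2$, so no scalar functional-calculus argument applies; moreover positivity is needed globally over the spectrum, not just to leading order as $r\to 0$. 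The paper itself notes, right after Lemma \ref{thm=ineg_a_quatre_termes}, that even the simplest instance of such an inequality (order one, the four points $\pm 1$ applied to $A$ and $A^*$) \emph{fails} for $0<p<1$ (take $A=\un$), while the theorem you are proving covers all $0<p<\infty$, $p\notin 2\N$; so for $p<1$ your bootstrap has no base, and for $p\ge 1$ the higher-order operator positivity is unproven.

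The paper's route makes everything commutative \emph{before} any positivity is invoked: it puts $x$ (resp.\ $y$) in a corner, $a(x)=\begin{pmatrix}0&x\\0&0\end{pmatrix}$, so that $a(x)^2=0$; for such $a$ one has the exact identity $|\un+a|^p+|\un-a|^p+|\un+a^*|^p+|\un-a^*|^p=2\psi(a^*a)+2\psi(aa^*)-4$ with the explicit scalar function $\psi$ of Corollary \ref{thm=expression_de_p-puiss_avec_psi}. The required positivity is then the \emph{global} sign-definiteness on $\R^+$ of $\psi(t)-\sum_{n\le N}\lambda_n t^n$, proved by a qualitative ODE argument (Proposition \ref{thm=proprietes_de_psi} and Lemma \ref{thm=equa_diff_qualitatif}), and the induction on $N$ is Lemma \ref{thm=lien_entre_etre_dans_L2n_et_norme_p} (Fatou one way, classical dominated convergence the other), using the hypothesis only for the matrices $t\,e_{12}$, $t\in\R$. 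A secondary problem in your write-up: the parenthetical ``hence by tensoring $a\in M_{2m}(\C)$'' and the inductive hypothesis that \emph{all} $*$-moments of order $\le 2n$ agree are not justified, since the hypothesis concerns $2\times 2$ coefficients only and Lemma \ref{thm=egalite_entre_normeP_et_trace_avec_adjoint_nb} with $m=2$ reaches only moments of order $\le 4$; fortunately only the quantities $\|x\|_{2k}=\|y\|_{2k}$, $k<N$, are needed for the induction, and these the corner trick does provide.
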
%

The Theorem is proved with the use of Fatou's Lemma and expansions in
power series of operators of the form $|\un + a|^p$ for $a$ satisfying
$a^2=0$. More precisely, for such an $a$, we derive an expression of the
following form (Corollary \ref{thm=expression_de_p-puiss_avec_psi} and
Lemma \ref{thm=proprietes_de_psi}):
\[\left|1+a\right|^p + \left|1-a\right|^p + \left|1+a^*\right|^p +
\left|1-a^*\right|^p \simeq \sum_{n=0}^N \lambda_n |a|^{2n} + \lambda_n
|a^*|^{2n}\]
and are able to use a qualitative study of differential equations
(Lemma \ref{thm=equa_diff_qualitatif}) to prove the positivity (or
negativity) of the difference of the two above terms.

\begin{lemma}
\label{thm=lemme_somme_de_puiss_a_i}
Let $a$ be an element of a $*$-algebra such that $a^2=0$.
Then if one denotes by $a_1$, $a_2$, $a_3$, $a_4$ the expressions
\begin{eqnarray*}
a_1 = a^* a + a + a^*\\
a_2 = a^* a - a - a^*\\
a_3 = a a^* + a + a^*\\
a_4 = a a^* - a - a^*
\end{eqnarray*}
then for any integer $m \geq 1$,
\begin{equation*}\sum_{j=1}^4 a_j^m = 2 P_m(a^*a) + 2 P_m(a a^*),
\end{equation*}
where the polynomial $P_m$ is defined by
\begin{equation}
\label{eq=def_de_Pm}
P_m(X) = \left(\frac{X + \sqrt{X^2+4X}}{2}\right)^m +
\left(\frac{X - \sqrt{X^2+4X}}{2}\right)^m \textrm{  for any } X \in \R.
\end{equation}
\end{lemma}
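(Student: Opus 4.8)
I would work inside the (unital) $*$-algebra generated by $a$, writing $p\egdef a^*a$, $q\egdef aa^*$, $b\egdef a+a^*$, so that $a_1=p+b$, $a_2=p-b$, $a_3=q+b$, $a_4=q-b$. Since $a^2=0$ forces $(a^*)^2=0$, the following relations hold and carry the whole computation: $b^2=p+q$; $pq=qp=0$; $pa=a^*p=aq=qa^*=0$; and for every polynomial $f$, $af(p)=f(q)a$ and $f(p)a^*=a^*f(q)$ (whence also $f(p)a=f(0)a$, $pf(q)=f(0)p$, and the analogous collapses). They all follow from $a^2=0$ and associativity alone. Observe moreover that $a_3$ is obtained from $a_1$ by the substitution $a\mapsto a^*$, and $a_2$ (resp.\ $a_4$) from $a_1$ (resp.\ $a_3$) by $a\mapsto-a$; each of these substitutions preserves the relation $a^2=0$, so any identity proved for a general such $a$ transfers to them.

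The core step is the explicit formula, valid for every $m\ge1$,
\[a_1^m=\alpha_m(p)+\gamma_m(q)+\beta_m(p)\,a^*+a\,\beta_m(p),\]
where $\alpha_m,\beta_m,\gamma_m\in\C[X]$ are the polynomials defined by $\alpha_1=X$, $\beta_1=1$, $\gamma_1=0$ and the recursions $\beta_{m+1}=\alpha_m$, $\alpha_{m+1}=X(\alpha_m+\beta_m)$, $\gamma_{m+1}=X\beta_m$; equivalently, with $\alpha_0\egdef 1$, one has $\beta_m=\alpha_{m-1}$, $\alpha_{m+1}=X\alpha_m+X\alpha_{m-1}$, $\gamma_m=\alpha_m-X\alpha_{m-1}$, and $\alpha_m(0)=\gamma_m(0)=0$ for $m\ge1$. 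I would prove this by induction on $m$: the case $m=1$ is the definition of $a_1$, and for the step one multiplies the formula for $a_1^m$ on the right by $a_1=p+a+a^*$ and reduces the twelve resulting products with the relations above. Six of them vanish; the three surviving ``diagonal'' products $\alpha_m(p)p$, $\beta_m(p)\,a^*a$ and $a\,\beta_m(p)\,a^*=q\beta_m(q)$ combine into $\alpha_{m+1}(p)+\gamma_{m+1}(q)$; and the three ``off-diagonal'' products $\alpha_m(p)a^*$, $\gamma_m(q)a=a\gamma_m(p)$ and $a\,\beta_m(p)\,p$ combine into $\beta_{m+1}(p)a^*+a\,\beta_{m+1}(p)$, the two ways of reading $\beta_{m+1}$ off agreeing exactly because $\gamma_m+X\beta_m=\alpha_m$. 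Feeding $a^*$ and $-a$ into this formula gives $a_3^m=\alpha_m(q)+\gamma_m(p)+\beta_m(q)a+a^*\beta_m(q)$, and $a_2^m,a_4^m$ differing from $a_1^m,a_3^m$ only by the sign of the two off-diagonal terms. Adding the four expressions, those terms cancel in pairs and
\[\sum_{j=1}^4 a_j^m=2\,(\alpha_m+\gamma_m)(p)+2\,(\alpha_m+\gamma_m)(q).\]

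Finally I would identify $R_m\egdef\alpha_m+\gamma_m$ with $P_m$ for $m\ge1$. The function $P_m$ of \eqref{eq=def_de_Pm} is in fact a polynomial, because $\tfrac12(X\pm\sqrt{X^2+4X})$ are the roots of $t^2-Xt-X=0$, so $P_m$ is their $m$-th power sum and thus satisfies $P_0=2$, $P_1=X$, $P_{m+1}=XP_m+XP_{m-1}$. On the other hand $R_m=2\alpha_m-X\alpha_{m-1}$ (for $m\ge1$, with $\alpha_0=1$), and a one-line computation using $X\alpha_{m-1}+X\alpha_{m-2}=\alpha_m$ shows $R_{m+1}=XR_m+XR_{m-1}$ for $m\ge2$; since $R_1=X=P_1$ and $R_2=X^2+2X=P_2$, induction gives $R_m=P_m$ for all $m\ge1$, which finishes the proof. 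The only genuinely delicate point is the induction of the second paragraph: one must keep careful track of which of the twelve products lands on the diagonal (a function of $p$ or of $q$) and which on the $a^*$-side or the $a$-side, and check that the three polynomial recursions produced thereby are mutually consistent; the rest is routine. One can make this step visually obvious by first passing to a faithful representation in which $p$ and $q$ act on orthogonal subspaces and $a=\bigl(\begin{smallmatrix}0&0\\ c&0\end{smallmatrix}\bigr)$, computing with honest $2\times2$ operator matrices (then $a_2=Da_1D$ with $D=\mathrm{diag}(1,-1)$, and so on), but the algebraic version sketched here has the merit of being valid verbatim in any $*$-algebra.
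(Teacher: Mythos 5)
Your argument is correct and follows essentially the same route as the paper's proof: put $a_1^m$ into a normal form (polynomial in $a^*a$, plus $a$- and $a^*$-linear terms, plus a term reducing to a polynomial in $aa^*$), observe that summing over the four $a_j$ cancels the linear terms, and identify the surviving polynomial with $P_m$ via the common three-term recursion $P_{m+2}=X(P_{m+1}+P_m)$ and the initial cases $m=1,2$. Your write-up merely makes explicit the reduction of the twelve products and identifies $P_m$ through Newton's power sums for $t^2-Xt-X$, details the paper leaves to the reader.
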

\begin{proof}
  We can assume that $a$ is free element satisfying $a^2 = 0$, so that
  there are well defined polynomials $A_m$, $B_m$, $C_m$ and $D_m$ in
  $\R[X]$ such that
\[(a_1)^m = A_m(a^*a) + a B_m (a^*a) + C_m(a^*a) a^* + a D_m(a^*a) a^*.\]
Thus we can write
\[\sum_{j=1}^4 a_j^m = 2 P_m(a^*a) + 2 P_m(a a^*)\]
with $P_m = A_m + X D_m \in \R[X]$.

It is easy to check that the sequences of polynomials $(A_m)_m$ and
$(D_m)_m$ (and hence $(P_m)$) satisfy the following induction relations:
\begin{eqnarray*}
A_{m+2}(X) = X\left(A_{m+1}(X)+A_{m}(X)\right) & \textrm{if $m \geq 0$}\\
D_{m+2}(X) = X\left(D_{m+1}(X)+D_{m}(X)\right)  & \textrm{if $m \geq 1$}\\
P_{m+2}(X) = X\left(P_{m+1}(X)+P_{m}(X)\right)  & \textrm{if $m \geq 1$}
\end{eqnarray*}

But the right-hand side of \eqref{eq=def_de_Pm} also satisfies the same
relation, it is therefore enough (and trivial) to check that equality
\eqref{eq=def_de_Pm} holds for $m=1$ and $m=2$.
\end{proof}
\begin{lemma}
  Let $a$ be a closed densely defined operator affiliated to a von Neumann
  algebra $(\MM,\tau)$ such that $a^2=0$. Let $a_i, i=1, 2, 3, 4$ be as in
  Lemma \ref{thm=lemme_somme_de_puiss_a_i}.  Then for any continuous
  function $f:[-1,\infty) \rightarrow \R$,
\begin{multline}
\label{eq=valeur_somme_fai}
\sum_{j=1}^4 f(a_i)^m =
2 f\left(\frac{a^* a + \sqrt{(a^*a)^2+4a^*a}}{2}\right) +
2 f\left(\frac{a^* a - \sqrt{(a^*a)^2+4a^*a}}{2}\right) +\\
2 f\left(\frac{a a^* + \sqrt{(a a^*)^2+4a a^*}}{2}\right) +
2 f\left(\frac{a a^* - \sqrt{(a a^*)^2+4a a^*}}{2}\right)  - 2f(0).
\end{multline}
\end{lemma}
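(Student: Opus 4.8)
The plan is to reduce the identity \eqref{eq=valeur_somme_fai}, which ties together the \emph{non-commuting} operators $a_i$, $a^*a$ and $aa^*$, to the purely algebraic identity of Lemma~\ref{thm=lemme_somme_de_puiss_a_i} by a functional-calculus argument: prove it for polynomials $f$, extend to continuous $f$ by approximation when $a$ is bounded, and deduce the general case by a truncation adapted to $a^2=0$. First I would record what makes every term meaningful. Since $a^2=0$ one has $a_1=(\un+a)^*(\un+a)-\un$, $a_2=(\un-a)^*(\un-a)-\un$, $a_3=(\un+a^*)^*(\un+a^*)-\un$ and $a_4=(\un-a^*)^*(\un-a^*)-\un$, so each $a_i$ is self-adjoint, affiliated with $\MM$ and satisfies $a_i\ge-\un$; hence $f(a_i)$ is a well-defined $\tau$-measurable operator for any continuous $f$ on $[-1,\infty)$ (its generalized $s$-numbers are finite, e.g.\ by \eqref{eq=inegalite_sing_num_fonction}). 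Writing $g_\pm(X)=\tfrac12\bigl(X\pm\sqrt{X^2+4X}\bigr)$ for the two roots of $t^2-Xt-X=0$, the map $g_\pm$ is continuous on $[0,\infty)$, with $g_+(X)+g_-(X)=X$, $g_+(X)g_-(X)=-X$, $g_\pm(0)=0$ and $-1<g_-(X)\le0\le g_+(X)$; thus, $a^*a$ and $aa^*$ being positive and affiliated, the four operators on the right-hand side of \eqref{eq=valeur_somme_fai} are exactly $f(g_\pm(a^*a))$ and $f(g_\pm(aa^*))$, again $\tau$-measurable.

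For $f$ a polynomial, $f(X)=\sum_mc_mX^m$, the identity follows at once from Lemma~\ref{thm=lemme_somme_de_puiss_a_i}. Separating the constant term, $\sum_{i=1}^4f(a_i)=4c_0\un+\sum_{m\ge1}c_m\sum_{i=1}^4a_i^m$; since $P_m(X)=g_+(X)^m+g_-(X)^m$ one has $\sum_{m\ge1}c_mP_m(X)=f(g_+(X))+f(g_-(X))-2c_0$, so substituting $X=a^*a$ and $X=aa^*$ and collecting constants gives
\[\sum_{i=1}^4f(a_i)=2f(g_+(a^*a))+2f(g_-(a^*a))+2f(g_+(aa^*))+2f(g_-(aa^*))-4f(0).\]
(The additive constant is forced by the case $f\equiv\un$, where the left-hand side equals $4f(0)\un$ and the four right-hand terms sum to $8f(0)\un$.)

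Next, the passage to continuous $f$. When $a$ is bounded, all of $a_i$, $a^*a$, $aa^*$, $g_\pm(a^*a)$ and $g_\pm(aa^*)$ have spectra in one common compact set $K\subset[-1,\infty)$; choosing polynomials $q_k\to f$ uniformly on $K$ makes $q_k(T)\to f(T)$ in operator norm for each of these operators $T$, so the polynomial identity passes to the limit. For unbounded $a$ I would truncate. Since $a^2=0$ one computes $a(a^*a+aa^*)=aa^*a=(a^*a+aa^*)a$ and $[a^*a,aa^*]=0$, so the spectral projection $e_N=E_{[0,N]}(a^*a+aa^*)\in\MM$ commutes with $a$, $a^*$, $a^*a$, $aa^*$ and every $a_i$; then $a^{(N)}=ae_N$ is bounded, $(a^{(N)})^2=e_Na^2e_N=0$, and the bounded case gives the identity for $a^{(N)}$. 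Using $g_\pm(0)=0$ together with the commutation relations, every term appearing in that identity for $a^{(N)}$ equals the corresponding term for $a$ multiplied by $e_N$, plus $f(0)(\un-e_N)$; since $\tau(\un-e_N)\to0$ as $N\to\infty$, each such term converges in the measure topology to the corresponding term for $a$, and as the $\tau$-measurable operators form a topological $*$-algebra the identity for $a$ follows by letting $N\to\infty$.

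I expect the main obstacle — and essentially the only analytic content — to be this last, unbounded, step: one needs a single truncation $a^{(N)}$ that simultaneously preserves $a^2=0$, is compatible with the functional calculus applied to the several non-commuting operators $a_i$, $a^*a$ and $aa^*$ (and to the composites $g_\pm(\,\cdot\,)$), and converges in the measure topology in each of them. The fact that unlocks all three at once is that $e_N$ commutes with $a$, which is special to the hypothesis $a^2=0$ via $a(a^*a+aa^*)=aa^*a$; the rest is the algebra of Lemma~\ref{thm=lemme_somme_de_puiss_a_i} together with routine approximation in operator norm and in measure.
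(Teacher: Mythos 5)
Your proof is correct, and its core is the same as the paper's: the identity for polynomial $f$ is exactly Lemma \ref{thm=lemme_somme_de_puiss_a_i} after writing $P_m(X)=g_+(X)^m+g_-(X)^m$, and the general case follows by approximation. Two points of comparison. First, your constant $-4f(0)$ is the correct one and the $-2f(0)$ in the stated lemma is a typo: testing $f\equiv 1$ gives $4=8-4f(0)\cdot\!$(coefficient), and Corollary \ref{thm=expression_de_p-puiss_avec_psi} (where $f(0)=1$ and the constant is $-4$) confirms it. Second, where the paper disposes of the passage from polynomials to continuous $f$ in one line --- ``by continuity of the continuous functional calculus with respect to the measure topology'', i.e.\ approximate $f$ by polynomials uniformly on compact subsets of $[-1,\infty)$ and use \eqref{eq=inegalite_sing_num_fonction} together with the finiteness of $\mu_t(T)$ for $t>0$ to get convergence in measure of $q_k(T)$ to $f(T)$ for each of the eight self-adjoint $\tau$-measurable operators involved --- you instead truncate the operator: the observation that $a$ commutes with $a^*a+aa^*$ (because $a\cdot aa^*=0=a^*a\cdot a$ when $a^2=0$), hence with $e_N=E_{[0,N]}(a^*a+aa^*)$, gives a bounded $ae_N$ still satisfying $(ae_N)^2=0$, and every term of the identity for $ae_N$ is the corresponding term for $a$ compressed by $e_N$ up to multiples of $f(0)(\un-e_N)$; letting $N\to\infty$ in measure finishes. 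Both routes are valid; the paper's is shorter, while yours avoids having to approximate $f$ by a single polynomial sequence uniformly on all compacta and makes the unbounded bookkeeping completely explicit, at the cost of the extra commutation argument (which is indeed special to $a^2=0$). Either way the lemma stands, with $-4f(0)$.
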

\begin{proof}
Lemma \ref{thm=lemme_somme_de_puiss_a_i} implies that
\eqref{eq=valeur_somme_fai} holds when $f$ is a polynomial. By continuity
of the continuous functional calculus (with respect to the measure topology
when $a$ is unbounded), \eqref{eq=valeur_somme_fai} thus holds for any
continuous $f$.
\end{proof}%
\begin{corollaire}%
\label{thm=expression_de_p-puiss_avec_psi}
Let $0<p<1$ and $a$, as above, satisfying $a^2=0$. Then
\[ \left|1+a\right|^p + \left|1-a\right|^p + \left|1+a^*\right|^p +
\left|1-a^*\right|^p = 2 \psi(a^*a) + 2 \psi(a a^*) - 4,\]
where $\psi$ is the function $\psi:\R^+ \rightarrow \R$ defined by
\[\psi(t) =  \left(1+\frac{t + \sqrt{t^2+4t}}{2}\right)^{\nicefrac p 2} +
 \left(1+\frac{t - \sqrt{t^2+4t}}{2}\right)^{\nicefrac p 2}.\]
\end{corollaire}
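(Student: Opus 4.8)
The plan is to derive Corollary \ref{thm=expression_de_p-puiss_avec_psi} directly from the preceding Lemma by an appropriate choice of the operator and the continuous function. Concretely, I would set $b = a^* a$, $b' = a a^*$ and apply equation \eqref{eq=valeur_somme_fai} with $f(t) = (1+t)^{\nicefrac p 2}$, which is continuous on $[-1,\infty)$ precisely because $0<p<1<2$ guarantees no issue at the endpoint $t=-1$ (the exponent $p/2$ is positive). The left-hand side $\sum_{j=1}^4 f(a_i)$ should be identified with $\left|1+a\right|^p + \left|1-a\right|^p + \left|1+a^*\right|^p + \left|1-a^*\right|^p$. The right-hand side, after substituting $f(t)=(1+t)^{p/2}$ and $f(0)=1$, becomes exactly $2\psi(a^*a) + 2\psi(aa^*) - 4$ by the very definition of $\psi$, so the statement follows once the identification of the left-hand side is justified.

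The one genuine point to check is that $f(a_i) = |1\pm a|^p$ or $|1 \pm a^*|^p$ for the four values of $i$. Recall $a_1 = a^*a + a + a^* = (1+a)^*(1+a) - 1$ since $a^2 = 0$ forces $(1+a)^*(1+a) = 1 + a + a^* + a^*a$. Hence $1 + a_1 = |1+a|^2$, and $f(a_1) = (1+a_1)^{p/2} = (|1+a|^2)^{p/2} = |1+a|^p$ by the functional calculus for the positive operator $|1+a|$. The same computation handles $a_2 \leftrightarrow |1-a|^2$, $a_3 \leftrightarrow |1+a^*|^2$ (using $(1+a^*)^*(1+a^*) = 1 + a + a^* + aa^*$, again by $a^2=0$), and $a_4 \leftrightarrow |1-a^*|^2$. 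One should note that $a_i \geq -1$ as an operator (it equals a square of a self-adjoint-shifted operator minus $1$, i.e. $|1\pm a^{(*)}|^2 - 1 \geq -1$), so that $f$ is indeed being evaluated on the spectrum where it is defined; this is where the hypothesis on the domain $[-1,\infty)$ of $f$ in the previous Lemma is used. The unbounded case is covered by the same continuity-of-functional-calculus remark already invoked in the proof of that Lemma.

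The main obstacle — really the only nontrivial bookkeeping — is matching the argument of $f$ on the right-hand side of \eqref{eq=valeur_somme_fai}: there the polynomials $P_m$, hence $\psi$, are applied to $\frac{a^*a \pm \sqrt{(a^*a)^2 + 4a^*a}}{2}$ rather than to $a^*a$ directly, and one must make sure the substitution $t \mapsto (1+t)^{p/2}$ in the formula defining $\psi$ lands on precisely $\psi(a^*a)$ and $\psi(aa^*)$. This is immediate from the definition
\[\psi(t) =  \left(1+\frac{t + \sqrt{t^2+4t}}{2}\right)^{\nicefrac p 2} +
 \left(1+\frac{t - \sqrt{t^2+4t}}{2}\right)^{\nicefrac p 2},\]
which is built so that $\psi(t) = f\!\left(\frac{t + \sqrt{t^2+4t}}{2}\right) + f\!\left(\frac{t - \sqrt{t^2+4t}}{2}\right)$ with $f(u) = (1+u)^{p/2}$; plugging $t = a^*a$ and $t = aa^*$ and collecting the $-2f(0) = -2$ term from \eqref{eq=valeur_somme_fai} (which appears twice, once implicitly — total $-4$) yields the claimed identity. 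So I expect the proof to be one short paragraph: invoke the previous Lemma with $f(t)=(1+t)^{p/2}$, record the four identities $1+a_i = |1\pm a^{(*)}|^2$, and read off $\psi$.
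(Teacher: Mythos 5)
Your proof is correct and is essentially the paper's own argument: one simply notes $(1+a)^*(1+a)=1+a+a^*+a^*a$ etc., so that $|1\pm a|^p=(1+a_{1,2})^{\nicefrac p2}$ and $|1\pm a^*|^p=(1+a_{3,4})^{\nicefrac p2}$, and applies the preceding lemma with $f(t)=(1+t)^{\nicefrac p2}$. Your slightly awkward bookkeeping of the constant is actually the right call: the correction term in \eqref{eq=valeur_somme_fai} should read $-4f(0)$ (check it on constant $f$, or on $a=0$), so with $f(0)=1$ one indeed gets the $-4$ of the corollary; also note that $a^2=0$ is not even needed for the identities $1+a_i=|1\pm a^{(*)}|^2$, only for the lemma itself.
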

\begin{proof}
This is immediate since with the notation above, $\left|1+a\right|^p
= (1+a_1)^{p/2}$, $\left|1-a\right|^p = (1+a_2)^{p/2}$,
$\left|1+a^*\right|^p = (1+a_3)^{p/2}$ and $\left|1-a^*\right|^p =
(1+a_4)^{p/2}$
\end{proof}
Let us study the function $\psi$. 

\begin{proposition}
\label{thm=proprietes_de_psi}
The following properties hold for $\psi$:
\begin{enumerate}
\item \label{item=equa_diff_psi}
$\psi$ is a solution to the following
differential equation on $\R^+$:
\begin{equation}
\label{eq=equa_diff_de_psi}
(t^2+4t)y'' + (t+2) y' - \frac{p^2}{4} y = 0. 
\end{equation}

\item \label{item=dse_psi} $\psi$ has an expansion in power series
around $0$, more precisely for $|t|<4$,
\begin{equation}
\label{eq=dse_psi}
\psi(t) = \sum_{n \geq 0} \frac{2}{(2n)!} \prod_{k=0}^{n-1} 
\left(\frac{p^2}{4} - k^2\right) t^n = \sum_{n \geq 0} \lambda_n t^n.
\end{equation}

\item \label{item=positivite_psi}
 For any $0<p<\infty$, for any $t \in \R^+$ and any $N\in \N$,
\begin{equation}
\label{eq=positivite_psi}
\psi(t) - \sum_{n=0}^N \lambda_n t^n \left\{
\begin{array}{ll} 
\geq 0 & \textrm{if } p \geq 2N \textrm{ or } 
\lfloor N -\frac p 2 \rfloor \textrm{ is odd.}\\
\leq 0 & \textrm{else.}
\end{array}
\right.
\end{equation}
\end{enumerate}
\end{proposition}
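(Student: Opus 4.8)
The plan is to prove the three assertions in turn, working throughout with the functions $u_{\pm}(t)=\tfrac12\bigl(t\pm\sqrt{t^2+4t}\bigr)$, the two roots of $u^2-tu-t=0$, so that $u_++u_-=t$, $u_+u_-=-t$, hence $(1+u_+)(1+u_-)=1$; writing $g_{\pm}:=(1+u_{\pm})^{p/2}$ one then has $\psi=g_++g_-$ and $g_+g_-=1$, and since $(2+t)^2>t^2+4t$ the quantities $1+u_{\pm}$ are positive on $\R^+$, so $g_{\pm}$ are well-defined, smooth and positive there.

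For \eqref{eq=equa_diff_de_psi}: differentiating $u_{\pm}^2-tu_{\pm}-t=0$ gives $u_{\pm}'=\pm(1+u_{\pm})/\sqrt{t^2+4t}$, hence $\sqrt{t^2+4t}\,g_{\pm}'=\pm\tfrac p2\,g_{\pm}$; differentiating once more and clearing the square root shows each $g_{\pm}$, and therefore $\psi$, satisfies \eqref{eq=equa_diff_de_psi} on $\R^+$. For \eqref{eq=dse_psi}: in the variable $s=\sqrt t$ one has $u_{\pm}=\tfrac12\bigl(s^2\pm s\sqrt{s^2+4}\bigr)$, which are analytic near $s=0$ and interchanged by $s\mapsto-s$, so $\psi$ is an even analytic function of $s$, hence analytic in $t$ near $0$, with radius of convergence in $t$ equal to $4$ (the nearest singularity being at $s^2+4=0$, i.e. $t=-4$). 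Plugging $\psi=\sum_na_nt^n$ into \eqref{eq=equa_diff_de_psi} and setting the coefficient of $t^n$ to zero yields the recursion $2(n+1)(2n+1)\,a_{n+1}=\bigl(\tfrac{p^2}{4}-n^2\bigr)a_n$; with $a_0=\psi(0)=2$ this forces $a_n=\lambda_n$, and the ratio test confirms convergence for $|t|<4$.

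For \eqref{eq=positivite_psi}: put $P_N(t)=\sum_{n=0}^N\lambda_nt^n$, $R_N=\psi-P_N$, and let $L$ denote the operator in \eqref{eq=equa_diff_de_psi}. Because the $\lambda_n$ obey the recursion, all coefficients of $L[P_N]$ vanish except the leading one, so $L[P_N](t)=\bigl(N^2-\tfrac{p^2}{4}\bigr)\lambda_N\,t^N$; since $L[\psi]=0$ this gives, on $\R^+$, $L[R_N](t)=\bigl(\tfrac{p^2}{4}-N^2\bigr)\lambda_N\,t^N=2(N+1)(2N+1)\,\lambda_{N+1}\,t^N=:c_N\,t^N$. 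Now apply reduction of order with the positive homogeneous solution $g_-$: writing $R_N=g_-h$ and using $\sqrt{t^2+4t}\,g_-'/g_-=-\tfrac p2$, the equation $L[R_N]=c_Nt^N$ reduces to a first-order linear equation for $v:=h'$, which after multiplication by the integrating factor $\mu=\sqrt{t^2+4t}\,g_-^2$ becomes $(\mu v)'=c_N\,t^N g_-/\sqrt{t^2+4t}$. Since $R_N$ vanishes to order $N+1$ at $0$, the factor $h'$ stays bounded near $0$ while $\mu\to0$, so $\mu v\to0$ as $t\to0^+$ and hence $\mu(t)v(t)=\int_0^tc_N s^N g_-(s)\,ds/\sqrt{s^2+4s}$ for $t>0$; as the integrand has the constant sign of $c_N$, so has $v=h'$, then $h$ (because $h(0)=0$), and finally $R_N=g_-h$. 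Thus $R_N$ has the sign of $\lambda_{N+1}$ on $\R^+$, and one concludes by noting that $\lambda_{N+1}$ is a positive multiple of $\prod_{k=0}^N\bigl(\tfrac p2-k\bigr)$: this is positive when $p>2N$, zero when $\tfrac p2\in\{0,1,\dots,N\}$, and of sign $(-1)^{\lfloor N-p/2\rfloor+1}$ otherwise, which matches the stated dichotomy.

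The main obstacle is the third assertion. A direct maximum-principle argument for \eqref{eq=equa_diff_de_psi} fails, because the zeroth-order coefficient $-p^2/4$ and the source term $c_Nt^N$ carry the same sign, so an interior extremum of $R_N$ of the wrong sign yields no contradiction; it is the reduction of order through the positive solution $g_-$---together with the vanishing at $0$ of the boundary term, which relies on $R_N$ vanishing to order $N+1$ there---that turns the second-order equation into a manifestly sign-definite first-order integral. A secondary technical point is justifying the analyticity of $\psi$ and the precise radius $4$ in \eqref{eq=dse_psi}, handled by passing to the variable $\sqrt t$.
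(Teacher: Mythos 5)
Your proof is correct, and for parts (\ref{item=equa_diff_psi}) and (\ref{item=dse_psi}) it matches the paper's (very terse) verification: the factorization $\psi=g_++g_-$ with $(1+u_+)(1+u_-)=1$, the first-order relation $\sqrt{t^2+4t}\,g_\pm'=\pm\tfrac p2 g_\pm$, and the identification of $\lambda_n$ via the recursion $2(n+1)(2n+1)\lambda_{n+1}=(\tfrac{p^2}{4}-n^2)\lambda_n$ are all exactly what is needed. For part (\ref{item=positivite_psi}) you take a genuinely different route. The paper also starts from $L[R_N]=(2N+1)(2N+2)\lambda_{N+1}t^N$ and the sign of $\lambda_{N+1}$, but then invokes a purely qualitative comparison lemma (Lemma \ref{thm=equa_diff_qualitatif}): for $ay''+by'+cy=d$ with $a>0$, $c<0$, $d>0$, once $y$ and $y'$ are both positive at some $t_0$ they remain so, the point being that at the first zero of $y'$ one would have $y''\leq 0$ while the equation forces $y''=(d-cy)/a>0$. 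So your remark that a maximum-principle argument ``fails'' is true only of the naive version applied to an interior extremum of $R_N$ itself; tracking the sign of $R_N'$ as well makes it work, and this is what the paper does, using only the signs of $R_N$ and $R_N'$ near $0$ read off from the power series. Your reduction of order through the positive solution $g_-$ instead produces the explicit sign-definite representation $\mu(t)h'(t)=\int_0^t c_N s^N g_-(s)(s^2+4s)^{-1/2}\,ds$, which is more quantitative and arguably more transparent, at the cost of some care at the singular endpoint $t=0$: since $g_-'(t)\sim -\tfrac p4 t^{-1/2}$ is unbounded there, the boundedness of $h'=(R_N/g_-)'$ near $0$ is not solely a consequence of $R_N$ vanishing to order $N+1$ but also uses that $R_N g_-'=O(t^{N+1/2})$; this works out, but deserves the extra line. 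Both arguments are complete and give the stated dichotomy via the sign of $\prod_{k=0}^N(\tfrac p2-k)$.
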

In this proposition, for a real number $t$, the symbol $\lfloor t
\rfloor$ denotes the largest integer smaller than or equal to $t$.
\begin{proof}
  Checking (\ref{item=equa_diff_psi}) is just an easy computation, the
  details are left to the reader. It is also easy to see that $\psi$ has an
  expansion in power series around $0$, and (\ref{item=dse_psi}) follows
  from the fact that both left-hand and right-hand sides of
  \eqref{eq=dse_psi} satisfy \eqref{eq=equa_diff_de_psi} and have value $2$
  at $t=0$.

Let us prove (\ref{item=positivite_psi}). Let us fix $p$ and $N$. As a
function of $t$, the left-hand side of \eqref{eq=positivite_psi} satisfies
the following differential equation:
\[(t^2+4t)y'' + (t+2) y' - \frac{p^2}{4} y = (2N+1)(2N+2) \lambda_{N+1} t^N.\]

Moreover, (\ref{item=dse_psi}) shows that the left-hand side of
\eqref{eq=positivite_psi} and its derivative has the same sign as
$\lambda_{N+1}$ when $t$ is small (with $t>0$).

Note also that $\lambda_{N+1} \geq 0$ if $p \geq 2 N$ or if $\lfloor N
-\frac p 2 \rfloor$ is odd, that $\lambda_{N+1} \leq 0$ else.

The fact (\ref{item=positivite_psi}) thus follows from Lemma
\ref{thm=equa_diff_qualitatif} applied to $\pm\left(\psi(t) -
\sum_{n=0}^N \lambda_n t^n\right)$ depending on the sign of
$\lambda_{N+1}$.
\end{proof}
\begin{lemma}
\label{thm=equa_diff_qualitatif}
Let $a$, $b$, $c$ and $d$ be continuous functions on $\R^+$
such that for any $t>0$,
\begin{eqnarray*}
a(t)>0\\
c(t)<0\\
d(t)>0
\end{eqnarray*} 
Let $y$ be a $C^2$ function on $\R^+$ solution of $a y'' + b
y' + cy = d$, and $t_0>0$ such that $y(t_0)>0$ and
$y'(t_0)>0$. Then $y(t)>0$ for any $t \geq t_0$.
\end{lemma}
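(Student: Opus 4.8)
The plan is to prove this by a continuity/connectedness argument applied to the set where $y$ is positive and increasing. Concretely, I would introduce
\[
T = \{\, t \ge t_0 : y(s) > 0 \text{ and } y'(s) > 0 \text{ for all } s \in [t_0, t] \,\},
\]
and let $t^* = \sup T$ (possibly $+\infty$). Since $y(t_0)>0$ and $y'(t_0)>0$, by continuity $T$ contains an interval $[t_0, t_0+\eta)$, so $t^* > t_0$. The goal is to show $t^* = +\infty$, and the conclusion $y(t)>0$ for all $t\ge t_0$ follows immediately.

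Suppose for contradiction that $t^* < \infty$. On $(t_0, t^*)$ we have $y>0$ and $y'>0$. The key observation is to use the differential equation to control $y''$ on this interval: since $a(t) > 0$, we may write
\[
y''(t) = \frac{1}{a(t)}\Big( d(t) - b(t) y'(t) - c(t) y(t) \Big).
\]
On $(t_0,t^*)$ we have $d(t)>0$, $-c(t)>0$ so $-c(t)y(t)>0$, hence $d(t) - c(t)y(t) > 0$; the only possibly troublesome term is $-b(t)y'(t)$, since $b$ has no sign assumption. But here is the trick: $y'$ is positive and, being continuous on the compact interval $[t_0, t^*]$, stays bounded; and from the ODE $y''$ is then bounded on $[t_0,t^*]$ (all coefficients are continuous, hence bounded on this compact set, and $a$ is bounded away from zero there since it is continuous and strictly positive). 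Therefore $y'$ is Lipschitz on $[t_0,t^*]$, so $y'(t^*) = \lim_{t\to t^*} y'(t)$ exists and is $\ge 0$ by continuity; likewise $y(t^*) \ge y(t_0) > 0$ since $y$ is increasing on $[t_0,t^*)$. Thus $y(t^*)>0$ and $y'(t^*)\ge 0$.

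To close the argument I must upgrade $y'(t^*) \ge 0$ to the strict inequality $y'(t^*) > 0$, which would contradict the maximality of $t^*$ (by continuity $y$ and $y'$ would remain positive slightly past $t^*$). If $y'(t^*)>0$ we are done. If $y'(t^*) = 0$, then at $t = t^*$ the ODE gives
\[
a(t^*) y''(t^*) = d(t^*) - c(t^*) y(t^*) > 0,
\]
using $d(t^*)>0$, $-c(t^*)>0$, $y(t^*)>0$, and $b(t^*)y'(t^*)=0$; since $a(t^*)>0$ this forces $y''(t^*) > 0$. So $y'$ has a strict local minimum at $t^*$ with $y'(t^*)=0$, meaning $y'(t) < 0$ for $t$ slightly less than $t^*$ — but that contradicts $y'>0$ on $(t_0,t^*)$. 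Hence $y'(t^*)>0$, contradicting maximality, so $t^* = +\infty$ and the lemma follows.

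The main obstacle, and the only subtle point, is precisely that $b$ carries no sign hypothesis, so one cannot naively conclude $y''>0$ on the whole interval; the argument must instead extract just enough — boundedness of $y'$ on the closed interval and the sign of $y''$ at the single endpoint $t^*$ — to run the connectedness/maximality contradiction. Everything else (continuity of the coefficients, $a$ bounded below on compacta, extending $y'$ continuously to $t^*$) is routine.
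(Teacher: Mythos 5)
Your proof is correct and is essentially the paper's argument: both locate the first time $t^*$ (the paper calls it $t_1$) at which $y'$ could vanish, note that $y(t^*)\geq y(t_0)>0$ so the ODE forces $y''(t^*)=\bigl(d(t^*)-c(t^*)y(t^*)\bigr)/a(t^*)>0$ once the $b\,y'$ term drops out, and contradict the positivity of $y'$ just before $t^*$. The Lipschitz/boundedness digression is unnecessary since $y$ is assumed $C^2$ on $\R^+$, and ``strict local minimum'' is a slight misnomer ($y''(t^*)>0$ just says $y'$ is increasing through $t^*$), but the conclusion you draw from it is the correct one.
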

\begin{proof}
We prove that $y'(t) > 0$ for any $t \geq t_0$. Assume that it
is not true, and take $t_1 = \min\{t>t_0, y'(t)=0\}$. Since
$y'(t_1)=0$ and $y'(t) > 0$ if $t_0< t < t_1$, we have that
$y''(t_1) \leq 0$.

On the other hand, since $y' \geq 0$ on $(t_0,t_1)$, $y(t_1)\geq y(t_0)>0$.
Thus $y''(t_1) = (d(t_1) - c(t_1) y(t_1))/a(t_1)>0$, which is a
contradiction.
\end{proof}

It is now possible to derive the main result of this part:
\begin{lemma}
\label{thm=lien_entre_etre_dans_L2n_et_norme_p}
Let $0<p<\infty$ and $(\lambda_n)_{n \geq 0} \in \R^\N$ defined by
\eqref{eq=dse_psi}. Take $a \in L_p(\MM,\tau)$ such that $a^2=0$, and fix an
integer $N>0$ with $\lambda_{N}\neq 0$.

\begin{enumerate}
\item If $\|a\|_{2N-2}<\infty$.
\begin{equation}
\label{eq=pour_prouver_a_dans_L_2n}
 \|a\|_{2N}^{2N} \leq \liminf_{t\rightarrow 0} \frac{1}{2
    \lambda_{N} t^{2N}} \left(\|\un + ta\|_p^p + \|\un - ta\|_p^p - 2 - 2
    \sum_{n=1}^{N-1} \lambda_n t^{2n}\|a\|_{2n}^{2n}\right)
\end{equation}

\item 
Moreover,if $\|a\|_{2N} <\infty$, the previous inequality becomes an
equality. More precisely, 
\begin{equation}
\label{eq=pour_calculer_norme_2n_de_a}
  \|a\|_{2N}^{2N} =  \lim_{t\rightarrow 0} \frac{1}{2
    \lambda_{N} t^{2N}} \left(\|\un + ta\|_p^p + \|\un - ta\|_p^p - 2 - 2
    \sum_{n=1}^{N-1} \lambda_n t^{2n}\|a\|_{2n}^{2n}\right)
\end{equation}
\end{enumerate}
\end{lemma}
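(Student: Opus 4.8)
The plan is to reduce the statement to a scalar computation about the spectral distribution of $a^*a$, and then to invoke Fatou's lemma for part~(1) and the dominated convergence theorem for part~(2). First I would apply Corollary~\ref{thm=expression_de_p-puiss_avec_psi} to $ta$ in place of $a$ (which still satisfies $(ta)^2=0$); although that corollary is stated for $0<p<1$, its proof applies verbatim to every $0<p<\infty$, so that for all $t>0$
\[|\un+ta|^p+|\un-ta|^p+|\un+ta^*|^p+|\un-ta^*|^p=2\psi(t^2a^*a)+2\psi(t^2aa^*)-4\un.\]
Taking the trace $\tau$ and using the standard facts $\|x^*\|_p=\|x\|_p$ and $\tau(g(aa^*))=\tau(g(a^*a))$ (that is, $a^*a$ and $aa^*$ have the same $\tau$-distribution), this collapses to
\[\|\un+ta\|_p^p+\|\un-ta\|_p^p-2=2\tau\bigl(\psi(t^2a^*a)\bigr)-4.\]

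Let $\nu$ denote the distribution of $a^*a$ with respect to $\tau$, a probability measure on $\R^+$ with $\tau(g(a^*a))=\int g\,d\nu$ for $g\ge 0$ and $\|a\|_{2n}^{2n}=\int s^n\,d\nu(s)$, and write $R_k(u)=\psi(u)-\sum_{n=0}^{k}\lambda_n u^n$ for the Taylor remainders of $\psi$. Since $\|a\|_{2N-2}<\infty$ forces the moments $\int s^n\,d\nu$ with $n\le N-1$ to be finite, and since $\lambda_0=\psi(0)=2$ and $\int d\nu=1$, the partial sum $\sum_{n=0}^{N-1}\lambda_n t^{2n}\|a\|_{2n}^{2n}$ can be split off from $\tau(\psi(t^2a^*a))$, and the bracket whose limit is sought is seen to equal $2\int R_{N-1}(t^2s)\,d\nu(s)$ --- a quantity well defined in $[-\infty,\infty]$ because $R_{N-1}$ has constant sign on $\R^+$ by point~(\ref{item=positivite_psi}) of Proposition~\ref{thm=proprietes_de_psi}. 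The structural point is that this sign of $R_{N-1}$ coincides with the sign of $\lambda_N$: this is precisely the observation made in the proof of Proposition~\ref{thm=proprietes_de_psi} that $\lambda_{k+1}$ has the sign of $R_k$, applied with $k=N-1$.

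Hence $R_{N-1}(t^2s)/(\lambda_N t^{2N})\ge 0$ for every $s\ge 0$, and since the power series \eqref{eq=dse_psi} gives $R_{N-1}(u)/u^N\to\lambda_N$ as $u\to 0^+$, this quantity converges, for each fixed $s$, to $s^N$ as $t\to 0$. Part~(1) is then the scalar Fatou lemma applied to $R_{N-1}(t^2s)/(\lambda_N t^{2N})$: $\int s^N\,d\nu\le\liminf_{t\to 0}\int R_{N-1}(t^2s)/(\lambda_N t^{2N})\,d\nu(s)$, which is the asserted inequality. For part~(2), assuming moreover $\|a\|_{2N}<\infty$, \ie{} $\int s^N\,d\nu<\infty$, I would write $R_{N-1}(u)=\lambda_N u^N+R_N(u)$, split the integral, and be left with proving $\frac{1}{\lambda_N t^{2N}}\int R_N(t^2s)\,d\nu(s)\to 0$. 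The integrand tends to $0$ pointwise in $s$ because $R_N(u)=O(u^{N+1})$ near $0$; and using $|R_N(u)|\le C u^{N+1}$ for $0\le u\le 1$ together with $|R_N(u)|\le C u^{\max(p/2,\,N)}$ for $u\ge 1$ (the latter from $\psi(u)\le C(1+u^{p/2})$, valid because $(u+\sqrt{u^2+4u})/2\le u+1$ and $0\le 1+(u-\sqrt{u^2+4u})/2\le 1$), a short case analysis gives $\bigl|R_N(t^2s)/(\lambda_N t^{2N})\bigr|\le C's^N$ for all $0<t\le 1$ and all $s\ge 0$. Since $\int s^N\,d\nu<\infty$, the dominated convergence theorem finishes the proof, and combined with part~(1) the $\liminf$ there is a genuine limit equal to $\|a\|_{2N}^{2N}$.

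The step I expect to be the real obstacle is this uniform domination: the crude bound $|R_N(t^2s)|\le C(t^2s)^{N+1}$ controls only the region $\{t^2s\le 1\}$, where after dividing by $t^{2N}$ it survives as a factor $t^2 s^{N+1}$ that is $\nu$-integrable only thanks to $s^{N+1}\le t^{-2}s^N$ there; on the complementary region $\{t^2s>1\}$ one must instead feed in the growth $u^{\max(p/2,\,N)}$ of $R_N$ at infinity and check that, after division by $t^{2N}$ and taking the supremum over the admissible values of $t$, one again lands under a multiple of $s^N$. Everything else is a direct appeal to Corollary~\ref{thm=expression_de_p-puiss_avec_psi} and the sign information of Proposition~\ref{thm=proprietes_de_psi}, or a routine use of the scalar Fatou and dominated convergence theorems; alternatively the measure-theoretic steps could be carried out at the operator level via the noncommutative Fatou lemma and the Fack--Kosaki dominated convergence theorem of \cite{MR840845}, as in the proofs of Theorem~\ref{thm=isometrie_Lp_est_isom_sur_L2} and Lemma~\ref{thm=egalite_entre_normeP_et_trace_avec_adjoint_nb}, but since $a^*a$ is a single positive operator the reduction to its scalar spectral distribution is the cleanest route.
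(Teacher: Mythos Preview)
Your approach is essentially the paper's: reduce to the spectral measure of $a^*a$, invoke Fatou's lemma on the nonnegative quantity $R_{N-1}(t^2s)/(\lambda_N t^{2N})$ for part~(1), and apply dominated convergence for part~(2). The paper does the same (phrasing the commutative reduction as ``the operators $b(t,a)$ are affiliated with the commutative von Neumann algebra generated by $a^*a$'') and records the dominating bound simply as $|R_N(u)|\le C(u^N+u^{p/2})$.

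One small slip in your domination step: the claimed uniform majorant $\bigl|R_N(t^2s)/(\lambda_N t^{2N})\bigr|\le C's^N$ fails when $p>2N$. In that regime, on $\{t^2s>1\}$ your own estimate $|R_N(u)|\le C u^{p/2}$ yields $C\,t^{p-2N}s^{p/2}$, and $t^{p-2N}s^{p/2-N}=(t^2s)^{p/2-N}$ is unbounded as $t^2s\to\infty$. The fix is immediate: dominate instead by $C'(s^N+s^{p/2})$, which is $\nu$-integrable because $a\in L_p$ gives $\int s^{p/2}\,d\nu=\|a\|_p^p<\infty$. This is precisely what the paper's bound $C(u^N+u^{p/2})$ encodes; with that correction your argument is complete and coincides with the paper's.
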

\begin{proof}
The first fact is a consequence of the properties of $\psi$ and of Fatou's
lemma.

Denote by $b(t,a)$ the following (unbounded) operator affiliated with $\MM$:
\[b(t,a) = \frac{1}{\lambda_N t^{2N}}\left(\psi(t^2 a^* a) -
  \sum_{n=0}^{N-1} \lambda_n t^{2n} (a^* a)^n \right).\] In this equation,
$(a^* a)^0$ is equal to $\un_\MM$. Note that the operators $b(t,a)$ are
affiliated with the commutative von Neumann algebra generated by $a^*a$,
which is isomorphic to the space of (classes of) bounded measurable
functions on some probability space $(\Omega, \mu)$.

Then \eqref{eq=dse_psi} implies that $b(t,a) \rightarrow (a^*a)^N$ in the
measure topology as $t \rightarrow \infty$ (in fact the convergence holds
almost surely if the operators are viewed as functions on $\Omega$). But
\eqref{eq=positivite_psi} also implies that $b(t,a) \geq 0$.  Thus one can
apply Fatou's lemma to conclude that
\begin{equation}
\label{eq=consequence_de_fatou}
\|a\|_{2N}^{2N} = \tau((a^*a)^N) \leq \liminf_{t \rightarrow 0}
\tau(b(t,a)).
\end{equation}

Replace $a$ by $a^*$ in the preceding inequality, and add the two equations
to get (using $\|a^*\|_q = \|a\|_q$ for any real $q$)
\[ 2\|a\|_{2N}^{2N} \leq \liminf_{t \rightarrow 0} \frac{1}{\lambda_{N}
  t^{2N}} \tau \left( \psi(t^2 a^* a) + \psi(t^2 a a^*) - \sum_{n=0}^{N-1}
  \lambda_n t^{2n} ((a^* a)^n + (a a^*)^n) \right).\]

Applying Corollary \ref{thm=expression_de_p-puiss_avec_psi} and the
linearity of the trace yields to the desired conclusion (since $(a a^*)^n$
and $(a^* a)^n$ belong to $L_1(\MM)$ for $n \leq N-1$).

To prove the second fact, we prove that if $\|a\|_{2N} < \infty$, then
equality holds in \eqref{eq=consequence_de_fatou}. But this follows from
the (classical) dominated convergence theorem since $\left|\psi(t) -
  \sum_{n=0}^N \lambda_n t^n \right| \leq C (t^N + t^{p/2})$ for some
constant $C$ not depending on $t \in \R$.
\end{proof}

The proof of Theorem \ref{thm=equivalence_entre_dans_L2n_et_image} follows:

\begin{proof}[Proof of Theorem
  \ref{thm=equivalence_entre_dans_L2n_et_image}]
First note that the statement for $n= \infty$ follows from the one for
$n \in \N$, since $\|x\|_\infty = \lim_{n \rightarrow \infty}
\|x\|_{2n}$. So we focus on the case when $n$ is a positive integer.

  The idea is to construct operators related to $x$ and $y$ of zero
  square by putting then in a corner of a $2$ by $2$ matrix, and then
  to use Lemma \ref{thm=lien_entre_etre_dans_L2n_et_norme_p}. So let
  us denote $a(x)$ and $a(y)$ the operators
\begin{eqnarray*}
a(x) = \begin{pmatrix}
0 & x\\0 & 0 \end{pmatrix} \in \M{2}{L_p(\MM)} \simeq L_p(\M 2 \MM)\\
a(y) = \begin{pmatrix}
0 & y\\0 & 0 \end{pmatrix} \in \M{2}{L_p(\MM)} \simeq L_p(\M 2 \MM)
\end{eqnarray*}
Note that $a(x)^2=0$ that for any $q\in \R \cup \{\infty\}$, $\|a(x)\|_q =
2^{- \nicefrac 1 q} \|x\|_q$, and that the same holds for $y$. Moreover
$\|\un +t a(x)\|_p = \|\un + t a(y)\|_p$ for any $t\in \R$. It is thus
enough to prove that if $\|a(x)\|_{2n}<\infty$, then $\|a(y)\|_{2n}<\infty$
and $\|a(y)\|_{2n} = \|a(x)\|_{2n}$. We prove this by induction on $n$.

So take $N>0$, assume that the aforementioned statement holds for any
$n<N$. (note that we assume nothing if $N=1$). 
Suppose that $\|a(x)\|_{2N}<\infty$. Then by induction hypothesis for any
$n<N$, $\|y\|_{2n} = \|x\|_{2n}$. Thus the right-hand side of
\eqref{eq=pour_prouver_a_dans_L_2n} is the same when $a$ is replaced by
$a(y)$ or by $a(x)$. But for $a=a(x)$, it is equal, by
\eqref{eq=pour_calculer_norme_2n_de_a}, to $\|a(x)\|_{2N}$. Hence
\eqref{eq=pour_prouver_a_dans_L_2n} proves that $\|a(y)\|_{2N} \leq
\|a(x)\|_{2N} < \infty$.

Applying \eqref{eq=pour_calculer_norme_2n_de_a} again with $a(y)$ yields to
$\|a(y)\|_{2N}=\|a(x)\|_{2N}$.
\end{proof}

\section{Proof of Theorem \ref{thm=thm_principal}}
\label{section=preuve_resultat_principal}
In this section we develop some consequences of Lemma
\ref{thm=egalite_entre_normeP_et_trace_avec_adjoint}. We are given
$(\MM,\tau)$ and $(\NN, \widetilde \tau)$ two von Neumann algebras with
normal faithful tracial states.

Let $x_1, \dots x_n \in \MM$ and $y_1, \dots y_n \in \NN$. The
\nc{} analogue (in the bounded case) of Theorem
\ref{thm=theoreme_1_rudin} is:
\begin{thm}
\label{thm=equirepartition}
Let $0< p <\infty$ such that $p \neq 2,4,6 \dots$ is not an even integer.
Suppose that for all $m \in \N$ and all $a_1 \dots a_n \in \Mm$,
\[\|\un + \sum a_i \otimes x_i\|_p = \|\un + \sum a_i \otimes y_i\|_p.\]

Then the $n$-uples $(x_1, \dots x_n)$ and $(y_1,\dots y_n)$ have the
same $*$-distributions. More precisely, for all $P \in \C\left\langle
X_1, \dots X_{2n} \right\rangle$ polynomial in $2n$ non commuting
variables,
\begin{equation}
\label{eq=egalite_des_distributions}
\tau\paren{P(x_1,\dots x_n,x_1^*,\dots x_n^*)} = \widetilde
\tau\paren{P(y_1,\dots y_n,y_1^*,\dots y_n^*)}.
\end{equation}
\end{thm}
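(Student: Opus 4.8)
The goal is to show that the equality of $p$-norms for all matrix coefficients forces equality of all $*$-moments $\tau(x_{i_1}^{\varepsilon_1}\dots x_{i_k}^{\varepsilon_k}) = \widetilde\tau(y_{i_1}^{\varepsilon_1}\dots y_{i_k}^{\varepsilon_k})$. The key tool is Lemma \ref{thm=egalite_entre_normeP_et_trace_avec_adjoint}: for a fixed word length $n$ and a fixed sign pattern $\varepsilon = (\varepsilon_1,\dots,\varepsilon_n)$, once we pick matrices $a_1,\dots,a_n$ satisfying the combinatorial property \eqref{eq=propriete_combinatoire_des_eij}, the trace $\tau(x_1^{\varepsilon_1}\dots x_n^{\varepsilon_n})$ is recovered (up to an explicit nonzero constant) as the coefficient of $\prod_j z_j^{\varepsilon_j}$ in the power-series expansion of $\|S_z\|_p^p$, where $S_z = \un + \sum_j z_j a_j^{\varepsilon_j}\otimes x_j$. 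Since the same matrices $a_j$ and the same sign pattern can be used on the $\NN$ side, and since the hypothesis gives $\|S_z^{(x)}\|_p = \|S_z^{(y)}\|_p$ for all $z$ (this is the hypothesis applied to the matrix $X = \sum_j z_j a_j^{\varepsilon_j}\otimes x_j$, of size $m$), the two traces must agree.

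\textbf{Step 1: reduce to a single word.} An arbitrary polynomial $P$ in the $2n$ variables $x_i, x_i^*$ is a linear combination of monomials. By relabeling (repeating the $x_i$'s and $y_i$'s, i.e.\ replacing the list $(x_1,\dots,x_n)$ by a longer list in which each variable may appear several times, exactly as in the proof of Lemma \ref{thm=cas_auto_adjoint}), it suffices to prove \eqref{eq=egalite_des_distributions} for a single monomial of the form $x_1^{\varepsilon_1} x_2^{\varepsilon_2}\dots x_k^{\varepsilon_k}$ with a fixed sign pattern $\varepsilon \in \{1,*\}^k$. So fix $k$ and $\varepsilon$.

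\textbf{Step 2: apply the combinatorial lemma.} Choose $m \geq k/2$ and matrices $a_1,\dots,a_k \in \Mm$ satisfying \eqref{eq=propriete_combinatoire_des_eij} (possible by Remark \ref{rem=propriete_combinatoire_des_eij}). Define $\alpha = \alpha(\varepsilon)$ as in Lemma \ref{thm=lemme_combinatoire}. Then Lemma \ref{thm=egalite_entre_normeP_et_trace_avec_adjoint} gives
\[
\frac{d^{(k)}}{dr_1\cdots dr_k}\varphi_x(0,\dots,0) = \tau(x_1^{\varepsilon_1}\cdots x_k^{\varepsilon_k}) \sum_{j=0}^{\alpha}(k-j)\binom{\nicefrac p2}{k-j}\binom{\alpha}{j},
\]
where $\varphi_x$ is built from $\|S_z^{(x)}\|_p^p$ as in the statement of that lemma, and the analogous identity with $y$ in place of $x$ and $\varphi_y$ in place of $\varphi_x$. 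Now for every $z = (r_1 e^{i\theta_1},\dots,r_k e^{i\theta_k})$, the operator $\sum_j z_j a_j^{\varepsilon_j}\otimes x_j$ lies in $\M{m}{E}$, so the hypothesis \eqref{eq=hypothese_p_c.isometrie} — applied with $n$ replaced by $m$ and $X$ replaced by this operator — gives $\|S_z^{(x)}\|_p = \|S_z^{(y)}\|_p$. Hence $\varphi_x \equiv \varphi_y$ on a neighborhood of $0$, so their mixed partials at $0$ coincide.

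\textbf{Step 3: cancel the constant.} It remains to observe that the scalar $c(p,k,\alpha) := \sum_{j=0}^{\alpha}(k-j)\binom{\nicefrac p2}{k-j}\binom{\alpha}{j}$ is nonzero. This is where the hypothesis $p \notin 2\N$ is used: I would argue (as is done implicitly in the paper, and explicitly for small cases in the remarks after Theorem \ref{thm=thm_principal}) that $\binom{\nicefrac p2}{\ell} \neq 0$ for every $\ell \in \N$ precisely because $p/2$ is not a nonnegative integer, so each term $(k-j)\binom{\nicefrac p2}{k-j}$ with $j < k$ is nonzero; a sign/positivity or generic-argument then shows the weighted sum with the positive weights $\binom{\alpha}{j}$ does not vanish. (One clean route: view $c(p,k,\alpha)$ as a nonzero polynomial-type function of $p$ and note it is exactly the quantity computed in Lemma \ref{thm=egalite_entre_normeP_et_trace_avec_adjoint_nb}; it equals $k\binom{\nicefrac p2}{k}$-type expressions whose only real zeros are at even integers.) Dividing both moment expressions by $c(p,k,\alpha)$ yields $\tau(x_1^{\varepsilon_1}\cdots x_k^{\varepsilon_k}) = \widetilde\tau(y_1^{\varepsilon_1}\cdots y_k^{\varepsilon_k})$, which is \eqref{eq=egalite_des_distributions} for our monomial. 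Summing over monomials finishes the proof.

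\textbf{Main obstacle.} The genuinely delicate point is Step 3: verifying that the combinatorial constant $c(p,k,\alpha)$ never vanishes when $p$ is not an even integer. For $\alpha$ small this is transparent, but in general one must check that the various binomial coefficients cannot conspire to cancel; this is the place where the arithmetic hypothesis on $p$ is really consumed, and where I would spend the most care. Everything else — the reduction to monomials, the choice of the $a_j$, and the passage through Lemma \ref{thm=egalite_entre_normeP_et_trace_avec_adjoint} — is routine given the machinery already developed.
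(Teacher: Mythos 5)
Your overall strategy is exactly the paper's: reduce to a single monomial, recover the $*$-moment from the $p$-norms via Lemma \ref{thm=egalite_entre_normeP_et_trace_avec_adjoint} with matrices satisfying \eqref{eq=propriete_combinatoire_des_eij}, and divide out the constant. Steps 1 and 2 are correct and complete.

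Step 3, however, contains a genuine gap, and the heuristics you offer to close it do not work. The issue is that the generalized binomial coefficients $\binom{\nicefrac p2}{N-j}$ alternate in sign once $N-j$ exceeds $\nicefrac p2$ (each additional factor $\nicefrac p2 - i$ with $i > \nicefrac p2$ is negative), so the sum $c(p,N,\alpha)=\sum_{j=0}^{\alpha}(N-j)\binom{\nicefrac p2}{N-j}\binom{\alpha}{j}$ is a genuinely alternating sum: positivity of the weights $\binom{\alpha}{j}$ proves nothing, and term-wise nonvanishing is irrelevant. A genericity argument only shows the sum, as a polynomial in $p$, vanishes for at most finitely many $p$; the theorem needs nonvanishing for \emph{every} $p\notin 2\N$, so one must identify the exceptional set exactly. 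The paper does this in Lemma \ref{thm=lemme_coeff_binomial_non_nul}: since $(N-j)\binom{\nicefrac p2}{N-j}=\frac p2\binom{\nicefrac p2-1}{N-j-1}$, the sum equals $\frac p2\,P(\nicefrac p2-1)$ with $P(\beta)=\sum_{j=0}^{\alpha}\binom{\beta}{N-j-1}\binom{\alpha}{j}$ a polynomial of degree $N-1$ in $\beta$; one then exhibits $N-1$ integer roots, namely every integer $\beta$ with $-\alpha\leq\beta\leq N-\alpha-2$ (the range $0\leq\beta\leq N-\alpha-2$ is immediate from the vanishing of the binomial coefficients, while $\beta=-l$ for $1\leq l\leq\alpha$ uses the identity $\sum_{j=0}^{\alpha}\binom{\alpha}{j}(-1)^j j^i=0$ for $i<\alpha$, which applies because $\binom{N-j+l-2}{l-1}$ has degree $l-1<\alpha$ in $j$; here the constraint $\alpha\leq N/2$, automatic for cyclic patterns, is needed so that these roots are distinct and within range). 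Since a degree-$(N-1)$ polynomial has at most $N-1$ roots, every root corresponds to $\nicefrac p2\in\Z$, i.e.\ $p$ an even integer. Without an argument of this kind your Step 3 is an assertion, not a proof, and it is precisely the step where the hypothesis $p\notin2\N$ is consumed.
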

This theorem relies on Lemma
\ref{thm=egalite_entre_normeP_et_trace_avec_adjoint} and on the following
Lemma:
\begin{lemma}
\label{thm=lemme_coeff_binomial_non_nul}
Let $N,\alpha \in \N$ be integers such that $N\geq 1$ and $\alpha \leq
N/2$. Then if $p$ is a positive number such that $p \notin 2\N$ or $p \geq 2(N-
\alpha)$, then 
\[\sum_{k=0}^\alpha (N-k) \binom{\nicefrac p 2}{N-k}\binom{\alpha}{k} \neq 0.\]
\end{lemma}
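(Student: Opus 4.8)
The plan is to analyze the sum $\Sigma(p) \egdef \sum_{k=0}^\alpha (N-k) \binom{\nicefrac p 2}{N-k}\binom{\alpha}{k}$ as a function of $\beta = p/2$. Writing $(N-k)\binom{\beta}{N-k} = \beta\binom{\beta-1}{N-k-1}$, one sees that $\Sigma$ is, up to the factor $\beta$, a polynomial in $\beta$ of degree $N-1$ (the top term coming from $k=0$). Hence $\Sigma(p)$ vanishes for at most $N-1$ values of $\beta$, i.e.\ at most $N-1$ values of $p$; the point is to locate them and check they all lie in the ``forbidden'' set. First I would observe that the hypothesis $\alpha \leq N/2$ guarantees $N - k \geq N - \alpha \geq \alpha \geq k$ for $0 \le k \le \alpha$, so all the binomial coefficients $\binom{\alpha}{k}$ and the shapes $\binom{\beta-1}{N-k-1}$ behave well; in particular when $\beta$ is a positive integer $\geq N-\alpha$ the whole sum is a sum of strictly positive terms (each factor $\binom{\beta-1}{N-k-1}$ is a genuine positive binomial coefficient since $N-k-1 \le N-1 \le \beta - 1$ requires care — actually we only need $\beta - 1 \ge N-k-1$, i.e.\ $\beta \ge N-k$, which for the smallest relevant $N-k = N-\alpha$ is exactly the hypothesis), so $\Sigma(p) > 0$ and we are done in that regime.

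The remaining task is the case $p \notin 2\N$, i.e.\ $\beta \notin \N$: I must show $\Sigma(\beta) \ne 0$ for non-integer $\beta$. The cleanest route is to identify $\Sigma$ with a derivative or a coefficient extracted from a generating function. Concretely, $\sum_{j} \binom{\beta}{j} t^j = (1+t)^\beta$, so $\sum_j (N-k)\binom{\beta}{N-k} x^{N-k}$ is, term by term, a truncation; more usefully, using the Vandermonde-type convolution, $\Sigma(\beta)$ should be expressible as $N \binom{\beta + \text{something}}{N}$ minus correction terms, or directly via $\sum_{k}\binom{\alpha}{k}\binom{\beta}{N-k} = \binom{\alpha+\beta}{N}$ (Vandermonde). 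Differentiating the Vandermonde identity $\sum_k \binom{\alpha}{k}\binom{\beta}{N-k} = \binom{\alpha+\beta}{N}$ with respect to the variable that produces the factor $(N-k)$ — e.g.\ writing $\binom{\beta}{N-k} = \oint (1+t)^\beta t^{-(N-k)-1}\,dt/(2\pi i)$ and pulling out a $t\,\partial_t$ — I expect to get $\Sigma(\beta) = $ an explicit combination of $\binom{\alpha+\beta}{N}$ and $\binom{\alpha+\beta-1}{N-1}$, something like $\Sigma(\beta) = (\beta)\binom{\alpha+\beta-1}{N-1} - \alpha\binom{\alpha+\beta-1}{N-1} + \dots$; once in closed form as a product of linear factors in $\beta$, its zeros are visibly at integer or half-understood values, and one checks none is a non-integer.

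The main obstacle will be pinning down the exact closed form: the factor $(N-k)$ breaks the clean Vandermonde symmetry, so I expect to need either the contour-integral / $t\partial_t$ manipulation above or an Abel-type summation by parts splitting $(N-k)\binom{\beta}{N-k}$ as $\beta\binom{\beta-1}{N-k-1}$ and then re-applying Vandermonde to $\sum_k \binom{\alpha}{k}\binom{\beta-1}{N-1-k} = \binom{\alpha+\beta-1}{N-1}$, giving $\Sigma(\beta) = \beta\binom{\alpha+\beta-1}{N-1}$ outright. If that identity holds, the lemma is immediate: $\beta\binom{\alpha+\beta-1}{N-1} = \frac{p}{2}\cdot\frac{(\alpha+\beta-1)(\alpha+\beta-2)\cdots(\alpha+\beta-N+1)}{(N-1)!}$ vanishes precisely when $p=0$ or $\alpha + \beta \in \{1,2,\dots,N-1\}$, i.e.\ $\beta \in \{1-\alpha,\dots,N-1-\alpha\}$; for $\beta$ a non-integer this never happens, and for $\beta$ a positive integer it forces $\beta \le N-1-\alpha < N - \alpha$, excluded by the hypothesis $p \ge 2(N-\alpha)$. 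Thus in both allowed cases $\Sigma(p) \ne 0$. I would double-check the summation-by-parts step carefully, since the endpoint term (the $k=\alpha$ contribution and the shift $N-k-1 \geq 0$) is exactly where the hypothesis $\alpha \le N/2$ is used to keep all indices in range.
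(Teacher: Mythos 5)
Your proof is correct, and its key step is genuinely different from the paper's. Both arguments begin the same way, with the absorption identity $(N-k)\binom{\nicefrac p 2}{N-k} = \frac p2 \binom{\nicefrac p2 -1}{N-k-1}$, reducing the problem to the non-vanishing of a polynomial of degree $N-1$ in $\beta$. At that point the paper exhibits the $N-1$ integer roots one at a time: it checks directly that the polynomial vanishes at $\beta-1 \in \{0,\dots,N-\alpha-2\}$ and then at $\beta-1=-l$ for $1\le l\le\alpha$, the latter requiring the reflection formula for negative upper index together with the identity $\sum_k\binom{\alpha}{k}(-1)^k k^i=0$ for $i<\alpha$. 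You instead evaluate the sum in closed form by the Chu--Vandermonde convolution: since $\binom{\alpha}{k}=0$ for $k>\alpha$ (and $\alpha\le N-1$ follows from $\alpha\le N/2$), the truncated sum $\sum_{k=0}^{\alpha}\binom{\alpha}{k}\binom{\beta-1}{N-1-k}$ equals the full convolution $\binom{\alpha+\beta-1}{N-1}$, giving $\Sigma(p)=\frac p2\binom{\alpha+\nicefrac p2-1}{N-1}$ outright. (The identity you hedge on does hold: Chu--Vandermonde is a polynomial identity in the upper arguments, so it is valid for non-integer $\beta$.) The zero set is then read off from the explicit linear factors, and both branches of the hypothesis are checked exactly as you say. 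Your route is shorter, avoids the alternating-sum lemma entirely, and has the added benefit of showing the statement is sharp: the sum vanishes precisely for $\beta\in\{0\}\cup\{1-\alpha,\dots,N-1-\alpha\}$. Two minor cosmetic points: the first-paragraph claim that all terms are "strictly positive" when $\beta\ge N-\alpha$ is an integer is slightly off (terms with small $k$ can vanish; only the $k=\alpha$ term is guaranteed positive), but that paragraph is superseded by the closed-form argument; and the contour-integral speculation in the middle paragraph can simply be deleted.
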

\begin{proof}
Take $\alpha, N $ and $p$ as in the Lemma. Since $(N-k) \binom{\nicefrac p
2}{N-k} = {\nicefrac p 2} \binom{\nicefrac p 2 -1}{N-k-1}$, showing the
Lemma is the same as showing that
\begin{equation}
\label{eq=truc_a_montrer_non_nul}
\sum_{k=0}^\alpha \binom{\nicefrac p 2-1}{N-k -1 }\binom{\alpha}{k}
\neq 0.
\end{equation}

For every real number $\beta$, let us consider the left-hand side of
\eqref{eq=truc_a_montrer_non_nul} where $\nicefrac p 2-1$ is replaced by
$\beta$. Since $\binom{\beta}{n}$ is a polynomial function in $\beta$ of
degree $n$, the expression $P(\beta) \egdef \sum_{k=0}^\alpha \binom{\beta}{N-k -1
}\binom{\alpha}{k}$ is a polynomial in $\beta$ of degree $N-1$. To prove
that it takes nonzero values for $\beta = p/2 -1$, we show that it has
$N-1$ roots different from $p/2 -1$. More precisely, we show that if
$\beta$ is an integer such that $-\alpha \leq \beta \leq N-\alpha -2$, then
$P(\beta)=0$. 

First if $\beta$ is an integer between $0$ and $N-\alpha -2$ included, then
for any $0 \leq k \leq \alpha $, it is immediate to check from the
definition \eqref{eq=def_coeff_binomial} that $\binom{\beta}{N-k-1}=0$,
which implies $P(\beta)=0$.

The second fact to check is that if $l$ is an integer such that $1 \leq l
\leq \alpha$, then $P(-l)=0$. Let us fix such an $l$. Then writing
$\binom{-l}{N-k-1} = (-1)^{N-k-1}\binom{N-k+l-2}{l-1}$ we get
\[ P(-l) = \sum_{k=0}^\alpha \binom{-l}{N-k-1}\binom{\alpha}{k} =
\sum_{k=0}^\alpha\binom{\alpha}{k} (-1)^{N-k-1}\binom{N-k+l-2}{l-1}.\]

  It only remains to note that $l$ and $N$ being fixed,
  $\binom{N-k+l-2}{l-1}$ is (as a function of $k$) a polynomial of degree
  $l-1 < \alpha$. The equality $P(-l)=0$ arises from the fact that if $1
  \leq i < \alpha$,
\[ \sum_{k=0}^\alpha \binom{\alpha}{k} (-1)^k k^i = 0.\]
\end{proof}
Theorem \ref{thm=equirepartition} follows:
\begin{proof}[Proof of Theorem \ref{thm=equirepartition}]
By linearity it is enough to prove
\eqref{eq=egalite_des_distributions} when $P$ is a monomial. The fact
to be proved is that for every finite sequence $i_1, \dots i_N$ of
indices between $1$ and $n$, and for every sequence $\varepsilon_1,
\dots \varepsilon_N \in \left\{1,*\right\}$,
\[\tau\paren{\prod_k x_{i_k}^{\varepsilon_k}} = 
\widetilde \tau \paren{\prod_k y_{i_k}^{\varepsilon_k}}.\]

But from lemma \ref{thm=egalite_entre_normeP_et_trace_avec_adjoint}, if
$\alpha$ is the number of indices $k$ such that $\varepsilon_k = *$ and
$\varepsilon_{k +1 \mod N} = 1$, we have
\[\tau\paren{\prod_k x_{i_k}^{\varepsilon_k}} \sum_{k=0}^\alpha (N-k)
\binom{\nicefrac p 2}{N-k}\binom{\alpha}{k} = \widetilde \tau
\paren{\prod_k y_{i_k}^{\varepsilon_k}} \sum_{k=0}^\alpha (N-k)
\binom{\nicefrac p 2}{N-k}\binom{\alpha}{k} .\]

This implies that $\tau\paren{\prod_k x_{i_k}^{\varepsilon_k}} = \widetilde
\tau \paren{\prod_k y_{i_k}^{\varepsilon_k}}$ since from Lemma
\ref{thm=lemme_coeff_binomial_non_nul} $\sum_{k=0}^\alpha (N-k)
\binom{\nicefrac p 2}{N-k}\binom{\alpha}{k} \neq 0$ if $p \notin 2\N$.
\end{proof}

Theorem \ref{thm=thm_principal} is an immediate consequence of Theorem
\ref{thm=equirepartition}, Theorem
\ref{thm=equivalence_entre_dans_L2n_et_image} and of the following
well-known lemma:
\begin{lemma}
\label{thm=lemme_equidistribution_implique_isomorphisme}
Let $(\MM,\tau)$ and $(\NN,\widetilde \tau)$ be two von Neumann
algebras equipped with faithful normal tracial states, and let
$(x_i)_{i\in I} \in \MM$ and $(y_i)_{i\in I} \in \NN$ be \nc{} random
variables that have the same $*$-distribution. Then the von Neumann
algebras generated respectively by the $x_i$'s and the $y_i$'s are
isomorphic, with a normal isomorphism sending $x_i$ on $y_i$ and
preserving the trace.
\end{lemma}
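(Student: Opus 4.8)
The plan is to reduce the statement to the standard GNS/$L_2$ construction. First I would replace $\MM$ by $M:=VN((x_i))$ and $\NN$ by $N:=VN((y_i))$, noting that $\tau$ and $\widetilde\tau$ restrict to faithful normal tracial states on $M$ and $N$; let $\mathcal A\subset M$ and $\mathcal B\subset N$ denote the (non-closed) unital $*$-algebras algebraically generated by the $x_i$'s and the $y_i$'s, so that $M$ and $N$ are their respective $\sigma$-weak closures. The first step is to produce a trace-preserving $*$-isomorphism $\theta:\mathcal A\to\mathcal B$ with $\theta(x_i)=y_i$. For this I would factor through the free unital $*$-algebra $\mathcal P=\C\langle X_i,X_i^*\rangle$: the canonical surjections $\Phi:\mathcal P\to\mathcal A$ and $\Psi:\mathcal P\to\mathcal B$ have the same kernel, since for $P\in\mathcal P$ the scalar $\tau(\Phi(P)^*\Phi(P))=\tau(\Phi(P^*P))$ is a fixed $\C$-linear combination of $*$-moments of $(x_i)$, hence by hypothesis equals $\widetilde\tau(\Psi(P^*P))$, and the chain $\Phi(P)=0\iff\tau(\Phi(P)^*\Phi(P))=0\iff\widetilde\tau(\Psi(P)^*\Psi(P))=0\iff\Psi(P)=0$ (outer equivalences by faithfulness of $\tau,\widetilde\tau$, middle one by hypothesis) gives $\ker\Phi=\ker\Psi$ and hence $\theta$. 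The same ``fixed combination of moments'' observation yields $\widetilde\tau\circ\theta=\tau$ on $\mathcal A$.

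Next I would implement $\theta$ spatially. Realizing $M$ and $N$ in standard form on $L_2(M,\tau)$ and $L_2(N,\widetilde\tau)$ — legitimate because $\tau,\widetilde\tau$ are faithful and normal, so the GNS representations are faithful normal $*$-isomorphisms onto von Neumann algebras — with cyclic and separating vectors $\hat\un_M,\hat\un_N$, I would first check that $\mathcal A\,\hat\un_M$ is $\|\cdot\|_2$-dense: the orthogonal projection onto its closure lies in $\mathcal A'=M'$ (as $\mathcal A$ generates $M$, so $\mathcal A''=M$) and fixes the separating vector $\hat\un_M$, hence equals $\un$; likewise for $\mathcal B\,\hat\un_N$. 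Since $\widetilde\tau\circ\theta=\tau$, the map $a\,\hat\un_M\mapsto\theta(a)\,\hat\un_N$ ($a\in\mathcal A$) is isometric with dense domain and dense range, so it extends to a unitary $U:L_2(M,\tau)\to L_2(N,\widetilde\tau)$ with $U\hat\un_M=\hat\un_N$. A short computation on the dense set $\mathcal B\,\hat\un_N$ then shows $UaU^{*}=\theta(a)$ (as a left-multiplication operator) for every $a\in\mathcal A$, so $\mathrm{Ad}(U)$ — a normal $*$-isomorphism of $\B{L_2(M,\tau)}$ onto $\B{L_2(N,\widetilde\tau)}$ — carries $\mathcal A$ onto $\mathcal B$ and therefore, taking $\sigma$-weak closures, carries $M$ onto $N$. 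The restriction $\iota:M\to N$ is then a normal $*$-isomorphism with $\iota(x_i)=y_i$, and trace preservation is immediate: $\widetilde\tau(\iota(a))=\scal{UaU^{*}\hat\un_N}{\hat\un_N}=\scal{a\hat\un_M}{\hat\un_M}=\tau(a)$ for all $a\in M$.

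The result is classical, so I do not expect a genuine obstacle; the step requiring the most care will be the passage from the $*$-algebra isomorphism $\theta$ to an isomorphism of the \emph{generated von Neumann algebras} — that is, checking that the standard-form manipulations (density of $\mathcal A\,\hat\un_M$ in $L_2$, normality of the GNS representation, and normality of conjugation by $U$ so that it commutes with taking $\sigma$-weak closures) correctly identify $VN((x_i))$ acting on $L_2(M,\tau)$ with $VN((x_i))\subset\MM$. Everything else reduces to the routine fact that coincidence of all $*$-moments forces coincidence of the relations and of the trace values at the purely algebraic level.
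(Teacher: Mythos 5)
Your proof is correct and complete: factoring through the free unital $*$-algebra to obtain the trace-preserving $*$-isomorphism $\theta$ between the generated $*$-algebras (using faithfulness of the traces to identify the kernels), then implementing $\theta$ spatially via the GNS/standard-form unitary $U$ and passing to $\sigma$-weak closures, is the standard argument for this statement. The paper gives no proof at all — it records the lemma as well-known — so there is no alternative route to compare against; your write-up correctly supplies the two points that genuinely need care, namely the $L_2$-density of $\mathcal{A}\,\hat{\un}_M$ (via the fact that the cyclic vector is separating for the commutant) and the normality of $\mathrm{Ad}(U)$, which is what lets the isomorphism of the dense $*$-algebras pass to the generated von Neumann algebras.
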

\begin{proof}[Proof of Theorem \ref{thm=thm_principal}]
Let $(x_i)_{i \in I}$ be a family spanning $E$. If $y_i = u(x_i)$ for
any $i \in I$, then Theorem
\ref{thm=equivalence_entre_dans_L2n_et_image} shows that
$\|y_i\|_\infty<\infty$, which is equivalent to the fact that $y_i \in
\NN$. By Theorem \ref{thm=equirepartition}, the families
$(x_i,x_i^*)$ and $(y_i,y_i^*)$ have the same distribution
and so by Lemma
\ref{thm=lemme_equidistribution_implique_isomorphisme}, $u$ extends to
a trace preserving isomorphism between the von Neumann algebras
generated by the $x_i$'s and $y_i$'s respectively.
\end{proof}

It is also possible to get some approximation results using
ultraproducts:

\subsection{Approximation results}

\begin{corollaire}
Let $(\MM_\alpha,\tau_\alpha)_{\alpha \in A}$ be a net of von Neumann
algebras equipped with normal faithful normalized traces. Let $I$ be a
set, and for all $\alpha$, let $(x_i^\alpha)_{i\in I} \in
{\MM_\alpha}^I$ such that for all $i \in I$, the net
$(x_i^\alpha)_{\alpha}$ is uniformly bounded, \ie{}
$\sup_\alpha \|x_i^\alpha\| < \infty$. Assume that there is a family
$(y_i)_{i \in I}$ in a von Neumann algebra $(\NN,\widetilde \tau)$ and
a $p \notin 2 \N$ such that for all integer $n$ and all finitely
supported family $(a_i)_{i \in I} \in M_n$, the following holds:
\begin{equation}
\label{eq=limite_des_normes_p}
 \lim_{\alpha} \|1 + \sum_i a_i \otimes x_i^\alpha\|_p =
 \|1 + \sum_i a_i \otimes y_i\|_p. 
\end{equation}

Then the net $\left((x_i^\alpha)_{i} \right)_\alpha$ converges in
$*$-distribution to $(y_i)_i$. Moreover \eqref{eq=limite_des_normes_p}
holds with $p$ replaced by any $0<q<\infty$.
\end{corollaire}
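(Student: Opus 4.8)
The plan is to realize the limit family $(y_i)_{i\in I}$ inside a tracial ultraproduct of the $\MM_\alpha$'s and then invoke Theorem~\ref{thm=equirepartition}. First I would fix an ultrafilter $\mathcal{U}$ on the directed index set $A$ that refines the order filter, i.e. contains every tail $\{\beta:\beta\geq\alpha\}$; along such a $\mathcal{U}$ the limit $\lim_{\mathcal{U}}$ agrees with the limit of the net whenever the latter exists. Let $(\MM_{\mathcal{U}},\tau_{\mathcal{U}})=\prod_{\mathcal{U}}(\MM_\alpha,\tau_\alpha)$ be the von Neumann algebra ultraproduct, a von Neumann algebra with a normal faithful normalized trace for which $\tau_{\mathcal{U}}\big((a_\alpha)_{\mathcal{U}}\big)=\lim_{\mathcal{U}}\tau_\alpha(a_\alpha)$ on uniformly bounded families. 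Since $\sup_\alpha\|x_i^\alpha\|<\infty$ for each $i$, the family $(x_i^\alpha)_\alpha$ defines an element $x_i=(x_i^\alpha)_{\mathcal{U}}\in\MM_{\mathcal{U}}$, with $x_i^*=\big((x_i^\alpha)^*\big)_{\mathcal{U}}$.

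The key elementary fact is that for a uniformly bounded family $Z_\alpha\in M_n\otimes\MM_\alpha$ one has $\|(Z_\alpha)_{\mathcal{U}}\|_q=\lim_{\mathcal{U}}\|Z_\alpha\|_q$ for every $0<q<\infty$: indeed $t\mapsto t^{q/2}$ is continuous on the compact interval $[0,\sup_\alpha\|Z_\alpha\|^2]$, so continuous functional calculus gives $|(Z_\alpha)_{\mathcal{U}}|^q=(|Z_\alpha|^q)_{\mathcal{U}}$, and then $\|(Z_\alpha)_{\mathcal{U}}\|_q^q=(\tr_n\otimes\tau_{\mathcal{U}})\big((|Z_\alpha|^q)_{\mathcal{U}}\big)=\lim_{\mathcal{U}}\|Z_\alpha\|_q^q$. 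Applying this with $Z_\alpha=\un+\sum_i a_i\otimes x_i^\alpha$ and using the hypothesis \eqref{eq=limite_des_normes_p} (and $\lim_{\mathcal{U}}=\lim_\alpha$), I obtain, for every $n$ and every finitely supported $(a_i)\in M_n$,
\[\Big\|\un+\sum_i a_i\otimes x_i\Big\|_p=\Big\|\un+\sum_i a_i\otimes y_i\Big\|_p .\]
Hence Theorem~\ref{thm=equirepartition} (applicable because $p\notin 2\N$), applied to each finite subfamily of $I$, shows that $(x_i)_{i\in I}\subset\MM_{\mathcal{U}}$ and $(y_i)_{i\in I}\subset\NN$ have the same $*$-distribution.

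Next I would unwind the ultraproduct. For every non-commutative polynomial $P$ in the variables $(X_i,X_i^*)_{i\in I}$, the element $P(x_i^\alpha,(x_i^\alpha)^*)$ is uniformly bounded in $\alpha$, so $\tau_{\mathcal{U}}\big(P(x_i,x_i^*)\big)=\lim_{\mathcal{U}}\tau_\alpha\big(P(x_i^\alpha,(x_i^\alpha)^*)\big)$, and this equals $\widetilde\tau\big(P(y_i,y_i^*)\big)$ by the previous step. The quantity $\widetilde\tau(P(y_i,y_i^*))$ does not depend on $\mathcal{U}$; since the complex numbers $\tau_\alpha\big(P(x_i^\alpha,(x_i^\alpha)^*)\big)$ stay in a fixed bounded subset of $\C$ and every refinement of the order filter yields the same limit, the net itself converges, $\lim_\alpha\tau_\alpha\big(P(x_i^\alpha,(x_i^\alpha)^*)\big)=\widetilde\tau\big(P(y_i,y_i^*)\big)$, which is precisely convergence of $\big((x_i^\alpha)_i\big)_\alpha$ to $(y_i)_i$ in $*$-distribution. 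For the final clause, fix $0<q<\infty$ and a finitely supported $(a_i)\in M_n$. By Lemma~\ref{thm=lemme_equidistribution_implique_isomorphisme} there is a trace-preserving $*$-isomorphism carrying the $x_i$ to the $y_i$; amplifying by $M_n$ it carries $Z:=\un+\sum_i a_i\otimes x_i$ to $W:=\un+\sum_i a_i\otimes y_i$, whence $\|Z\|_q=\|W\|_q$ for every $q$. Combining with $\|Z\|_q=\lim_{\mathcal{U}}\|\un+\sum_i a_i\otimes x_i^\alpha\|_q$ and, once more, with the independence of the limit from $\mathcal{U}$, I conclude $\lim_\alpha\|\un+\sum_i a_i\otimes x_i^\alpha\|_q=\|\un+\sum_i a_i\otimes y_i\|_q$.

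The only genuinely delicate points are the two standard facts about ultraproducts used above: that $L_q$-norms and traces of uniformly bounded families pass to the $\mathcal{U}$-limit (it is the continuity of the functional calculus that makes the range $0<q<1$ unproblematic), and the elementary observation that a bounded net of scalars converges as soon as all its limits along ultrafilters refining the order filter coincide. Everything else is a direct application of Theorem~\ref{thm=equirepartition} and Lemma~\ref{thm=lemme_equidistribution_implique_isomorphisme}.
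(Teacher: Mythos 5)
Your proof is correct and follows essentially the same route as the paper: pass to a tracial ultraproduct along an ultrafilter refining the net, transfer the $p$-norm identities, apply Theorem~\ref{thm=equirepartition} to get equality of $*$-distributions, and use the independence of the limit from the choice of ultrafilter to recover convergence of the net. You merely spell out two steps the paper leaves implicit (the compatibility of $q$-norms of uniformly bounded families with the ultraproduct, and the deduction of the final clause via Lemma~\ref{thm=lemme_equidistribution_implique_isomorphisme}), both of which are handled correctly.
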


\begin{proof}
Indeed let $\mathcal U$ be any ultraproduct on $A$ finer that the net
$(\alpha)$, and for $i \in I$ consider $x_i$ the image of
$(x_i^\alpha)_{\alpha \in A}$ in the von Neumann ultraproduct $\MM =
\prod_{\mathcal U} \MM_\alpha$. If $\MM$ is equipped with the tracial
state $\tau = \lim_{\mathcal U} \tau_\alpha$, then the assumption
\eqref{eq=limite_des_normes_p} implies that for all $m$ and all $a_i
\in M_m$,
\[\|1 + \sum_i a_i \otimes x_i\|_p = \|1 + \sum_i a_i \otimes y_i\|_p.\]

Lemma \ref{thm=egalite_entre_normeP_et_trace_avec_adjoint} implies
that $(x_i)_i$ and $(y_i)_i$ have the same $*$-distribution. This
exactly means that $(x_i^\alpha)_{i}$ converges in $*$-distribution to
$(y_i)_i$ as $\alpha \in \mathcal U$.

Since this holds for any ultrafilter $\mathcal U$ finer than the net
$(\alpha)$, this proves the convergence in $*$-distribution of the net
$\left((x_i^\alpha)_{i} \right)_\alpha$ to $(y_i)_i$.  The fact that
\eqref{eq=limite_des_normes_p} then holds with $p$ replaced by any
$0<q<\infty$ is immediate.
\end{proof}

Theorem \ref{thm=thm_principal} can also be reformulated in the
operator space setting:

\subsection{Reformulation in the operator space setting}
\label{part=espace_doperateurs}
Let $\mathcal M \subset \B{H}$ be a von Neumann algebra equipped with
a normal faithful trace $\tau$ satisfying $\tau(\un)=1$. Let $E$ be a
linear subspace of $\mathcal M$. There are several ``natural'' operator space
structures on $E$:

For all $1\leq p \leq \infty$, the \nc{} $L_p$-spaces $L_p(\mathcal
M,\tau)$ are equipped with a natural operator space structure (see
\cite[chapter 7]{MR2006539}). (when $p = \infty$, $L_p(\mathcal M,\tau)$ is
the von Neumann algebra $\mathcal M$ with its obvious operator space
structure).

Then the linear embedding $E \subset L_p(\mathcal M,\tau)$ allows to
define, for all $1\leq p \leq \infty$, an operator space structure on
$E$, which we denote by $O_p(E)$.

In this setting, Theorem \ref{thm=thm_principal} states that if $E$ is
a linear subspace of $\mathcal M$ containing the unit and if $1\leq
p<\infty$ and $p \notin 2 \N$, then the operator space structure
$O_p(E)$ together with the unit entirely determines the von Neumann
algebra generated by $E$ and the trace on it. In particular it
determines all of the other operator space structures $O_q(E)$ for all
$1\leq q \leq \infty$.

More precisely:
\begin{corollaire}
\label{thm=deuxieme_corollaire}
Let $\un_\mathcal M \in E \subset \mathcal M$ be as above, $(\mathcal
N,\widetilde \tau)$ be another von Neumann algebra equipped with a
normal faithful tracial state, $u:E \rightarrow \mathcal N$ be a
unit preserving linear map and $1 \leq p<\infty$ with $p \notin 2 \N$.

If $u:O_p(E) \rightarrow L_p(\mathcal N,\widetilde \tau)$ is a
complete isometry, then $u$ uniquely extends to an isomorphism between
the von Neumann subalgebras generated by $E$ and its image; moreover
$u$ is then trace preserving. In particular, for all $1\leq q \leq
\infty$, $u: O_q(E) \rightarrow L_q(\mathcal N,\widetilde \tau)$ is a
complete isometry.
\end{corollaire}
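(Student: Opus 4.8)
The plan is to reduce Corollary \ref{thm=deuxieme_corollaire} almost directly to Theorem \ref{thm=thm_principal}, the only real work being the translation between the operator-space language and the matricial $p$-norm identity \eqref{eq=hypothese_p_c.isometrie}. First I would recall that, by definition of the operator space structure $O_p(E)$, the norm of a matrix $X \in \M{n}{E}$ in $O_p(E)$ is exactly $\|X\|_{L_p(\Mn \otimes \MM, \tr_n \otimes \tau)}$; likewise the matrix norms on $L_p(\NN,\widetilde\tau)$ are the $L_p(\Mn\otimes\NN)$-norms. Hence the assumption that $u:O_p(E)\to L_p(\NN,\widetilde\tau)$ is a complete isometry means precisely that for every $n$ and every $X\in\M n E$,
\[
\|X\|_{L_p(\Mn\otimes\MM)} = \|(\id\otimes u)(X)\|_{L_p(\Mn\otimes\NN)}.
\]
Applying this with $X$ replaced by $\un_n\otimes\un_\MM + X$ (which lies in $\M n E$ since $\un_\MM\in E$, and whose image under $\id\otimes u$ is $\un_n\otimes\un_\NN+(\id\otimes u)(X)$ because $u$ is unital) yields exactly \eqref{eq=hypothese_p_c.isometrie}. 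Since $E\subset\MM = L_\infty(\MM)$ by hypothesis, the boundedness condition of Theorem \ref{thm=thm_principal} holds as well.

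Next I would invoke Theorem \ref{thm=thm_principal} directly: it gives that $u(E)\subset L_\infty(\NN)$, that $u$ extends (uniquely) to a trace-preserving von Neumann algebra isomorphism $\widetilde u:VN(E)\to VN(u(E))$, and in particular that $u$ agrees with the $*$-algebra structure on the part of $E$ where that makes sense. This is the bulk of the statement; nothing further is needed for the first two assertions beyond citing that theorem.

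It remains to deduce the ``in particular'' clause, that $u:O_q(E)\to L_q(\NN,\widetilde\tau)$ is a complete isometry for every $1\le q\le\infty$. For this I would argue as follows. Fix $n$ and $X\in\M n E$. The extension $\widetilde u$ induces, by tensoring with $\id_{\Mn}$, a trace-preserving $*$-isomorphism $\id\otimes\widetilde u: \Mn\otimes VN(E)\to \Mn\otimes VN(u(E))$ intertwining $\tr_n\otimes\tau$ and $\tr_n\otimes\widetilde\tau$; it sends $X$ to $(\id\otimes u)(X)$. A trace-preserving $*$-isomorphism between tracial von Neumann algebras is automatically isometric for every $L_q$-norm, $1\le q\le\infty$ (for $q=\infty$ this is the automatic isometry of a $*$-isomorphism; for finite $q$ one has $\|a\|_q^q = (\tr_n\otimes\tau)(|a|^q)$ and both $|a|^q$ and its trace are preserved, using the continuous functional calculus). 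Hence $\|X\|_{L_q(\Mn\otimes\MM)} = \|(\id\otimes u)(X)\|_{L_q(\Mn\otimes\NN)}$, which is precisely the statement that $u:O_q(E)\to L_q(\NN,\widetilde\tau)$ is completely isometric.

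The only genuinely delicate point is the bookkeeping in the first paragraph: one must be careful that ``complete isometry for $O_p(E)$'' is by definition the family of identities over all matrix sizes $n$, so that after substituting $\un_n\otimes\un + X$ one recovers \emph{exactly} the hypothesis of Theorem \ref{thm=thm_principal} and not something weaker (e.g.\ for a fixed $n$ only), which the paper explicitly notes would be insufficient. Beyond that, the argument is a formal unwinding of definitions plus the cited theorem, so I would expect the write-up to be short.
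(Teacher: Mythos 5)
Your overall strategy --- reduce to Theorem \ref{thm=thm_principal} and then upgrade the conclusion to all $q$ via the trace-preserving $*$-isomorphism --- is the same as the paper's, and your handling of the ``in particular'' clause is fine (indeed more detailed than the paper, which leaves it implicit). But there is a genuine gap in the very step you flag as the only delicate one. You assert that ``by definition of the operator space structure $O_p(E)$, the norm of a matrix $X\in \M n E$ in $O_p(E)$ is exactly $\|X\|_{L_p(\Mn\otimes\MM,\,\tr_n\otimes\tau)}$.'' That is not the definition, and it cannot be: the matrix norms of an operator space must satisfy Ruan's axioms, in particular $\|x\oplus y\|_{M_{2n}(X)}=\max(\|x\|_{M_n(X)},\|y\|_{M_n(X)})$, whereas the $L_p$-norms of block-diagonal matrices combine $\ell_p$-wise, not as a maximum. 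The matrix norms $M_n(O_p(E))$ are the genuine operator-space norms coming from the structure on $L_p(\MM,\tau)$ defined in \cite[chapter 7]{MR2006539} (by interpolation), and a priori they have nothing to do with $\|\cdot\|_{L_p(\tr_n\otimes\tau)}$.

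The correct bridge --- and this is precisely what the paper's proof consists of --- is the pair of results from Pisier's theory of vector-valued noncommutative $L_p$-spaces \cite{MR1648908}: (i) a map between operator spaces is completely isometric if and only if $\id\otimes u:S_p^n(X)\to S_p^n(Y)$ is isometric for all $n$ (Lemma 1.7 there), and (ii) the Fubini identification $S_p^n\bigl(L_p(\MM,\tau)\bigr)\simeq L_p(\Mn\otimes\MM,\tr_n\otimes\tau)$ (Theorem 1.9 there). Only after invoking both does the hypothesis ``$u:O_p(E)\to L_p(\NN,\widetilde\tau)$ is a complete isometry'' translate into the family of identities \eqref{eq=hypothese_p_c.isometrie} over all matrix sizes. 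The same issue reappears at the end of your argument: having established the $L_q(\Mn\otimes\MM)$-norm identities for all $n$, you need (i) and (ii) again (for the exponent $q$) to conclude that $u:O_q(E)\to L_q(\NN,\widetilde\tau)$ is a \emph{complete} isometry in the operator-space sense. With those two citations inserted at both places, your proof matches the paper's.
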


\begin{proof}
The proof is a reformulation of Theorem \ref{thm=thm_principal} once we
know the two following results from the theory of \nc{}
vector valued $L_p$-spaces developed in \cite{MR1648908}:

A map $u:X->Y$ between two operator spaces is completely isometric if
and only if for all $n$, the map $u \otimes id:S_p^n(X) \rightarrow
S_p^n(Y)$ is an isometry (Lemma 1.7 in \cite{MR1648908}). More
precisely, for any $n \in \N$ and any $1 \leq p \leq \infty$,
\[\left\|u \otimes id:S_p^n(X) \rightarrow S_p^n(Y)\right\| =
   \left\|u \otimes id: \M{n}{X} \rightarrow \M n Y\right\|\]
The second result is Fubini's theorem, which states that
$S_p^n\bigl(L_p(\mathcal M,\tau)\bigr) \simeq L_p \bigl(\Mn \otimes A,
\tr_n \otimes \tau \bigr)$ isometrically (and even completely
isometrically, but this is of no use here). See Theorem 1.9 in
\cite{MR1648908}.

These two results together prove that the hypotheses in Corollary
\ref{thm=deuxieme_corollaire} imply those in Theorem
\ref{thm=thm_principal}, and thus the result is proved.
\end{proof}

\subsection{On the necessity of taking matrices of arbitrary size}
Here we discuss the necessity of taking matrices of arbitrary size in
Theorem \ref{thm=thm_principal}. In view of Theorem
\ref{thm=thm_principal} a natural question is thus:

Let $p \in \R$. Consider the class $\mathcal E_{p,1}$ of all linear
maps $u$ between subspaces of \nc{} $L_p$ spaces constructed on von
Neumann algebras equipped with a \nff{} normalized trace. Is there an
integer $n$ such that for any such $u: E \rightarrow F$, if
\eqref{eq=hypothese_p_c.isometrie} holds for all $x \in \M{n}{E}$, then 
it holds for any $m$ and any $x \in \M{m}{E}$? The smallest such
integer will be denoted by $n_{p,1}$.

A similar question is:

Let $p \in \R$. Consider the class $\mathcal E_{p}$ of all linear maps
$u$ between subspaces of \nc{} $L_p$ spaces constructed on von Neumann
algebras equipped with a normal semifinite faithful normalized
trace. Is there an integer $n$ such that for any such $u \in \mathcal
E_p$, if $u$ is $n$-isometric, then $u$ is completely isometric? The
smallest such integer will be denoted by $n_p$.

As was noted in the introduction, the transposition map from $\Mn$ to
$\Mn$ ($n \geq 2$) shows that, except for $p=2$, we necessarily have
$n_{p,1}>1$ and $n_{p} >1$.

When $p \notin 2 \N$, it is not clear whether $n_{p,1}<\infty$ (or
$n_p <\infty$).

In the opposite direction, as announced in the introduction, when $p = 2m
\in 2 \N$, then it is not hard to prove that $n_{p,1} \leq m$ and $n_p \leq
m$.

\begin{thm}
\label{thm=p=2m_m_isometrie_implique_c.isom}
Let $p = 2 m \in 2\N$. Let $(\MM,\tau)$, $(\NN,\widetilde \tau)$ be as
in Theorem \ref{thm=thm_principal}.

Let $E \subset L_p(\MM,\tau)$ be a subspace and $u:E \rightarrow
L_p(\NN,\widetilde\tau)$ be a linear map.

Assume that for all $x \in \M{m}{E}$, the following
equality between the $p$-norms holds:
\begin{equation}
\label{eq=egalite_des_normes_p_ppair}
\|\forall x \in \M{m}{E}, \ \ \ \un_m \otimes \un_\MM + x\|_{2m} = \|\un_m \otimes \un_\NN + (\id\otimes
u)(x)\|_{2m}.
\end{equation}

Then in fact this equality holds for $x \in \M n E$ for every $n \in \N$:
\[\|\un_n \otimes \un_\MM + x\|_{2m} = \|\un_n \otimes \un_\NN + (\id\otimes
u)(x)\|_{2m}.\]
\end{thm}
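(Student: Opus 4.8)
The plan is to exploit that, since $p=2m$ is an even integer, $\|\un_n\otimes\un_\MM+X\|_{2m}^{2m}=\tau^{(n)}\big(((\un+X)^*(\un+X))^m\big)$ is a \emph{polynomial} expression in $X$ and $X^*$. Expanding $(\un+X^*+X+X^*X)^m$ and writing $X=\sum_k a_k\otimes x_k$ with $x_k\in E$ and $a_k$ complex matrices, one gets
\[\|\un_n\otimes\un_\MM+X\|_{2m}^{2m}=\sum_w c_w\,\tr_n\big(a_{k_1}^{\varepsilon_1}\cdots a_{k_\ell}^{\varepsilon_\ell}\big)\,\tau\big(x_{k_1}^{\varepsilon_1}\cdots x_{k_\ell}^{\varepsilon_\ell}\big),\]
where $w=\big((k_1,\varepsilon_1),\dots,(k_\ell,\varepsilon_\ell)\big)$ runs over the ``words'' obtained by concatenating at most $m$ ``bricks'' chosen among $X$, $X^*$, $X^*X$ (so $\ell\le 2m$), and the constants $c_w$ depend only on $m$, not on $n$ nor on the $x_k$. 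The same expansion, with the same words $w$ and the same $c_w$, holds for $\|\un_n\otimes\un_\NN+(\id\otimes u)(X)\|_{2m}^{2m}$, with $\tau(x_{k_1}^{\varepsilon_1}\cdots)$ replaced by $\widetilde\tau(u(x_{k_1})^{\varepsilon_1}\cdots)$. Hence it suffices to prove that for every such word
\[\tau\big(x_{k_1}^{\varepsilon_1}\cdots x_{k_\ell}^{\varepsilon_\ell}\big)=\widetilde\tau\big(u(x_{k_1})^{\varepsilon_1}\cdots u(x_{k_\ell})^{\varepsilon_\ell}\big).\]

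To prove this, I would first observe that the hypothesis \eqref{eq=egalite_des_normes_p_ppair} for size $m$ yields the analogous norm equality for every size $\mu\le m$: applying it to $\mathrm{diag}(x',0)\in\M{m}{E}$ for $x'\in\M{\mu}{E}$ and using the block-diagonal identity $\|\un_m+\mathrm{diag}(x',0)\|_{2m}^{2m}=\tfrac{\mu}{m}\|\un_\mu+x'\|_{2m}^{2m}+\tfrac{m-\mu}{m}$ (and the same on the $\NN$ side) gives $\|\un_\mu+x'\|_{2m}=\|\un_\mu+(\id\otimes u)(x')\|_{2m}$. Then, fixing a word $w$ and setting $\tilde x_j:=x_{k_j}$ for $j=1,\dots,\ell$, I would use Remark \ref{rem=propriete_combinatoire_des_eij} to pick matrices $a_1,\dots,a_\ell$ of size $\lceil\ell/2\rceil\le m$ satisfying \eqref{eq=propriete_combinatoire_des_eij}, form $S_z=\un+\sum_{j}z_j a_j^{\varepsilon_j}\otimes\tilde x_j\in\M{\lceil\ell/2\rceil}{E}$, and apply Lemma \ref{thm=egalite_entre_normeP_et_trace_avec_adjoint} (in the form valid for $p$ an even integer and $x_j\in L_p$, cf.\ Remark \ref{rem=ppair_on_a_meme_chose}, or Lemma \ref{thm=egalite_entre_normeP_et_trace_avec_adjoint_nb}) both to $(\tilde x_j)$ in $\MM$ and to $(u(\tilde x_j))$ in $\NN$. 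By the previous observation the two resulting functions $\varphi$ coincide, so the corresponding derivatives at $0$ coincide, giving
\[\tau\big(\tilde x_1^{\varepsilon_1}\cdots\tilde x_\ell^{\varepsilon_\ell}\big)\,C=\widetilde\tau\big(u(\tilde x_1)^{\varepsilon_1}\cdots u(\tilde x_\ell)^{\varepsilon_\ell}\big)\,C,\qquad C:=\sum_{k=0}^{\alpha}(\ell-k)\binom{m}{\ell-k}\binom{\alpha}{k},\]
with $\alpha=\alpha(\varepsilon)$ as in Lemma \ref{thm=lemme_combinatoire}. It remains to check $C\neq 0$, which by Lemma \ref{thm=lemme_coeff_binomial_non_nul} holds once we know $\alpha\le\ell/2$ and $2m\ge 2(\ell-\alpha)$.

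The step I expect to require the most care is the combinatorial inequality $\alpha\ge\ell-m$ — and this is exactly where the restriction to matrices of size $m$ in the hypothesis is used. If the word $w$ uses $b\le m$ nontrivial bricks, of which $s$ are the length-two brick $X^*X$, then $\ell=(b-s)+2s=b+s$, so $s=\ell-b\ge\ell-m$; each such brick occupies two consecutive positions bearing the pattern $(*,1)$, and these $s$ occurrences of $(*,1)$ inside $w$ are pairwise distinct, so $\alpha\ge s\ge\ell-m$. The bound $\alpha\le\ell/2$ is automatic, since no two cyclically consecutive indices $j$ can both satisfy $\varepsilon_j=*$, $\varepsilon_{j+1}=1$. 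With $C\neq 0$ in hand we obtain $\tau(x_w)=\widetilde\tau(u(x)_w)$ for every word $w$ appearing in the expansion (the degenerate length-$0$ word contributes $\tau^{(n)}(\un)=1$ on both sides and needs no discussion); substituting this back into the polynomial identity of the first paragraph gives $\|\un_n\otimes\un_\MM+X\|_{2m}=\|\un_n\otimes\un_\NN+(\id\otimes u)(X)\|_{2m}$ for all $n$ and all $X\in\M{n}{E}$, which is the assertion.
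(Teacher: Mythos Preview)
Your proof is correct and follows essentially the same strategy as the paper's: expand $\|\un+X\|_{2m}^{2m}$ as a polynomial in moments $\tau(x_{i_1}^{\varepsilon_1}\cdots x_{i_k}^{\varepsilon_k})$ with coefficients depending only on $m$ and $\varepsilon$, then use Lemma~\ref{thm=egalite_entre_normeP_et_trace_avec_adjoint} (via Remark~\ref{rem=ppair_on_a_meme_chose}) together with the size-$m$ hypothesis to match these moments on both sides.

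There is one small but genuine difference worth pointing out. You prove the stronger intermediate statement that $\tau(x_w)=\widetilde\tau(u(x)_w)$ for \emph{each} word $w$ appearing in the expansion, and for this you need $C\neq 0$, which you obtain from the combinatorial bound $\alpha\ge \ell-m$ and Lemma~\ref{thm=lemme_coeff_binomial_non_nul}. The paper avoids this detour: it observes (equation~\eqref{eq=decomp_de_norme_2m_comme_somme}) that the coefficient $c_\varepsilon$ multiplying $\tau(x_w)$ in the expansion is, up to the harmless factor $1/k$, \emph{exactly} the constant $C=\sum_j j\binom{m}{j}\binom{\alpha}{k-j}$ produced by Lemma~\ref{thm=egalite_entre_normeP_et_trace_avec_adjoint}. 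Hence the identity $C\,\tau(x_w)=C\,\widetilde\tau(y_w)$ coming from that lemma is already $c_\varepsilon\,\tau(x_w)=c_\varepsilon\,\widetilde\tau(y_w)$, with no need to decide whether $C$ vanishes. Your route gives a bit more information (equality of the individual moments), at the cost of the extra combinatorial step; the paper's route is shorter. Also, your reduction from size $m$ to sizes $\mu\le m$ is fine but not strictly necessary: by Remark~\ref{rem=propriete_combinatoire_des_eij} one can take the $a_j$'s of size exactly $m$ whenever $\ell\le 2m$, so the hypothesis can be invoked directly at size $m$.
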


\begin{thm}
\label{thm=p=2m_m_isometrie_implique_c.isom_semifini}
Let $p = 2 m \in 2\N$. Let $(\MM,\tau)$, $(\NN,\widetilde \tau)$ be
(exceptionally) von Neumann algebras with normal faithful
\emph{semifinite} traces.

Let $E \subset L_p(\MM,\tau)$ be a subspace and $u:E \rightarrow
L_p(\NN,\widetilde\tau)$ be a linear map.

If $u$ is $m$-isometric (\ie{} $\|x\|_{L_p(\tau^{(m)})} =
\|(\id\otimes u)(x)\|_{L_p(\widetilde \tau^{(m)})}$ for any $x \in
\M{m}{E}$) , then $u$ is completely isometric.
\end{thm}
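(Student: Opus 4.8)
The plan is to prove Theorem~\ref{thm=p=2m_m_isometrie_implique_c.isom_semifini} in two steps: first, reduce ``$u$ is completely isometric'' to the equality of all ``alternating'' $*$-moments of order $2m$ of a family and its image; second, extract each such moment from the $m$-isometry hypothesis alone, by evaluating the $2m$-norm on a carefully chosen matrix-valued linear combination of size $m$, built from the combinatorial matrices of Remark~\ref{rem=propriete_combinatoire_des_eij}.

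For the reduction, I would first record that since $p=2m$, for any $Z\in L_{2m}$ of a von Neumann algebra with normal semifinite faithful trace $\varphi$ one has $\|Z\|_{2m}^{2m}=\varphi\big((Z^{*}Z)^{m}\big)$, the product of the $2m$ factors lying in $L_{1}$ by H\"older's inequality. Writing a general element of $\M{m}{E}$ or of $\M{n}{E}$ as $X=\sum_{j=1}^{N}b_{j}\otimes x_{j}$ with $b_{j}\in M_{n}$ and $x_{1},\dots ,x_{N}\in E$, expansion of $(X^{*}X)^{m}$ gives
\[
\|X\|_{2m}^{2m}=\sum_{i_{1},k_{1},\dots ,i_{m},k_{m}\in\{1,\dots ,N\}}
\tr_{n}\!\big(b_{i_{1}}^{*}b_{k_{1}}\cdots b_{i_{m}}^{*}b_{k_{m}}\big)\;
\tau\!\big(x_{i_{1}}^{*}x_{k_{1}}\cdots x_{i_{m}}^{*}x_{k_{m}}\big),
\]
where the matrix coefficients depend only on $n$, $N$ and the $b_{j}$'s, not on $(\MM,\tau)$ or the $x_{j}$'s. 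The same formula, with $\widetilde\tau$ and $y_{j}:=u(x_{j})$, computes $\|(\id\otimes u)(X)\|_{2m}^{2m}$. Hence $u$ is completely isometric as soon as, for all $x_{1},\dots ,x_{2m}\in E$,
\[
\tau\big(x_{1}^{*}x_{2}x_{3}^{*}x_{4}\cdots x_{2m-1}^{*}x_{2m}\big)
=\widetilde\tau\big(u(x_{1})^{*}u(x_{2})u(x_{3})^{*}\cdots u(x_{2m})\big);
\]
call this identity $(\star)$.

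To prove $(\star)$ I would fix $x_{1},\dots ,x_{2m}\in E$, put $y_{j}=u(x_{j})$, and take the matrices $a_{1},\dots ,a_{2m}\in M_{m}$ of Remark~\ref{rem=propriete_combinatoire_des_eij} for $2m$ letters: $a_{2l-1}=e_{l,l}$ for $l=1,\dots ,m$, $a_{2l}=e_{l,l+1}$ for $l=1,\dots ,m-1$, and $a_{2m}=m\,e_{m,1}$. For $z=(z_{1},\dots ,z_{2m})\in\C^{2m}$ set $X_{z}=\sum_{j=1}^{2m}z_{j}\,a_{j}\otimes x_{j}\in\M{m}{E}$. Then $\|X_{z}\|_{2m}^{2m}=\tau^{(m)}\big((X_{z}^{*}X_{z})^{m}\big)$ is a polynomial in the $z_{j}$ and $\overline{z_{j}}$ (each factor $X_{z}^{*}X_{z}$ carries exactly one $z$ and one $\overline z$), and the key computation is that the coefficient of the monomial $\overline{z_{1}}z_{2}\overline{z_{3}}z_{4}\cdots\overline{z_{2m-1}}z_{2m}$ equals $m\,\tau\big(x_{1}^{*}x_{2}x_{3}^{*}\cdots x_{2m-1}^{*}x_{2m}\big)$. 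Granting this, the hypothesis that $u$ is $m$-isometric gives $\|X_{z}\|_{2m}=\|(\id\otimes u)(X_{z})\|_{2m}$ for every $z$, so the polynomials $\|X_{z}\|_{2m}^{2m}$ and $\|(\id\otimes u)(X_{z})\|_{2m}^{2m}$ coincide; comparing the coefficient of $\overline{z_{1}}z_{2}\cdots\overline{z_{2m-1}}z_{2m}$ yields $m\,\tau(x_{1}^{*}x_{2}\cdots x_{2m})=m\,\widetilde\tau(y_{1}^{*}y_{2}\cdots y_{2m})$, which is $(\star)$, and the proof is complete.

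The hard part is precisely this last coefficient computation, and it is where the choice of the $a_{j}$ matters. The monomial $\overline{z_{1}}z_{2}\cdots\overline{z_{2m-1}}z_{2m}$ forces the summation indices to satisfy $\{i_{1},\dots ,i_{m}\}=\{1,3,\dots ,2m-1\}$ and $\{k_{1},\dots ,k_{m}\}=\{2,4,\dots ,2m\}$. Since the $a_{j}$ are scaled matrix units whose adjoints are again matrix units, and crucially since the odd-indexed $a_{2l-1}$ are diagonal (so $a_{2l-1}^{*}=a_{2l-1}$), composing $a_{i_{1}}^{*}a_{k_{1}}a_{i_{2}}^{*}a_{k_{2}}\cdots$ forces successive row/column indices to match; a short induction then shows the matrix trace is nonzero only when $(i_{1},k_{1},\dots ,i_{m},k_{m})$ is obtained from $(1,2,3,\dots ,2m)$ by a cyclic rotation by an even number of positions, that for each of the $m$ such tuples the scalar equals $1$ (the single factor $a_{2m}=m\,e_{m,1}$ supplies the $m$ cancelling the $1/m$ in $\tr_{m}$), and that by the tracial property $\tau$ of the corresponding word equals $\tau(x_{1}^{*}x_{2}\cdots x_{2m-1}^{*}x_{2m})$. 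I would stress that the variables $z_{j}$ are essential rather than a convenience: because $(X^{*}X)^{m}$ always contains as many factors $x^{*}$ as factors $x$, a ``pure'' monomial $z_{1}z_{2}\cdots z_{2m}$ never occurs, and it is exactly the mixed monomial above that isolates the alternating moment $(\star)$ while discarding its complex conjugate and the other words present in $\|X_{z}\|_{2m}^{2m}$.
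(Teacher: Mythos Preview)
Your proof is correct and follows essentially the same approach as the paper's: reduce complete isometry to the equality of all alternating $*$-moments $\tau(x_{1}^{*}x_{2}\cdots x_{2m-1}^{*}x_{2m})=\widetilde\tau(y_{1}^{*}y_{2}\cdots y_{2m-1}^{*}y_{2m})$, then extract each such moment as a single coefficient of a polynomial $\|X_{z}\|_{2m}^{2m}$ built from the size-$m$ matrices of Remark~\ref{rem=propriete_combinatoire_des_eij}. The only cosmetic difference is that the paper places the conjugates directly in the test element, writing $x=\sum_{j}\big(\overline{z_{2j-1}}\,a_{2j-1}^{*}\otimes x_{2j-1}+z_{2j}\,a_{2j}\otimes x_{2j}\big)$ and reading off the coefficient of $z_{1}z_{2}\cdots z_{2m}$, whereas you take $X_{z}=\sum_{j}z_{j}a_{j}\otimes x_{j}$ and read off the coefficient of $\overline{z_{1}}z_{2}\overline{z_{3}}\cdots z_{2m}$; since the odd-indexed $a_{2l-1}=e_{l,l}$ are self-adjoint, the two computations unwind to the identical sum over permutations and give the same answer (your extra observation $a_{2l-1}^{*}=a_{2l-1}$ is exactly what bridges the two formulations). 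Your write-up is in fact more explicit than the paper's on both the reduction step and the counting of the $m$ surviving cyclic shifts.
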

\begin{rem}
  Note that Theorem \ref{thm=p=2m_m_isometrie_implique_c.isom} is not a
  formal consequence of Theorem
  \ref{thm=p=2m_m_isometrie_implique_c.isom_semifini}. Indeed, when $\un
  \notin E$, assuming \eqref{eq=egalite_des_normes_p_ppair} for any $x \in
  \M m E$ is stronger that assuming that $u: E \rightarrow L_p(\widetilde
  \tau)$ is $m$-isometric, and is weaker than assuming that the map
  $\widetilde u: span(\un,E) \rightarrow L_p(\widetilde \tau)$ that extends
  $u$ by $\widetilde u(\un) = \un$ is $m$-isometric. We therefore give a
  proof of the two results.
\end{rem}

We first provide the proof of Theorem
\ref{thm=p=2m_m_isometrie_implique_c.isom_semifini} which is simpler:
\begin{proof}[Proof of Theorem \ref{thm=p=2m_m_isometrie_implique_c.isom_semifini}]
  Assume that $u$ is $m$-isometric. It clearly suffices to prove that if
  $x_1, \dots x_{2m} \in E$ and $y_i = u(x_i)$, then
\[\tau(x_1^* x_2  x_3^* x_4 \dots x_{2m-1}^* x_{2m}) = \widetilde \tau 
(y_1^* y_2  y_3^* y_4 \dots y_{2m-1}^* y_{2m}).\]

But this is easy to get if one takes $a_1 \dots a_{2m} \in \Mm$
satisfying \eqref{eq=propriete_combinatoire_des_eij} and one applies
$\|x\|_{2m} = \|(\id \otimes u)(x)\|_{2m}$ to $x = x(z_1,\dots z_{2m})
\in \M m E$ defined by
\[ x = \sum_{j=1}^{m} \overline{z_{2j-1}} a_{2j-1}^* \otimes x_{2j-1} + 
  z_{2j} a_{2j} \otimes x_{2j-1}\]
for any $(z_1,\dots z_{2m}) \in \C^{2m}$.

Indeed, $\|x\|_{2m}^{2m}$ is a polynomial in the complex numbers
$z_j$ and $\overline{z_j}$, the coefficient in front of $z_1 z_2 \dots
z_{2m}$ is $\tau(x_1^* x_2 x_3^* x_4 \dots x_{2m-1}^* x_{2m})$.
\end{proof}

\begin{proof}[Proof of Theorem \ref{thm=p=2m_m_isometrie_implique_c.isom}]
  Roughly, the idea of the proof is the same as the previous one: the $2m$
  norm of $\un + \sum a_j \otimes x_j$, depends, as a function of the
  $x_j$'s, only on a finite number of moments of the $x_j$'s. And Lemma
  \ref{thm=egalite_entre_normeP_et_trace_avec_adjoint} shows that these
  moments can be computed from the $2m$-norm of $\un + y$ when $y$
  describes the set of $m \times m$ matrices with values in the linear
  space generated by the $x_j$'s.

But the description of these particular moments is not as simple as in
Theorem \ref{thm=p=2m_m_isometrie_implique_c.isom_semifini}, and the
computations are more complicate.

Take $x \in \M n E$, say $x = \un + \sum_{j=1}^N a_j \otimes x_j$ where
$a_j \in \Mn$ and $x_j \in E$. Denote by $y_j = u(x_j)$. First compute
\begin{eqnarray*}
\|\un + x\|_{2m}^{2m} & = &  \tau^{(n)}( \big(\un + x + x^* + x^* x\big)^m)
\end{eqnarray*}

The same kind of enumeration as in the proof of Lemma
\ref{thm=lemme_combinatoire} shows that for any integer $j$,
\[\tau^{(n)}\left(\big(x + x^* + x^* x\big)^j \right)= \\
\sum_{k=j}^{2j} \sum_{(\varepsilon_1,\dots,\varepsilon_k) \in \{1,*\}^k} 
\frac j k \binom{\alpha(\varepsilon)}{k-j} \tau^{(m)}\left(x^{\varepsilon_1}
x^{\varepsilon_2}\dots x^{\varepsilon_k}\right).
\]

Multiplying the above equation by $\binom m j$ and summing on $j$ yields to
\begin{multline}
\label{eq=decomp_de_norme_2m_comme_somme}
\tau^{(n)}\left(\big(\un + x + x^* + x^* x\big)^m \right)= \\
\sum_{k=0}^{2m} \sum_{
\begin{array}{c}
\varepsilon_1,\dots,\varepsilon_k \in \{1,*\}\\
i_1,\dots i_k \in \{1,\dots N\}
\end{array}
}
\tr_n\left(a_{i_1}^{\varepsilon_1}\dots a_{i_k}^{\varepsilon_k}\right)
\tau\left(x_{i_1}^{\varepsilon_1} \dots
  x_{i_k}^{\varepsilon_k}\right) \sum_{0\leq j \leq k} \frac j k
\binom{m}{j} \binom{\alpha(\varepsilon)}{k-j}.
\end{multline}

But the assumption \eqref{eq=egalite_des_normes_p_ppair} together with
Lemma \ref{thm=egalite_entre_normeP_et_trace_avec_adjoint} (and Remark
\ref{rem=ppair_on_a_meme_chose}) imply that for any $k \leq 2m$, any
$\varepsilon \in \{1,*\}^k$ and any $(i_1,\dots i_k) \in \{1,\dots N\}^k$,
\[ \tau\left(x_{i_1}^{\varepsilon_1} \dots x_{i_k}^{\varepsilon_k}\right)
\sum_{0\leq j \leq k} \frac j k \binom{m}{j}
\binom{\alpha(\varepsilon)}{k-j} = \widetilde
\tau\left(y_{i_1}^{\varepsilon_1} \dots y_{i_k}^{\varepsilon_k}\right)
\sum_{0\leq j \leq k} \frac j k \binom{m}{j}
\binom{\alpha(\varepsilon)}{k-j}.
\]

Remembering \eqref{eq=decomp_de_norme_2m_comme_somme}, we get that
\[ \|\un + x\|_{L_p(\tau^{(n)})} = \|\un + u^{(n)}(x)\|_{L_p(\widetilde
  \tau^{(n)})}.\]
Since this holds for any $n$ and any $x \in \M n E$, we
have the desired conclusion.
\end{proof}

Now we discuss the case of $p \notin 2\N$. We are unable to determine
whether $n_{p}<\infty$ (or $n_{p,1}<\infty$), but we are able to show that
the assertion $n_{p,1} < \infty$ is related to an assertion concerning the
$*$-distributions of single matricial operators, which we detail below.

If $(x_i)_{i \in I} \in \MM^I$ and $(y_i)_{i \in I} \in \NN^I$ are two
families of operators in von Neumann algebras with \nff{} traces. Then
the same arguments as in Lemma \ref{thm=cas_auto_adjoint} show that
these families have the same $*$-distribution if for any integer $n$,
and any (finitely supported) family $(a_i)_{i \in I} \in \Mn^I$, 
\begin{equation}
\label{eq=egalite_des_etoiles_dist_n}
*-\dist(\sum_{i \in I} a_i \otimes x_i) = *-\dist(\sum_{i\in I} a_i
  \otimes y_i).
\end{equation}

It is also natural to ask: is there an integer $n$ such that
\eqref{eq=egalite_des_etoiles_dist_n} for all $a_i \in \Mn$ imply that
$(x_i)$ and $(y_i)$ have the same $*$-distribution? In the same way as
above, the smallest such integer will be denoted by $N$. (If such integer
does not exist, take $N=\infty$).

Since \eqref{eq=egalite_des_etoiles_dist_n} implies that $\|\un + \sum a_i
\otimes x_i\|_p = \|\sum \un + a_i \otimes y_i\|_p$, Theorem
\ref{thm=equirepartition} shows that when $p$ is not an even integer, $N
\leq n_{p,1}$. To show that $n_{p,1} = \infty$, it would thus be enough to
show $N = \infty$.

\section{Other Applications}
\label{part=resultats_autres}
In this section we prove some other consequences of Lemma
\ref{thm=egalite_entre_normeP_et_trace_avec_adjoint} and Lemma
\ref{thm=egalite_entre_normeP_et_trace_avec_adjoint_nb}. In particular we
prove a \nc{} (weaker) version of Rudin's Theorem
\ref{thm=theoreme_2_rudin}: Theorem \ref{thm=theoreme_2_rudin_nc}. A result
of the same kind (dealing with bounded operators only) and using the same
ideas has already been developed in \cite{MR1978325}. The main difference
is that in \cite{MR1978325}, the author stays at the Banach space level (as
opposed to the operator space level, \ie{}  he does not allow matrix
coefficients in \eqref{eq=hypothese_isometrie_matrices2_2}).

\begin{thm}
\label{thm=theoreme_2_rudin_nc}
Let $(\MM,\tau)$ and $(\NN, \widetilde \tau)$ be as in Theorem
\ref{thm=thm_principal}. Let $0<p<\infty$ and $p \neq 2,4$.  Let $M \subset
L_p(\MM,\tau)$ be a subalgebra (not necessarily self-adjoint) of
$L_p(\MM,\tau)$ containing $\un_\MM$, and let $u:M \rightarrow
L_p(\NN,\widetilde \tau)$ be a linear map such that $u(\un) = \un$.

Assume that $u^{(2)} = \id \otimes u: \M{2}{M} \rightarrow
\M{2}{L_p(\NN,\widetilde \tau)}$ is an isometry for the $p$-``norms'':
\begin{equation}
\label{eq=hypothese_isometrie_matrices2_2}
\forall a \in \M{2}{M} \  \|a\|_p = \|u^{(2)}(a)\|_p.
\end{equation}

Assume moreover that $M \subset L_4(\MM,\tau)$.

Then for all $a,b \in M$
\[u(a b) = u(a) u(b).\]
\end{thm}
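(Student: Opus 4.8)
The plan is to fix $a,b\in M$, set $w\egdef u(ab)-u(a)u(b)$, and prove $w=0$ by showing $\widetilde\tau(w^{*}w)=0$: I expand $\widetilde\tau(w^{*}w)$ into four traces of products of the operators $u(ab),u(a),u(b)$ and their adjoints, and evaluate each of them with the moment formula of Lemma~\ref{thm=egalite_entre_normeP_et_trace_avec_adjoint_nb}. A preliminary point is that $u(M)\subset L_{4}(\NN,\widetilde\tau)$, so that these products lie in $L_{1}$: if $p\ge 4$ this is automatic since $u(M)\subset L_{p}(\NN)\subset L_{4}(\NN)$, while if $p<4$ then $p\notin 2\N$ (as $p\neq 2$), and applying \eqref{eq=hypothese_isometrie_matrices2_2} to the elements $\un+a\otimes x$ with $a\in\M{2}{\C}$ and $x\in M$ (these lie in $\M{2}{M}$ because $\un\in M$) puts us in a position to invoke Theorem~\ref{thm=equivalence_entre_dans_L2n_et_image}, which gives $x\in L_{4}(\MM)\iff u(x)\in L_{4}(\NN)$; as $M\subset L_{4}(\MM)$, we get $u(M)\subset L_{4}(\NN)$. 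In particular $w\in L_{2}(\NN)$, the term $u(a)u(b)$ being a product of two elements of $L_{4}(\NN)$.

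The core step is a ``moment identity'': for every $n\le 4$, every $x_{1},\dots,x_{n}\in M$ with $y_{j}\egdef u(x_{j})$, and every $\varepsilon\in\{1,*\}^{n}$,
\[
\tau\paren{x_{1}^{\varepsilon_{1}}\cdots x_{n}^{\varepsilon_{n}}}\,c_{n}(\alpha,p)=\widetilde\tau\paren{y_{1}^{\varepsilon_{1}}\cdots y_{n}^{\varepsilon_{n}}}\,c_{n}(\alpha,p),\qquad c_{n}(\alpha,p)\egdef\sum_{k=0}^{\alpha}(n-k)\binom{\nicefrac p 2}{n-k}\binom{\alpha}{k},
\]
where $\alpha=\alpha(\varepsilon)$ is the cyclic count of Lemma~\ref{thm=lemme_combinatoire}. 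To obtain it, choose $2\times 2$ matrices $a_{1},\dots,a_{n}$ satisfying \eqref{eq=propriete_combinatoire_des_eij} (possible since $2\ge n/2$, by Remark~\ref{rem=propriete_combinatoire_des_eij}), set $S_{z}=\un+\sum_{j}z_{j}a_{j}^{\varepsilon_{j}}\otimes x_{j}\in\M{2}{M}$, and apply Lemma~\ref{thm=egalite_entre_normeP_et_trace_avec_adjoint_nb} to the family $(x_{j})$ and to the family $(y_{j})$ (admissible because $x_{j}\in M\subset L_{4}(\MM)\subset L_{n}(\MM)$ and $y_{j}\in L_{4}(\NN)\subset L_{n}(\NN)$): the left-hand side of \eqref{eq=egalite_entre_normeP_et_trace_avec_adjoint_non_borne} only involves the numbers $\|S_{rZ}\|_{p}$, and $S_{rZ}\in\M{2}{M}$ gives $\|S_{rZ}\|_{p}=\|u^{(2)}(S_{rZ})\|_{p}$ by \eqref{eq=hypothese_isometrie_matrices2_2}, so the two right-hand sides agree. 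Hence $\tau(\prod_{j}x_{j}^{\varepsilon_{j}})=\widetilde\tau(\prod_{j}y_{j}^{\varepsilon_{j}})$ whenever $c_{n}(\alpha,p)\neq 0$. The only cases needed below are $(n,\alpha)=(2,1),(3,1),(4,1)$, and Lemma~\ref{thm=lemme_coeff_binomial_non_nul} shows $c_{n}(1,p)\neq 0$ for all three: the case $(3,1)$ requires $p\neq 2$ and the case $(4,1)$ requires $p\neq 2,4$, which is exactly the hypothesis.

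Now expand. With $\scal{\xi}{\eta}=\widetilde\tau(\xi^{*}\eta)$,
\[
\widetilde\tau(w^{*}w)=\scal{u(ab)}{u(ab)}-\scal{u(ab)}{u(a)u(b)}-\scal{u(a)u(b)}{u(ab)}+\scal{u(a)u(b)}{u(a)u(b)}.
\]
Each term is the trace of a product of at most four of $u(ab),u(a),u(b)$ and their adjoints, and the cyclic count is $\alpha=1$ in every case (inspect the sign patterns $(*,1)$, $(*,1,1)$, $(*,*,1)$, $(*,*,1,1)$). Applying the moment identity, with the $x_{j}$'s taken to be the appropriate elements $ab,a,b\in M$, and using $(ab)^{*}=b^{*}a^{*}$,
\[
\scal{u(ab)}{u(ab)}=\tau\paren{b^{*}a^{*}ab},\qquad \scal{u(ab)}{u(a)u(b)}=\tau\paren{b^{*}a^{*}ab},
\]
\[
\scal{u(a)u(b)}{u(ab)}=\tau\paren{b^{*}a^{*}ab},\qquad \scal{u(a)u(b)}{u(a)u(b)}=\tau\paren{b^{*}a^{*}ab}.
\]
Therefore $\widetilde\tau(w^{*}w)=0$, and since $\widetilde\tau$ is faithful, $w=0$, i.e. $u(ab)=u(a)u(b)$.

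The main obstacle is this non-vanishing bookkeeping: one must check that the four products occurring in $\widetilde\tau(w^{*}w)$ are exactly ones whose coefficient $c_{n}(\alpha,p)$ is nonzero. It is the four-fold term $\scal{u(a)u(b)}{u(a)u(b)}$, whose coefficient $c_{4}(1,p)$ vanishes precisely at $p=2$ and $p=4$, that is responsible for the restriction $p\neq 4$. (It is also worth noting that working with $u^{(2)}$ rather than $u$ is essential: it is what permits $2\times 2$ matrix coefficients in Lemma~\ref{thm=egalite_entre_normeP_et_trace_avec_adjoint_nb}, hence access to moments of length up to $4$; for a mere isometry, the transposition map on $\M{2}{\C}$ already fails to be multiplicative.)
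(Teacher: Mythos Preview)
Your proof is correct and follows essentially the same route as the paper's: expand $\|u(ab)-u(a)u(b)\|_2^2$ into four trace terms, use Lemma~\ref{thm=egalite_entre_normeP_et_trace_avec_adjoint_nb} with $2\times 2$ matrix coefficients (available since $m\ge n/2$ for $n\le 4$) to match each term with $\tau(b^*a^*ab)$, and conclude by faithfulness. Your treatment is in fact slightly more careful than the paper's in two places: you make explicit the case split $p\ge 4$ versus $p<4$ when invoking Theorem~\ref{thm=equivalence_entre_dans_L2n_et_image} (which formally requires $p\notin 2\N$), and you record precisely which pairs $(n,\alpha)$ are used and why the corresponding coefficients $c_n(\alpha,p)$ are nonzero under the hypothesis $p\neq 2,4$.
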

\begin{proof}
  The proof is based on Lemma
  \ref{thm=egalite_entre_normeP_et_trace_avec_adjoint_nb}. By Theorem
  \ref{thm=equivalence_entre_dans_L2n_et_image}, $u(M) \subset
  L_4(\NN,\widetilde \tau)$.  

If $a,b \in M$, note that
\begin{multline}
\label{eq=norme_de_u-u2}
\|u(a b) - u(a) u(b)\|_2^2= \widetilde \tau(u(b)^* u(a)^* u(a) u(b))
+ \widetilde \tau(u(a b)^* u(a b))\\ - \widetilde \tau(u(b)^* u(a)^*
u(ab)) -\widetilde \tau(u(a b)^* u(a) u(b)).
\end{multline}

Apply Lemma \ref{thm=egalite_entre_normeP_et_trace_avec_adjoint_nb}
with $n=4$, $(\varepsilon_1,\varepsilon_2,\varepsilon_3,\varepsilon_4)
= (*,*,1,1)$ (so that with the notation of Lemma
\ref{thm=egalite_entre_normeP_et_trace_avec_adjoint}, $\alpha=1$), $m
= n/2 = 2$ and with $(x_1,x_2,x_3,x_4) = ( b, a, a, b)$ on the one
hand and $(x_1,x_2,x_3,x_4) = ( u(b), u(a), u(a), u(b))$ on the other
hand. One gets:
\[\tau(b^* a^* a b) \left( 4 \binom{p/2}{4} +
  3\binom{p/2}{3}\right)= \widetilde \tau(u(b)^* u(a)^* u(a) u(b)) \left( 4
 \binom{p/2}{4} + 3\binom{p/2}{3}\right).\]

But $4 \binom{p/2}{4} + 3\binom{p/2}{3} = p^2
(p/2-1)(p/2-2)/24 \ne 0$ if $p \ne 0, 2, 4$. Thus, 
\[\widetilde \tau(u(b)^* u(a)^* u(a) u(b)) = \tau(b^* a^* a b).\]

The same argument yields to 
\[\widetilde \tau(u(a b)^* u(a) u(b)) = \tau((a b)^* a b) = \tau(b^*
a^* a b),\]
\[\widetilde \tau(u(b)^* u(a)^* u(a b)) = \tau(b^*a^* a b),\]
\[\widetilde \tau(u(a b)^* u(a b)) = \tau(b^*a^* a b).\]

Thus, remembering \eqref{eq=norme_de_u-u2}, one gets
\[\|u(a b) - u(a) u(b)\|_2^2 = 0.\]

Since $\widetilde \tau$ is supposed to be faithful, this implies $u(a
b) = u(a) u(b)$.
\end{proof}

For unital isometries defined on self-adjoint subspaces, the situation
is also nice:
\begin{lemma}
\label{thm=isometrie_preserve_adjoint}
Let $1\leq p<\infty$ and $p \neq 2$.  Let $(\MM,\tau)$ and $(\NN, \widetilde
\tau)$ be as in Theorem \ref{thm=thm_principal}. Let $E \subset L_p(\tau)$
be a unital and self-adjoint subspace (\ie{} $x \in E \Rightarrow x^* \in
E$) and $u : E \rightarrow L_p(\widetilde \tau)$ a unital isometric map.

Then for any $x \in E$ such that $\|x\|_2 < \infty$,
$u(x^*) = u(x)^*$.
\end{lemma}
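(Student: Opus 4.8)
The plan is to reduce the statement to a short computation in $L_2(\widetilde\tau)$. Write $y=u(x)$ and $y'=u(x^*)$; since $\widetilde\tau$ is faithful, $\|\cdot\|_2$ is a genuine norm on $L_2(\NN)$, so it suffices to show $\|y'-y^*\|_2=0$, once we know that $y$ and $y'$ lie in $L_2(\NN)$.

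First I would observe that, $E$ being a unital subspace and $u$ unital and linear, one has $\un+zf\in E$ and $u(\un+zf)=\un+z\,u(f)$ for every $f\in E$ and $z\in\C$; hence the isometry of $u$ gives $\|\un+zf\|_p=\|\un+z\,u(f)\|_p$. Applying Theorem~\ref{thm=isometrie_Lp_est_isom_sur_L2} to $f=x$ and to $f=x^*$ (using $\|x^*\|_2=\|x\|_2<\infty$) yields $y,y'\in L_2(\NN)$ with $\|y\|_2=\|y'\|_2=\|x\|_2$. Expanding,
\[
\|y'-y^*\|_2^2=\widetilde\tau(y'^*y')-\widetilde\tau(y'^*y^*)-\widetilde\tau(yy')+\widetilde\tau(yy^*),
\]
the first term equals $\|y'\|_2^2=\|x\|_2^2$, the last equals $\|y^*\|_2^2=\|y\|_2^2=\|x\|_2^2$, and $\widetilde\tau(y'^*y^*)=\overline{\widetilde\tau(yy')}$. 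So the whole proof comes down to showing $\widetilde\tau(yy')=\|x\|_2^2$ (which is then automatically real), since plugging this back gives $\|y'-y^*\|_2^2=0$.

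The key step, and the only one requiring real work, is to compute $\widetilde\tau(yy')$ from the $p$-norms. Here I would apply Lemma~\ref{thm=egalite_entre_normeP_et_trace_avec_adjoint_nb} with $n=2$, $m=1$, $a_1=a_2=\un$ (an admissible choice of coefficients, since for $n=2$ the combinatorial property \eqref{eq=propriete_combinatoire_des_eij} reads $a_1a_2=a_2a_1=\un$), and $(\varepsilon_1,\varepsilon_2)=(1,1)$, so that $\alpha=0$ and $S_z=\un+z_1x_1+z_2x_2$. I would use it twice: once with $(x_1,x_2)=(x,x^*)\in L_2(\MM)\subset L_p(\MM)$, and once with $(x_1,x_2)=(y,y')\in L_2(\NN)\subset L_p(\NN)$. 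Since $\un+z_1x+z_2x^*\in E$ and $u$ maps it to $\un+z_1y+z_2y'$, the isometry of $u$ makes the function $z\mapsto\|S_z\|_p^p$ identical in the two cases; hence the left-hand sides of \eqref{eq=egalite_entre_normeP_et_trace_avec_adjoint_non_borne} agree for every $r$, and so do the limits:
\[
\tau(xx^*)\cdot 2\binom{\nicefrac p 2}{2}=\widetilde\tau(yy')\cdot 2\binom{\nicefrac p 2}{2}.
\]
As $2\binom{\nicefrac p 2}{2}=p(p-2)/4\neq 0$ — this is exactly where $p\neq 2$ (and $p\neq 0$) is used — we obtain $\widetilde\tau(yy')=\tau(xx^*)=\|x^*\|_2^2=\|x\|_2^2$, and therefore $\|y'-y^*\|_2^2=\|x\|_2^2-\|x\|_2^2-\|x\|_2^2+\|x\|_2^2=0$, i.e. $u(x^*)=u(x)^*$.

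I expect no genuine obstacle beyond this computation; the remaining points needing care are purely bookkeeping: checking that all operators entering the traces lie in the appropriate $L_q$-spaces so that Lemma~\ref{thm=egalite_entre_normeP_et_trace_avec_adjoint_nb} applies and the traces make sense (this is where $\|x\|_2<\infty$, via Theorem~\ref{thm=isometrie_Lp_est_isom_sur_L2}, and the Hölder inclusion $L_2\cdot L_2\subset L_1$ enter), and confirming that $a_1=a_2=\un$ is a legal choice of matrix coefficients in the case $n=2$, $m=1$.
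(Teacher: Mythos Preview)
Your proof is correct and follows exactly the approach sketched (but not spelled out) in the paper: use Theorem~\ref{thm=isometrie_Lp_est_isom_sur_L2} to put $u(x),u(x^*)$ in $L_2$, then apply Lemma~\ref{thm=egalite_entre_normeP_et_trace_avec_adjoint_nb} with $n=2$, $m=1$, $\alpha=0$ to identify the cross term $\widetilde\tau(yy')$ and conclude $\|u(x^*)-u(x)^*\|_2=0$. One cosmetic slip: the inclusion ``$L_2(\MM)\subset L_p(\MM)$'' you wrote is only valid for $p\le 2$; what you actually need (and have) is that the operators lie in $L_p\cap L_2$, which suffices for the lemma.
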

\begin{proof} 
The proof is of the same kind of the one above: take $x \in E \cap
L_2(\MM)$, and first apply Theorem \ref{thm=isometrie_Lp_est_isom_sur_L2}
to show that $\|u(x)\|_2, \|u(x^*)\|_2 <\infty$. Then the proof consists in
applying Lemma \ref{thm=egalite_entre_normeP_et_trace_avec_adjoint_nb} in
order to prove that $\|u(x^*) - u(x)^*\|_2^2 = 0$. The details are not
provided.
\end{proof}

When the unital completely isometric map $u$ in Theorem
\ref{thm=thm_principal} is defined on the whole $L_p$ space, we
recover some very special cases of known results by Yeadon
\cite[Theorem 2]{MR611284} for isometries and Junge, Ruan and
Sherman \cite[Theorem 2]{MR2255812} for $2$-isometries:
\begin{thm}
\label{thm=cas_ou_E_est_tout_Lp}
Let $p \in \R^+$, $p \neq 2$. Let $u: L_p(\mathcal M,\tau) \rightarrow
L_p(\mathcal N,\widetilde \tau)$ be a linear map such that $u(\un_{\mathcal
  M}) = \un_{\mathcal N}$.

\begin{itemize}
\item If $p \geq 1$ or $u$ maps self-adjoint operators to self-adjoint
  operators, and if $u$ is isometric, then $u$ maps $\MM$ into $\NN$ and
  preserves the trace, the adjoint and the Jordan product: for any $a,b \in
  \mathcal M$
  \[\widetilde \tau(u(a)) = \tau(a) \textrm{, } u(a^*) = u(a)^* \textrm{
    and } u(a b + b a) = u(a) u(b) + u(b) u(a)\]
\item If $u$ is $2$-isometric (\ie{} $u^{(2)}$ is isometric) and $p
  \neq 2$, then the image of $\mathcal M$ is a von Neumann algebra and the
  restriction of $u$ to $\mathcal M$ is a von Neumann algebra trace
  preserving isomorphism.
\end{itemize}
\end{thm}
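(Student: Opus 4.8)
The plan is to prove the two assertions separately and, in each case, to reduce to the structure results already obtained in the paper, invoking the classical theorems of Yeadon and Junge--Ruan--Sherman only for the finitely many even integers that the present methods do not reach. Throughout one uses that $\tau,\widetilde\tau$ are finite, so that $\MM\subseteq L_2(\MM,\tau)$ and hence Theorem \ref{thm=isometrie_Lp_est_isom_sur_L2}, Lemma \ref{thm=isometrie_preserve_adjoint} and Theorem \ref{thm=equivalence_entre_dans_L2n_et_image} apply to every element of $\MM$; one also uses that, $u$ being linear with $u(\un_\MM)=\un_\NN$, the isometry hypotheses give $\|\un+\sum_i a_i\otimes x_i\|_p=\|\un+\sum_i a_i\otimes u(x_i)\|_p$ for the allowed matrix sizes, whence, once $u$ is known to be $L_2$-isometric and adjoint-preserving on $\MM$, the identity $\widetilde\tau(u(a)^*u(b))=\tau(a^*b)$ on $\MM$ by polarization.

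\emph{First assertion.} I would first get $u(x^*)=u(x)^*$ for every $x\in\MM$: from Lemma \ref{thm=isometrie_preserve_adjoint} when $p\ge1$, and from the standing hypothesis together with complex linearity when $u$ preserves self-adjointness. Fix a self-adjoint $x\in\MM$; then $x$ and $u(x)$ each lie in a commutative von Neumann algebra and $\|\un+zx\|_p=\|\un+zu(x)\|_p$ for all $z\in\C$, so (assuming for now $p\notin2\N$) the commutative Plotkin--Rudin Theorem \ref{thm=theoreme_1_rudin} with $n=1$ shows $x$ and $u(x)$ are equimeasurable. As $x$ is bounded this forces $u(x)\in\NN$ with $\|u(x)\|_\infty=\|x\|_\infty$ and $\widetilde\tau(u(x)^k)=\tau(x^k)$ for all $k$; by linearity $u(\MM)\subseteq\NN$ and $\widetilde\tau\circ u=\tau$. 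It then remains to show $u(x^2)=u(x)^2$ for self-adjoint $x$, since the Jordan identity follows by polarizing $\{x,y\}=(x+y)^2-x^2-y^2$ and extending by complex linearity. Using that $u$ is $L_2$-isometric (Theorem \ref{thm=isometrie_Lp_est_isom_sur_L2}) and adjoint-preserving, and that $u(x^2),u(x)^2$ are self-adjoint, one has $\|u(x^2)-u(x)^2\|_2^2=\widetilde\tau(u(x^2)^2)-2\widetilde\tau(u(x^2)u(x)^2)+\widetilde\tau(u(x)^4)$; the first summand is $\tau(x^4)$ by the $L_2$-isometry identity, the last is $\tau(x^4)$ by the matched moments, and for the middle one I would apply the single-operator equimeasurability to $x+tx^2$ against $u(x)+tu(x^2)$, $t\in\R$: matching the coefficient of $t$ in $\tau((x+tx^2)^3)=\widetilde\tau((u(x)+tu(x^2))^3)$ and using cyclicity of the trace yields $\widetilde\tau(u(x^2)u(x)^2)=\tau(x^4)$. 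Hence the displayed quantity vanishes.

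\emph{Second assertion.} Now $u^{(2)}$ is isometric. Assuming $p\ne2,4$, Theorem \ref{thm=theoreme_2_rudin_nc} applied with $M=\MM$ (a unital subalgebra contained in $L_4(\MM,\tau)$) gives $u(ab)=u(a)u(b)$ for all $a,b\in\MM$; Theorem \ref{thm=equivalence_entre_dans_L2n_et_image} gives $u(\MM)\subseteq\NN$; adjoint-preservation comes from Lemma \ref{thm=isometrie_preserve_adjoint} when $p\ge1$ and, for $0<p<1$, from a direct evaluation of $\|u(x^*)-u(x)^*\|_2^2$ in which every mixed trace is computed through Lemma \ref{thm=egalite_entre_normeP_et_trace_avec_adjoint_nb} with $2\times2$ coefficients and suitable sign patterns (the relevant coefficient $p^2/4$ being nonzero); trace-preservation follows from the $L_2$-isometry identity with $b=\un$. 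Thus $u|_\MM$ is an injective, trace-preserving $*$-homomorphism of the von Neumann algebra $\MM$, hence automatically normal, so $u(\MM)$ is a von Neumann subalgebra of $\NN$ and $u|_\MM$ is a trace-preserving isomorphism onto it.

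\emph{Residual cases and main obstacle.} The argument above settles the first assertion for $p\notin2\N$ and the second for all $p\ne2,4$. The remaining values, $p\in\{4,6,8,\dots\}$ in the first assertion and $p=4$ in the second, are exactly those for which the equidistribution machinery (Theorems \ref{thm=theoreme_1_rudin} and \ref{thm=equirepartition}) and the noncommutative Rudin theorem are not available; there the statement is no more than the unital case of, respectively, Yeadon's theorem \cite[Theorem 2]{MR611284} and the Junge--Ruan--Sherman theorem \cite[Theorem 2]{MR2255812}, and I would simply quote these. I expect the genuine difficulty in the self-contained part to be organizational rather than conceptual: one must arrange the $L_2$-computations so that every trace of a mixed product that appears is pinned down either by the single-variable equimeasurability or by the $L_2$-isometry identity, the identity $\widetilde\tau(u(x^2)u(x)^2)=\tau(x^4)$ being the one that genuinely needs the perturbation-in-$t$ device.
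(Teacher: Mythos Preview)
Your argument is essentially correct, but it takes a different and more circuitous route than the paper's, and the detour forces you to cite Yeadon and Junge--Ruan--Sherman for the even-integer values of $p$, whereas the paper's proof is self-contained for all $p\neq 2$.

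The key difference is in the first assertion. You invoke the equimeasurability Theorem~\ref{thm=theoreme_1_rudin} for a single self-adjoint $x$ versus $u(x)$; this is valid only for $p\notin 2\N$, and it yields matching moments but not directly $u(x^2)=u(x)^2$, so you then need the $t$-perturbation trick on $x+tx^2$ to pin down the cross term $\widetilde\tau(u(x^2)u(x)^2)$. The paper instead applies the first bullet of Rudin's \emph{algebra} Theorem~\ref{thm=theoreme_2_rudin} to the restriction of $u$ to the commutative unital algebra generated by a self-adjoint $a$. That bullet holds for \emph{every} $p\neq 2$, even integers included, and delivers $u(a^2)=u(a)^2$ and $\|u(a)\|_\infty=\|a\|_\infty$ in one stroke; the Jordan identity then follows by polarization and complex linearity with no residual cases, and trace preservation from Lemma~\ref{thm=egalite_entre_normeP_et_trace_avec_adjoint_nb} with $n=1$.

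For the second assertion the paper again avoids Theorem~\ref{thm=theoreme_2_rudin_nc} (and hence the exclusion of $p=4$): it simply applies the first assertion to $u^{(2)}$, obtaining that $u^{(2)}$ is a trace-preserving Jordan $*$-map on $M_2(\MM)$, and then reads off genuine multiplicativity $u(ab)=u(a)u(b)$ from the Jordan identity for the off-diagonal matrices
\[
\widetilde a=\begin{pmatrix}0&a\\0&0\end{pmatrix},\qquad \widetilde b=\begin{pmatrix}0&0\\b&0\end{pmatrix},
\]
since $\widetilde a\widetilde b$ and $\widetilde b\widetilde a$ sit in disjoint diagonal blocks. This covers $p=4$ without external input. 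Incidentally, your appeal to Theorem~\ref{thm=equivalence_entre_dans_L2n_et_image} for $u(\MM)\subseteq\NN$ in the second assertion also requires $p\notin 2\N$, so as written your residual set there is all of $2\N\setminus\{2\}$, not just $\{4\}$; you would need to route through the first assertion (via your Yeadon citation) to recover boundedness for $p=6,8,\dots$.
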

\begin{proof}
We start by the first point. Take $u$ as above. Note that by Lemma
\ref{thm=isometrie_preserve_adjoint}, if $p \geq 1$ then $u$ preserves
the adjoint.

For any $a \in \MM$ such that $a^* = a$, apply the commutative Theorem
\ref{thm=theoreme_2_rudin} to the unital isometric map $u$ from the
commutative unital subalgebra of $L_p(\MM)$ generated by $a$ into the
commutative unital subalgebra of $L_p(\NN)$ generated by $u(a)$. One
gets that $\|u(a)\|_\infty<\infty$ and that $u(a^2) = u(a)^2$ for any
self-adjoint $a \in M$. By polarization, this implies that for any
self-adjoint $a,b \in M$,
\[ u(a b + ba) = u(a) u(b) + u(b) u(a).\] By linearity this equality
extends to any $a,b \in M$, and $\|a\|_\infty<\infty$. The fact that $u$
preserves the trace is an application of Lemma
\ref{thm=egalite_entre_normeP_et_trace_avec_adjoint_nb} with $n=1$.

Assume now that $u$ is $2$-isometric (with $0< p \neq 2 < \infty$). By
Theorem \ref{thm=equivalence_entre_dans_L2n_et_image}, $u$ (and hence
$u^{(2)}$) preserves the adjoint map. We can apply the isometric case above
for $u^{(2)}$. Thus $u^{(2)}$ is a trace preserving Jordan map. If $a, b
\in \MM$, the equation $u^{(2)}( \widetilde a \widetilde b + \widetilde b
\widetilde a) = u^{(2)}( \widetilde a) u^{(2)}(\widetilde b) +
u^{(2)}(\widetilde b) u^{(2)}(\widetilde a)$ for
\[ \widetilde a = \begin{pmatrix}
0 & a\\0 & 0 \end{pmatrix} \textrm{ and } \widetilde b =
\begin{pmatrix} 0 & 0\\b & 0 \end{pmatrix}\] 

yields to $u(a b) = u(a) u(b)$
(and $u(b a) = u(b) u(a)$). Thus $u$ is a $*$-isomorphism from the von
Neumann algebra $\MM$ onto its image, and it preserves the trace. This
implies, by Lemma \ref{thm=lemme_equidistribution_implique_isomorphisme},
that $u(\MM)$ is a von Neumann subalgebra of $\NN$ and that $u$ is a von
Neumann isomorphism.
\end{proof}

\subsection{Applications to \nc{} $H^p$ spaces}
The main result also applies in the setting of \nc{} $H^p$ spaces
(see \cite{Blecher_Labu}).

\begin{dfn} 
  Let $(\MM, \tau)$ be, as above, a von Neumann algebra with a faithful
  normal normalized trace.  A tracial subalgebra $A$ of $(\mathcal M,
  \tau)$ is a weak-$\ast$ closed unital subalgebra such that the
  conditional expectation $\phi_A:\mathcal M \rightarrow A \cap A^*$
  satisfies $\phi_A(a b) = \phi_A(a) \phi_A(b)$ for any $a, b \in A$.

  A \nc{} $H^p$ space is the closure, denoted $[A]_p$, of a
  tracial subalgebra $A$ in $L_p(\mathcal M,\tau)$.
\end{dfn}

Since by definition, a \nc{} $H^p$ space is a unital subspace of
$L_p(\MM,\tau)$ in which the subset of bounded operators is dense, Theorem
\ref{thm=thm_principal} automatically implies the following, which gives a
beginning of answer to a question raised in \cite{Blecher_Labu} and
\cite{MR1978325}:

\begin{thm}
Let $p \notin 2\N$.

A unital complete isometry between \nc{} $H^p$ spaces
extends to an isomorphism between the von Neumann algebras they
generate.

Moreover, if a non commutative $H^p$ space is unitally completely isometric
to a unital subspace $E$ of a \nc{} $L_p$ space, then $E$ is a
\nc{} $H^p$ space.
\end{thm}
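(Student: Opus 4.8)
The plan is to derive both statements directly from Theorem \ref{thm=thm_principal}, used in the relaxed form indicated just after its statement, where the hypothesis $E\subset L_\infty(\MM)$ is replaced by the requirement that $E\cap L_\infty(\MM)$ be dense in $E$. The first observation I would record is that this density is automatic for a \nc{} $H^p$ space: by definition $[A]_p$ is the $L_p$-closure of a tracial subalgebra $A\subset\MM$, so $A\subset[A]_p\cap L_\infty(\MM)$ is $L_p$-dense in $[A]_p$, and $\un\in[A]_p$ because $A$ is unital. Consequently, whenever $u$ is a unital complete isometry defined on $[A]_p$, one has $u(\un)=\un$ and $\id\otimes u$ preserves all the matricial $p$-norms, which together with $\un\in[A]_p$ gives the equality \eqref{eq=hypothese_p_c.isometrie} for every $n$ and every $X\in\M{n}{[A]_p}$; the relaxed Theorem \ref{thm=thm_principal} then provides a unique trace-preserving von Neumann algebra isomorphism $\pi\colon VN([A]_p)\to VN(u([A]_p))$ extending $u$, so in particular $\pi|_A=u|_A$ and $\pi(A)\subset\NN$ is bounded. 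Applied to a unital complete isometry $u\colon[A]_p\to[B]_p$ onto another \nc{} $H^p$ space, this is exactly the first assertion.

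For the second assertion, I would start from a unital complete isometry $u\colon[A]_p\to E\subset L_p(\NN,\widetilde\tau)$ with image the given unital subspace $E$, together with the associated extension $\pi\colon VN([A]_p)\to VN(E)$. Setting $B=\pi(A)=u(A)$, the first routine checks are that $B$ is a weak-$*$ closed unital subalgebra of $\NN$ --- it is a unital subalgebra since $\pi$ is an algebra homomorphism and $A$ is one, and it is weak-$*$ closed because $*$-isomorphisms of von Neumann algebras are automatically normal, so $\pi$ carries the weak-$*$ closed set $A$ onto a weak-$*$ closed subset of $VN(E)$, which is in turn weak-$*$ closed in $\NN$ --- and that $E=[B]_p$, since $u$ is an $L_p$-isometry of $[A]_p$ onto $E$ and $A$ is $L_p$-dense in $[A]_p$, so $B=u(A)$ is $L_p$-dense in $E$.

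The remaining, and only genuinely non-formal, step is to verify that $B$ is a \emph{tracial} subalgebra of $(\NN,\widetilde\tau)$, \ie{} that the trace-preserving conditional expectation $\phi_B\colon\NN\to B\cap B^*$ is multiplicative on $B$. Here I would use the tower property of trace-preserving conditional expectations: since $B\cap B^*\subset VN(E)\subset\NN$, the restriction $\phi_B|_{VN(E)}$ is the conditional expectation of $VN(E)$ onto $B\cap B^*$, and likewise $\phi_A|_{VN([A]_p)}$ is the conditional expectation of $VN([A]_p)$ onto $A\cap A^*$; because $\pi$ is a trace-preserving $*$-isomorphism taking $A$ onto $B$ and hence $A\cap A^*$ onto $B\cap B^*$, it intertwines these two conditional expectations. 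Multiplicativity of $\phi_A$ on $A$ then transfers to multiplicativity of $\phi_B$ on $B$, so $B$ is a tracial subalgebra and $E=[B]_p$ is a \nc{} $H^p$ space. The main point to be careful about throughout is precisely this compatibility of the several conditional expectations (onto $A\cap A^*$ and onto $B\cap B^*$, viewed inside $\MM$, inside $VN([A]_p)$, inside $VN(E)$ and inside $\NN$) with the isomorphism $\pi$; everything else is an immediate application of Theorem \ref{thm=thm_principal} and standard von Neumann algebra facts.
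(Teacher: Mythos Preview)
Your proposal is correct and follows exactly the route the paper intends: the paper's own proof is the single sentence that a \nc{} $H^p$ space is a unital subspace with dense bounded operators, so Theorem \ref{thm=thm_principal} (in its relaxed form) ``automatically implies'' the result. You have supplied the details the paper omits --- in particular, for the second assertion, the verification that $B=\pi(A)$ is weak-$*$ closed, that $E=[B]_p$, and that $\phi_B$ is multiplicative on $B$ via the intertwining of conditional expectations through $\pi$ --- all of which are the natural checks one would expect and none of which requires anything beyond Theorem \ref{thm=thm_principal} and standard facts.
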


For a $2$-isometric map between \nc{} $H_p$-spaces, we also get
something:
\begin{thm}
Let $p \in \R$, $p\neq 2, 4$.

Let $u$ be a $2$-isometric and unital map from a \nc{} $H^p$ space
$[A]_p$ into a \nc{} $L_p$ space $L_p(\NN, \widetilde \tau)$.

Then the image of $A$ is a subalgebra $B$ of $\NN$ such that for any
$a,b \in A$,
\[ u(a b) = u(a) u(b). \]

Moreover, $B \cap B^* = u(A \cap A^*)$ is a von Neumann algebra such
that the restriction to $B$ of the conditional expectation $\Phi_B :
\NN \rightarrow B \cap B^*$ satisfies, for any $a \in M$
\[ \Phi_B(u(a)) = u(\Phi_A(a)).\]
\end{thm}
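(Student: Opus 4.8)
The plan is to bootstrap from the non-self-adjoint Rudin-type theorem \ref{thm=theoreme_2_rudin_nc} and from the second-moment information in Lemma \ref{thm=egalite_entre_normeP_et_trace_avec_adjoint_nb}. Throughout write $B=u(A)$, write $\Phi_A$ for the trace-preserving conditional expectation $\MM\to A\cap A^*$ (which by hypothesis is multiplicative on $A$), and note that $u$, being $2$-isometric, is in particular isometric and $u(\un)=\un$. First I would obtain the algebraic structure on $A$. Since $A$ is a unital subalgebra of $L_p(\MM,\tau)$ contained in $\MM\subseteq L_4(\MM,\tau)$, and $u^{(2)}$ restricted to $\M{2}{A}\subseteq\M{2}{[A]_p}$ is isometric, Theorem \ref{thm=theoreme_2_rudin_nc} applies with $M=A$ and gives $u(ab)=u(a)u(b)$ for all $a,b\in A$; and exactly as in the proof of that theorem, Theorem \ref{thm=equivalence_entre_dans_L2n_et_image} (applied with $n=\infty$ to each $a\in A$ and $y=u(a)$, using $u^{(2)}(\un+\alpha\otimes a)=\un+\alpha\otimes u(a)$ for $\alpha\in\M{2}{\C}$) gives $u(a)\in L_\infty(\NN)=\NN$. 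Hence $B=u(A)$ is a (not necessarily self-adjoint) subalgebra of $\NN$.

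Next I would record the key identity: for all $a,b\in A$, $\widetilde\tau(u(a)^*u(b))=\tau(a^*b)$. This follows from Lemma \ref{thm=egalite_entre_normeP_et_trace_avec_adjoint_nb} with $n=2$, $m=1$, $a_1=a_2=1$ and $(\varepsilon_1,\varepsilon_2)=(*,1)$ — so $\alpha=1$ and the constant $2\binom{p/2}{2}+\binom{p/2}{1}=p^2/4$ is nonzero — applied simultaneously to $(x_1,x_2)=(a,b)$ on the $\MM$ side and to $(x_1,x_2)=(u(a),u(b))$ on the $\NN$ side, the elements $S_z=\un+z_1a+z_2b$ and $\un+z_1u(a)+z_2u(b)$ having equal $p$-norms because $u$ is linear, unital and isometric. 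Taking $b=\un$ gives $\widetilde\tau(u(a))=\tau(a)$ for $a\in A$. Then for $x\in A\cap A^*$ (so $x,x^*\in A$) I would expand
\[\|u(x^*)-u(x)^*\|_2^2=\widetilde\tau(u(x^*)^*u(x^*))-\widetilde\tau(u(x^*)^*u(x)^*)-\widetilde\tau(u(x)u(x^*))+\widetilde\tau(u(x)u(x)^*),\]
and evaluate: the first term is $\tau(xx^*)$ by the key identity, the last is $\widetilde\tau(u(x)^*u(x))=\tau(x^*x)=\tau(xx^*)$ by the trace property, the third is $\widetilde\tau(u(xx^*))=\tau(xx^*)$ by multiplicativity and trace preservation, and the second is $\overline{\widetilde\tau(u(x)u(x^*))}=\tau(xx^*)$; the sum is $0$, so $u(x^*)=u(x)^*$ by faithfulness of $\widetilde\tau$. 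Thus $u|_{A\cap A^*}$ is a unital, trace-preserving $*$-homomorphism; it is injective ($\widetilde\tau$ faithful) and normal (for an increasing net $x_\lambda\nearrow x$ in $(A\cap A^*)_+$ the increasing net $u(x_\lambda)$ is dominated by $u(x)$ and its weak-$*$ limit $y$ satisfies $\widetilde\tau(u(x)-y)=\lim(\tau(x)-\tau(x_\lambda))=0$, so $y=u(x)$). Consequently $\mathcal Q:=u(A\cap A^*)$ is a von Neumann subalgebra of $\NN$ and $u|_{A\cap A^*}\colon A\cap A^*\to\mathcal Q$ is a trace-preserving $*$-isomorphism.

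For the conditional expectation statement, let $\Phi_B\colon\NN\to\mathcal Q$ be the trace-preserving normal conditional expectation. Given $a\in A$, both $\Phi_B(u(a))$ and $u(\Phi_A(a))$ lie in $\mathcal Q$, so it suffices that they pair identically with every $q=u(c')$, $c'\in A\cap A^*$. Using the bimodule property of $\Phi_B$, multiplicativity of $u$ on $A$, $*$-preservation of $u$ on $A\cap A^*$, and trace preservation, one gets $\widetilde\tau(q^*\Phi_B(u(a)))=\widetilde\tau(q^*u(a))=\widetilde\tau(u((c')^*a))=\tau((c')^*a)$ and $\widetilde\tau(q^*u(\Phi_A(a)))=\widetilde\tau(u((c')^*\Phi_A(a)))=\tau((c')^*\Phi_A(a))$, and these agree since $\Phi_A((c')^*a)=(c')^*\Phi_A(a)$ and $\tau\circ\Phi_A=\tau$; hence $\Phi_B(u(a))=u(\Phi_A(a))$. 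Now $\mathcal Q\subseteq B\cap B^*$ since $\mathcal Q$ is $*$-closed and contained in $B$. For the reverse inclusion take $y\in B\cap B^*$, write $y=u(a)$, $y^*=u(c)$ with $a,c\in A$, and split $a=\Phi_A(a)+a_0$, $c=\Phi_A(c)+c_0$ with $a_0,c_0$ in the analytic part $A_0:=A\cap\ker\Phi_A$. By the identity just proved, $y-\Phi_B(y)=u(a_0)$ and $y^*-\Phi_B(y^*)=u(c_0)$; since $\Phi_B$ is $*$-preserving, $u(c_0)=y^*-\Phi_B(y)^*=(y-\Phi_B(y))^*=u(a_0)^*$, whence
\[\|a_0\|_2^2=\widetilde\tau(u(a_0)^*u(a_0))=\widetilde\tau(u(c_0)u(a_0))=\widetilde\tau(u(c_0a_0))=\tau(c_0a_0)=\tau(\Phi_A(c_0)\Phi_A(a_0))=0,\]
using the key identity, multiplicativity and trace preservation of $u$, $\tau\circ\Phi_A=\tau$, and crucially the tracial-subalgebra identity $\Phi_A(c_0a_0)=\Phi_A(c_0)\Phi_A(a_0)$. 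Hence $a_0=0$ and $y=u(\Phi_A(a))\in\mathcal Q$, so $B\cap B^*=\mathcal Q=u(A\cap A^*)$ and $\Phi_B$ is the asserted conditional expectation $\NN\to B\cap B^*$.

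The main obstacle I anticipate is the bookkeeping forced by the non-self-adjointness of $A$: multiplicativity of $u$ may only be invoked on products that remain inside $A$ (or inside $A\cap A^*$), so every trace of a ``mixed'' word — in particular the second moments $\widetilde\tau(u(a)^*u(b))$ — must be reached only through Lemma \ref{thm=egalite_entre_normeP_et_trace_avec_adjoint_nb}; and the inclusion $B\cap B^*\subseteq u(A\cap A^*)$ genuinely uses the tracial-subalgebra axiom (multiplicativity of $\Phi_A$ on $A$), not merely that $A\cap A^*$ is a von Neumann algebra.
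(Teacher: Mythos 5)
Your proof is correct, and its skeleton is the paper's: boundedness of $u(A)$ via Theorem \ref{thm=equivalence_entre_dans_L2n_et_image}, multiplicativity via Theorem \ref{thm=theoreme_2_rudin_nc}, the second-moment identity $\widetilde\tau(u(a)^*u(b))=\tau(a^*b)$ and trace preservation via Lemma \ref{thm=egalite_entre_normeP_et_trace_avec_adjoint_nb}, and the $L_2$-pairing argument for $\Phi_B\circ u=u\circ\Phi_A$. Where you genuinely differ is in the treatment of $B\cap B^*$: the paper cites Lemma \ref{thm=isometrie_preserve_adjoint} (adjoint preservation on $A\cap A^*$) and Theorem \ref{thm=cas_ou_E_est_tout_Lp} (von Neumann algebra image) and declares $B\cap B^*=u(A\cap A^*)$ ``immediate'', which at face value only yields $u(A\cap A^*)\subseteq B\cap B^*$; you instead re-derive adjoint preservation and normality by hand and, more substantively, give an actual proof of the reverse inclusion, using $\Phi_B(u(c))=u(\Phi_A(c))$, the relation $u(c_0)=u(a_0)^*$, and the multiplicativity of $\Phi_A$ on $A$ (the tracial-subalgebra axiom) to force $a_0=0$. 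That completion is a real gain over the paper's terse proof, and your direct computation of $u(x^*)=u(x)^*$ also bypasses Theorem \ref{thm=isometrie_Lp_est_isom_sur_L2}, so it covers $0<p<1$, which a literal citation of Lemma \ref{thm=isometrie_preserve_adjoint} (stated for $p\geq 1$) does not. One caveat you share with the paper rather than introduce: the boundedness step invokes Theorem \ref{thm=equivalence_entre_dans_L2n_et_image}, which is stated for $p\notin 2\N$, while the theorem as stated also admits the even values $p=6,8,\dots$; this limitation is inherited from the paper's own argument, not a defect of yours.
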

\begin{proof}
The fact that $B \egdef u(A)$ is contained in $\NN$ follows from
Theorem \ref{thm=equivalence_entre_dans_L2n_et_image}.

Theorem \ref{thm=theoreme_2_rudin_nc} then implies that $B$ is an
algebra and that $u(a b) = u(a) u(b)$.

The fact that $B \cap B^* = u(A \cap A^*)$ is immediate from Lemma
\ref{thm=isometrie_preserve_adjoint}, and Theorem
\ref{thm=cas_ou_E_est_tout_Lp} shows that $u(A \cap A^*)$ is a von
Neumann algebra. 

Recall that the conditional expectation $\Phi_B : \NN \rightarrow B
\cap B^*$ coincides with the orthogonal projection for $L_2(\NN)$ to
$L_2(B \cap B^*)$. To check the last equation
\[ \Phi_B(u(a)) = u(\Phi_A(a))\]
we thus have to show that for any $x \in B \cap B^*$,
\[ \widetilde \tau\left(x u(a)\right) = \widetilde \tau\left(x u(\Phi_A(a))\right).\]

Write $x = u(b)$ for $b \in \MM$. The above equation arises from the
definition of $\Phi_A(a)$, from the multiplicativity of $u$ and from
the fact that, by Lemma
\ref{thm=egalite_entre_normeP_et_trace_avec_adjoint_nb}, $\widetilde
\tau \circ u = \tau$:
\begin{eqnarray*}
\widetilde \tau\left(x u(a)\right) &=& \widetilde \tau\left(u(b
a)\right) \\
& = & \tau\left(b a\right)\\
& = & \tau\left(b \Phi_A(a)\right)\\
& = & \widetilde \tau\left(u(b^* \Phi_A(a))\right)\\
& = & \widetilde \tau\left(x^* u(\Phi_A(a))\right).
\end{eqnarray*}
This concludes the proof.
\end{proof}

\bigskip

We end this paper with some additional remarks and questions related
Yeadon's result. The main theorem of \cite{MR611284} in
particular contains the following:
\begin{lemma}
\label{thm=image_de_un_commute_avec_le_reste}
Let $u$ be an isometry from an $L_p$-space $L_p(\MM)$ constructed on a von
Neumann algebra $(\MM,\tau)$ with a \nff{} trace to an $L_p$-space
$L_p(\NN)$ constructed on a von Neumann algebra $(\NN,\sigma)$ with a
normal \emph{semifinite} faithful trace. 

Then if $u(\un) = b$ is positive, then $b$ commutes with $u(L_p(\MM))$
and has full support.
\end{lemma}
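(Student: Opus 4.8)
The plan is to follow Yeadon's argument \cite{MR611284}; since the statement fails at $p=2$ we assume $p\neq2$, and I describe the proof in detail for $1<p<\infty$, the cases $0<p\leq1$ being similar with the $p$-homogeneous quasinorm in place of the duality (see \cite{MR611284}). Write $b=u(\un)$; since $\tau(\un)=1$ we have $\|b\|_p=\|\un\|_p=1$, so $b$ is a genuine positive element of $L_p(\NN,\sigma)$ with a support projection $s(b)\in\NN$.

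The first step will be to use smoothness and uniform convexity of $L_p(\NN,\sigma)$: for $1<p<\infty$ every nonzero $a$ has a unique norming functional of the correct magnitude, namely its duality image $J_p(a)\in L_{p'}(\NN)$ ($p'$ the conjugate exponent), with $\langle a,J_p(a)\rangle=\|a\|_p^p$, $\|J_p(a)\|_{p'}=\|a\|_p^{p-1}$, and $J_p(a)=|a|^{p-2}a$ when $a$ is self-adjoint. For self-adjoint $x\in L_p(\MM,\tau)$, the functional $u(z)\mapsto\langle z,J_p(x)\rangle$ on $u(L_p(\MM))$ norms $u(x)$ with the right magnitude; extending it to $L_p(\NN)^*=L_{p'}(\NN)$ by Hahn--Banach without increasing the norm and invoking uniqueness, the extension must equal $J_p(u(x))$, so restricting back to the range gives
\[\langle u(y),J_p(u(x))\rangle=\langle y,J_p(x)\rangle\qquad\text{for all self-adjoint }x,y\in L_p(\MM,\tau).\]
Specialising to $x=\un$ (where $J_p(\un)=\un$ because $\tau(\un)=1$, and $J_p(b)=b^{p-1}$ on $s(b)$) yields the change-of-density identity $\sigma(b^{p-1}u(y))=\tau(y)$ for all $y$.

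Next I would extract the commutation by differentiating the displayed identity in $x$ at $\un$. Fix a bounded self-adjoint $x\in\MM$ and apply it with $\un+tx$ for small real $t$: since $\un$ and $x$ commute, the right-hand side is $\tau\big(y(\un+tx)^{p-1}\big)=\sum_{k\geq0}\binom{p-1}{k}t^k\tau(yx^k)$, whose Taylor coefficients involve only the scalar moments $\tau(yx^k)$. On the left, $t\mapsto J_p(b+tu(x))=f(b+tu(x))$ with $f(s)=|s|^{p-2}s$ has Taylor coefficients given by the (multiple) operator integrals $D^jf(b)[u(x),\dots,u(x)]$ against the spectral measure of $b$, and the off-diagonal blocks $e\,u(x)\,e'$ (for disjoint spectral projections $e,e'$ of $b$) occurring there survive precisely to the extent that $u(x)$ fails to commute with $b$. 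Comparing the two expansions tested against all elements of the range forces that off-diagonal part to vanish, i.e.\ $b\,u(x)=u(x)\,b$, and by linearity and density this holds for all $x\in L_p(\MM)$. For the full-support statement: each $u(x)$ then commutes with $s(b)$, so $u(x)=s(b)u(x)s(b)+q\,u(x)\,q$ with $q=\un_\NN-s(b)$, and comparing the $\alpha\to+\infty$ asymptotics of $\|\alpha\un+x\|_p^p=\|\alpha b+s(b)u(x)s(b)\|_p^p+\|q\,u(x)\,q\|_p^p$ — the two ``$s(b)$-pieces'' agreeing to the needed order by the change-of-density identity — forces $\|q\,u(x)\,q\|_p=0$; hence the range lies in $s(b)\NN s(b)$, inside which $b$ has full support.

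The hard part is the commutation step: turning the equality of expansions into $[b,u(x)]=0$ is the rigidity of the noncommutative Clarkson inequalities — equality between $\|a+c\|_p^p+\|a-c\|_p^p$ and $2\|a\|_p^p+2\|c\|_p^p$ forces orthogonal left supports and orthogonal right supports for $a$ and $c$ — which is precisely where $p\neq2$ enters and is the technical core of \cite{MR611284}; the double operator integral discussion above is an equivalent, more computational formulation of the same point. Once this rigidity is available the rest is formal.
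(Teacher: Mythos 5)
The paper does not prove this lemma at all: it is quoted as being contained in the main theorem of Yeadon \cite{MR611284}, and the paper explicitly records, in the closing remarks, that Yeadon's proof rests on the equality case of Clarkson's inequality applied to (images of) projections with disjoint supports. Your proposal attempts a direct proof, and its first step is fine as far as it goes: for $1<p<\infty$ the smoothness/Hahn--Banach argument with the duality map $J_p$ is a standard way to obtain the change-of-density identity $\sigma(b^{p-1}u(y))=\tau(y)$ (though it already breaks down at $p=1$, which the lemma covers, and the ``$0<p\leq 1$ is similar'' remark is not substantiated). The genuine gap is the commutation step, which is the entire content of the lemma. Differentiating $\langle u(y),J_p(u(x))\rangle=\langle y,J_p(x)\rangle$ at $x=\un$ only yields scalar identities in which the (multiple operator integral) derivatives of $J_p$ at $b$ are paired against elements of the range $u(L_p(\MM))$; since the range is merely a closed subspace of $L_p(\NN)$, you cannot ``test against all elements'' and conclude that the parts $e\,u(x)\,e'$ of $u(x)$ between disjoint spectral projections of $b$ vanish --- the equalities constrain certain traces, not the operators themselves. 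You acknowledge this by calling the step ``the technical core of \cite{MR611284}'' and asserting that your expansion is ``an equivalent, more computational formulation'' of the Clarkson rigidity; that equivalence is precisely what would have to be proven and is not. In Yeadon's argument the Clarkson equality case is applied to $u(e)$ and $u(\un-e)$ for projections $e\in\MM$, and the commutation of $b$ with the range only emerges after the full structure theorem ($u=w\,b\,J(\cdot)$ with $J$ a Jordan $*$-morphism) is built from those orthogonality relations; none of that is reproduced, so the key claim $[b,u(x)]=0$ is in effect a citation of the result being proven rather than a proof.

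The full-support step is also not established as written. Granting commutation, the identity $\|\alpha\un+x\|_p^p=\|\alpha b+s\,u(x)\,s\|_p^p+\|q\,u(x)\,q\|_p^p$ would force $\|q\,u(x)\,q\|_p=0$ only if the first two terms agree up to $o(1)$ as $\alpha\to\infty$; elementary bounds give agreement only up to $O(\alpha^{p-1})$, and matching the next coefficient via the change-of-density identity still leaves an error of order lower than $\alpha^{p-1}$ but not $o(1)$ when $p>1$. So this step, too, needs the finer structural information that Yeadon's theorem supplies. In short: where the paper simply invokes Yeadon's theorem, your direct route is incomplete exactly at the two assertions the lemma makes.
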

\begin{rem}
This also holds if $\NN$ does not carry a semifinite
trace and $L_p(\NN)$ is Haagerup's generalized $L_p$ space (see
\cite[Theorem 3.1]{MR2255812}).
\end{rem}

This fact allows to reduce the general case to the unital case. Here
are the details: if one denotes by $b = u(\un) \geq 0 \in L_p(\mathcal
M)$, by $s \in \NN$ the support projection of $b$, and by $\widetilde
\NN$ the von Neumann subalgebra of $s \NN s$ generated by $b^{-1}
u(\MM)$, then $\widetilde \NN$ carries a \nff{} trace given by
\[\widetilde \tau(x) =  \left \{ \begin{array}{ll}
\sigma( b^p x) & \textrm{if $\sigma$ was a semifinite trace on $\NN$}\\
tr(b^p x). & \textrm{in Haagerup's construction.}
\end{array}\right.\]
(in Haagerup's construction, $tr$ is the trace functional on $L_1(\NN)$).

The assumption that $u$ is an isometry exactly means that (the
unital linear map) $b^{-1} \cdot u$ is an isometry from $L_p(\MM,\tau)$
to $L_p(\widetilde \NN,\widetilde \tau)$. 

Thus Yeadon's Lemma \ref{thm=image_de_un_commute_avec_le_reste} (resp. with
the preceding remark) and Theorem \ref{thm=cas_ou_E_est_tout_Lp} of this
paper are enough to recover Yeadon's result (resp. Junge, Ruan and
Sherman's result with the restriction that the first $L_p$-space be
semifinite). Of course, all this is not surprising at all since Lemma
\ref{thm=image_de_un_commute_avec_le_reste} contains most of the results
from \cite{MR611284}, and we therefore do not provide more details. But
this leads naturally to the question: to what extend Lemma
\ref{thm=image_de_un_commute_avec_le_reste} can be generalized when $u$ is
only defined on a unital subspace of $L_p(\MM,\tau)$?

As justified above, it is natural to wonder whether the same result
holds for isometries between subspaces of \nc{} $L_p$
spaces. More precisely, let $\un \in E \in L_p(\MM,\tau)$ be a unital
subspace of a \nc{} $L_p$ space with $\tau$ a \nff{}
trace. Let $u: E \rightarrow L_p(\NN)$ be an isometry between $E$ and
a subspace of an arbitrary \nc{} $L_p$ space such that
$u(\un) \geq 0$. Then is it true that $u(\un)$ commutes with $u(E)$
and has full support in $u(E)$? (that is: if $s$ is the support
projection of $u(\un)$, then $s u(x) = u(x) s = u(x)$ for any $x \in
E$). As noted above, this would allow to use all the results of this
paper for $u$ and would have several interesting consequences.

It should be noted that Yeadon's proof (as well as the generalization
in \cite{MR2255812}) consists in applying the equality condition in
Clarkson's inequality for projections with disjoint supports. This of
course is not possible for a general subspace of $L_p(\mathcal M)$
since it may not contain any nontrivial projection.

\bibliographystyle{plain} \bibliography{biblio}
\end{document}